    \newcommand\email[1]{\_email #1\q_nil}
    \def\_email#1@#2\q_nil{%
      \href{mailto:#1@#2}{{\emailfont #1\emailampersat #2}}
    }
    \newcommand\emailfont{\sffamily}
    \newcommand\emailampersat{{\color{red}\small@}}
\theoremstyle{plain}
\newtheorem{thm}{Theorem}
\newtheorem{lemma}[thm]{Lemma}
\newtheorem{cor}[thm]{Corollary}
\newtheorem{remark}[thm]{Remark}
\theoremstyle{definition}
\newtheorem{definition}[thm]{Definition}
\newtheorem{example}[thm]{Example}
\newcommand{\superimpose}[2]{%
  {\ooalign{$#1\@firstoftwo#2$\cr\hfil$#1\@secondoftwo#2$\hfil\cr}}}
\newcommand{\ltensor}{\mathpalette\superimpose{{\bigcirc}{\ltimes}}}             
\newcommand{\rtensor}{\mathpalette\superimpose{{ \bigcirc}{\rtimes}}}
\newcommand{\be}{\begin{equation}} 
\newcommand{\ee}{\end{equation}}
\newcommand{\Rp}{\mathbb{R}^{^p}}
\newcommand{\Rn}{\mathbb{R}^{^n}}
\newcommand{\Rv}[1]{\mathbb{R}^{^#1}}
\newcommand{\ch}{\mathbb{1}}
\newcommand{\A}{\mathcal{A}}
\newcommand{\C}{\mathcal{C}}
\newcommand{\F}{\mathcal{F}}
\newcommand{\Set}{\mathcal{S}et}
\newcommand{\X}{\textbf{X}}
\newcommand{\Y}{\textbf{Y}}
\newcommand{\G}{\mathcal{G}}
\newcommand{\GP}{\mathcal{G}\mathcal{P}}
\newcommand{\prob}{\mathcal{P}}
\newcommand{\sa}{\Sigma}
\newcommand{\mO}[1]{(#1,\Sigma_{#1})}
\newcommand{\M}{\mathcal{M}eas}
\newcommand{\px}{\mathscr{P}}
\newcommand{\xv}{\mathbf{x}}
\newcommand{\uv}{\mathbf{u}}                
\newcommand{\vv}{\mathbf{v}}
\newcommand{\mv}{\mathbf{m}}
\newcommand{\kv}{\mathbf{k}}
\newcommand{\wv}{\mathbf{w}}
\newcommand{\yv}{\mathbf{y}}
\newcommand{\zv}{\mathbf{z}}
\newcommand{\av}{\mathbf{a}}
\newcommand{\e}{\mathbf{e}}
\newcommand{\U}{\mathcal{U}}
\newcommand{\E}{\mathbb{E}}                
\newcommand{\NN}{\mathcal{N}}            
\newcommand{\fn}{\ulcorner f \urcorner}    
\newcommand{\p}[1]{P_{#1}}
\newcommand{\mcS}{\mathcal{S}}
\newcommand{\mcI}{\mathcal{I}}
\newcommand{\mcSS}{\mathcal{S}^{\mathbf{x}}}      
\newcommand{\mcII}{\mathcal{I}^{\mathbf{x}}}           
\def\@normalsize{\@setsize\normalsize{14.5pt}\xiipt\@xiipt
\abovedisplayskip 12\p@ plus3\p@ minus7\p@
\belowdisplayskip \abovedisplayskip
\abovedisplayshortskip  \z@ plus3\p@
\belowdisplayshortskip  6.5\p@ plus3.5\p@ minus3\p@
\let\@listi\@listI}
\def\keywords{\vspace{.3em}
{\textit{Keywords}:\,\relax%
}}
\title{Bayesian Machine Learning via Category Theory}
\author{Jared Culbertson and Kirk Sturtz}
\begin{document}

\maketitle
 
\thispagestyle{empty}

\begin{abstract}   From the Bayesian perspective, the category of conditional probabilities (a variant of the Kleisli category of the Giry monad, whose objects are measurable spaces and arrows are Markov kernels) gives a nice framework for conceptualization and analysis of many aspects of machine learning. Using categorical methods, we construct models for parametric and nonparametric Bayesian reasoning on function spaces, thus providing a basis for the supervised learning problem. In particular, stochastic processes are arrows to these function spaces which serve as prior probabilities. The resulting inference maps can often be analytically constructed in this symmetric monoidal weakly closed category. We also show how to view general stochastic processes using functor categories and demonstrate the Kalman filter as an archetype for the hidden Markov model.

 \begin{flushleft}
\keywords{Bayesian machine learning, categorical probability, Bayesian probability } \\

\end{flushleft}
 \end{abstract}
  
\tableofcontents 
 
\section{Introduction}   

 Speculation on the utility of using categorical methods in machine learning (ML) has been expounded by numerous people, including by the denizens at the n-category cafe blog~\cite{ncat} as early as 2007.  Our approach to realizing categorical ML is based upon viewing ML from a probabilistic perspective and using categorical Bayesian probability.   Several recent texts ({\em e.g.}, \cite{Barber, Murphy}), along with countless research papers on ML have emphasized the subject from the perspective of Bayesian reasoning.  Combining this viewpoint with the recent work  \cite{Culbertson}, which provides  a categorical framework for Bayesian probability, we develop a category theoretic perspective on ML.   The abstraction provided by category theory serves as a basis not only for an organization of ones thoughts on the subject, but also provides an efficient graphical method for model building in much the same way that probabilistic graphical modeling (PGM) has provided for Bayesian network problems.
 
In this paper, we focus entirely on the supervised learning problem, {\em i.e.,} the regression or function estimation problem. The general framework applies to any Bayesian machine learning problem, however. For instance, the unsupervised clustering or density estimation problems can be characterized in a similar way by changing the hypothesis space and sampling distribution. For simplicity, we choose to focus on regression and leave the other problems to the industrious reader. For us, then, the Bayesian learning problem is to determine  a function $f: X \rightarrow Y$ which takes an input $\xv \in X$, such as a feature vector, and associates an output (or class) $f(\xv)$ with $\xv$.  Given a measurement $(\xv,y)$, or a set of measurements  
$\{ (\xv_i,y_i)\}_{i=1}^N$ where each $y_i$ is a labeled output ({\em i.e.,} training data),   we interpret this problem as an estimation problem of an unknown function $f$ which lies in $Y^X$, the space of all measurable functions\footnote{Recall that a $\sigma$-algebra $\sa_{X}$ on $X$ is a collection of subsets of $X$ that is closed under complements and countable unions (and hence intersections); the pair $(X, \sa_X)$ is called a {\em measurable space} and any set $A \in \sa_{X}$ is called a {\em measurable set} of $X$.  A {\em measurable} function $f \colon X \to Y$ is defined by the property that for any measurable set $B$ in the $\sigma$-algebra of $Y$, we have that $f^{-1}(B)$ is in the $\sigma$-algebra of $X$. For example, all continuous functions are measurable with respect to the Borel $\sigma$-algebras.} from $X$ to $Y$ such that $f(\xv_i) \approx y_i$.  When $Y$ is a vector space the space $Y^X$ is also a vector space that is infinite dimensional when $X$ is infinite.    If we choose to allow all such functions (every  function $f \in Y^X$ is a valid model) then the problem is nonparametric. On the other hand, if we only allow functions from  some subspace $V \subset Y^X$ of \emph{finite} dimension $p$, then we have a parametric model characterized by a measurable map $i: \Rp \rightarrow Y^X$.  The  image of $i$ is then the space of functions which we consider as valid models of the unknown function for the Bayesian estimation problem. Hence, the elements $\av \in \Rp$ completely determine the valid modeling functions $i(\av) \in Y^X$.  Bayesian modeling splits the problem into two aspects: (1) specification of the hypothesis space, which consist of the ``valid''  functions $f$, and (2) a noisy measurement model such as $y_i = f(\xv_i) + \epsilon_i$, where the noise component $\epsilon_i$ is often modeled by a Gaussian distribution.  Bayesian reasoning with the hypothesis space taken as $Y^X$ or any subspace $V \subset Y^X$ (finite or infinite dimensional) and the noisy measurement model determining a sampling distribution  can then be used to  efficiently estimate (learn) the function $f$ without over fitting the data. 

We cast this whole process into a graphical formulation using category theory, which like PGM, can in turn be used as a modeling tool itself.   In fact, we view the components of these various models, which are just Markov kernels, as interchangeable parts. An important piece of the any solving the ML problem with a Bayesian model consists of choosing the appropriate parts for a given setting.  The close relationship between parametric and nonparametric models comes to the forefront in the analysis with the measurable map $i: \Rp \rightarrow Y^X$ connecting the two different types of models.  To illustrate this point suppose we are given a normal distribution $P$ on $\Rp$ as a prior probability on the unknown parameters. Then the push forward measure\footnote{A measure $\mu$ on a measurable space $(X, \sa_{X})$ is a nonnegative real-valued function $\mu \colon X \to {\mathbb R}_{\geq 0}$ such that $\mu(\emptyset) = 0$ and $\mu(\cup_{i=1}^{\infty} A_{i}) = \sum_{i=1}^{\infty} \mu(A_{i})$. A probability measure is a measure where $\mu(X) = 1$. In this paper, all measures are probability measures and the terminology ``distribution'' will be synonymous with ``probability measure.''} of $P$ by $i$ is a Gaussian process, which is a basic tool in nonparametric modeling. When composed with a noisy measurement model, this provides the whole Bayesian model required for a complete analysis and an inference map can be analytically constructed.\footnote{The inference map need not be unique.}  Consequently, given any measurement $(\xv,y)$ taking the inference map conditioned at $(\xv,y)$ yields the updated prior probability which is another normal distribution on $\Rp$.

The ability to do Bayesian probability involving function spaces relies on the fact that the category  of measurable spaces, $\M$, has the structure of a symmetric monoidal closed category (SMCC). Through the evaluation map, this in turn provides the category of conditional probabilities $\prob$ with the structure of a symmetric monoidal \emph{weakly} closed category (SMwCC), which is necessary for modeling stochastic processes as probability measures on function spaces.  On the other hand, the ordinary product $X \times Y$ with its product $\sigma$-algebra is used for the Bayesian aspect of updating joint (and marginal) distributions.  From a modeling viewpoint, the SMwCC structure is used for carrying along a parameter space (along with its relationship to the output space through the evaluation map). Thus we can describe training data and measurements as ordered pairs $(\xv_i,y_i) \in X \otimes Y$, where $X$ plays the role of a parameter space.
 
\paragraph{A few notes on the exposition.}
In this paper our intended audience consists of (1) the practicing ML engineer with only a passing knowledge of category theory ({\em e.g.}, knowing about objects, arrows and commutative diagrams), and (2) those knowledgeable of category theory with an interest of how ML can be formulated within this context. For the ML engineer familiar with Markov kernels, we believe that the presentation of $\prob$ and its applications can serve as an easier introduction to categorical ideas and methods than many standard approaches.  While some terminology will be unfamiliar, the examples should provide an adequate understanding to relate the knowledge of ML to the categorical perspective. If ML researchers find this categorical perspective useful for further developments or simply for modeling purposes, then this paper will have achieved its goal. 

In the categorical framework for Bayesian probability, Bayes' equation is replaced by an integral equation where the integrals are defined over probability measures.  The analysis requires these integrals be evaluated on arbitrary measurable sets and this is often possible using the three basic rules provided in Appendix A.  Detailed knowledge of measure theory is not necessary outside of understanding these three rules and the basics of $\sigma$-algebras and measures, which are used extensively for evaluating integrals in this paper. Some proofs require more advanced measure-theoretic ideas, but the proofs can safely be avoided by the unfamiliar reader and are provided for the convenience of those who might be interested in such details.  

For the category theorist, we hope the paper makes the fundamental ideas of ML transparent, and conveys our belief that  Bayesian probability can be characterized categorically and usefully applied to  fields such as ML.  We believe the further development of categorical probability can be motivated by such applications and in the final remarks we comment on one such direction that we are pursuing.

These notes are intended to be tutorial in nature, and so contain much more detail that would be reasonable for a standard research paper. As in this introductory section, basic definitions will be given as footnotes, while more important definitions, lemmas and theorems Although an effort has been made to make the exposition as self-contained as possible, complete self-containment is clearly an unachievable goal.  In the presentation, we avoid the use of the terminology of \emph{random variables} for two reasons: (1) formally a random variable is a measurable function $f:X \rightarrow Y$ and a probability measure $P$ on $X$ gives rise to the distribution of the random variable $f_{\star}(P)$ which is the push forward measure of $P$.  In practice the random variable $f$ itself is more often than not impossible to characterize functionally (consider the process of flipping a coin), while reference to the random variable using a binomial distribution, or any other distribution, is simply making reference to some probability measure.  As a result, in practice the term ``random variable'' is often not making reference to any measurable function $f$  and the pushforward measure of some probability measure $P$ at all but rather is  just referring to a probability measure;   (2) the term ``random variable'' has a connotation that, we believe, should be de-emphasized in a Bayesian approach to modeling uncertainty.  Thus while a random variable can be modeled as a push forward probability measure within the framework presented we feel no need to single them out as having any special relevance beyond the remark already given.  In illustrating the application of categorical  Bayesian probability we do however show how to translate the familiar language of random variables into the unfamiliar categorical framework for the particular case of Gaussian distributions which are the most important application for ML since Gaussian Processes are characterized on finite subsets by Gaussian distributions.  This provides a particularly nice illustration of the non uniqueness of conditional sampling distribution and inference pairs given a joint distribution.

\paragraph{Organization.}
The paper is organized as follows:  The theory of Bayesian probability  in $\prob$ is first addressed and applied to elementary problems on finite spaces where the detailed solutions to inference, prediction and decision problems are provided.  If one understands the ``how and why'' in solving these problems then the extension to solving problems in ML is a simple step as one uses the same basic paradigm with only the hypothesis space changed to a  function space.   Nonparametric modeling is  presented next,  and then the parametric model can seen as a submodel  of the nonparametric model.   We then proceed to give a  general definition of stochastic process as a special type of arrow in a functor category $\prob^X$, and by varying the category $X$ or placing conditions on the projection maps onto subspaces one obtains the various types of stochastic processes such as Markov processes or GP.  Finally, we remark on the area where category theory may have the biggest impact on applications for ML by integrating the probabilistic models with decision theory into one common framework.

The results presented here derived from a categorical analysis of the ML problem(s) will come as no surprise to ML professionals.  We acknowledge and thank our colleagues who are experts in the field who provided assistance and feedback.

\section{The Category of Conditional Probabilities}

The development of a categorical basis for probability was initiated  by Lawvere \cite{Lawvere}, and further developed by Giry \cite{Giry} using monads to characterize the adjunction given in Lawvere's original work.  The Kleisli category of the Giry monad $\G$ is what Lawvere called the category of probabilistic mappings and what we shall refer to as the category of conditional probabilities.\footnote{Monads had not yet been developed at the time of Lawvere's work. However the adjunction construction he provided was the Giry monad on measurable spaces.}  Further progess was given in the unpublished dissertation of  Meng~\cite{Meng} which provides a wealth of information and provides a basis for thinking about stochastic processes from a categorical viewpoint.  While this work does not  address the  Bayesian perspective it does provide an alternative ``statistical viewpoint''  toward solving such problems using generalized metrics.  Additional interesting work on this category is presented in a seminar by Voevodsky, in Russian,  available in an online video \cite{Voev}.  The extension of categorical probability to the Bayesian viewpoint is given in the paper~\cite{Culbertson}, though Lawvere and Peter Huber were aware of a similar approach in the 1960's.\footnote{In a personal communication Lawvere related that he and Peter Huber gave a seminar in Zurich around 1965 on ``Bayesian sections.'' This refers to the existence of inference maps in the Eilenberg--Moore category of $\G$-algebras.  These inference maps are discussed in Section 3, although we discuss them only in the context of the category $\prob$.}  Coecke and Speckens~\cite{Coecke} provide an alternative graphical language for Bayesian reasoning under the assumption of finite spaces which they refer to as standard probability theory.  In such spaces the arrows can be represented by stochastic matrices~\cite{Fritz}. More recently Fong~\cite{Brendan} has provided further applications of the category of conditional probabilities to Causal Theories for Bayesian networks.  

Much of the material in this section is directly from \cite{Culbertson}, with some additional explanation where necessary. The category%
\footnote{A {\em category} is a collection of (1) objects and (2) morphisms (or arrows) between the objects (including a required identity morphism for each object), along with a prescribed method for associative composition of morphisms.} 
of conditional probabilities, which we denote by $\prob$, has countably generated%
\footnote{A space $(X,\sa_X)$ is countably generated if there exist a countable set of measurable sets $\{A_i\}_{i=1}^\infty$ which generated the $\sigma$-algebra $\sa_X$.} 
measurable spaces $(X, \sa_X)$ as  objects and an arrow between two such objects
\begin{equation*}
 \begin{tikzpicture}[baseline=(current bounding box.center)]

         \node (X) at  (0,0)  {$(X, \sa_X)$};
         \node (Y)  at  (4,0)   {$(Y, \sa_Y)$};

	\draw[->,above] (X) to node {$T$} (Y);

 \end{tikzpicture}
\end{equation*}
is a Markov kernel (also called a \emph{regular} conditional probability) assigning to each element $x \in X$ and each measurable set $B \in \sa_Y$ the probability of $B$ given $x$, denoted  $T(B \mid x)$.  The term ``regular''  refers to the fact that the function $T$ is conditioned on points rather than measurable sets $A \in \sa_X$. When $(X,\sa_X)$ is a countable set (either finite or countably infinite) with the discrete $\sigma$-algebra then every singleton $\{x\}$ is  measurable and the term ``regular'' is unnecessary.  
More precisely,  an arrow $T\colon X \rightarrow Y$ in $\prob$  is  a function $T\colon~\sa_Y  \times X~\rightarrow~[0,1]$ satisfying
\begin{enumerate}
\item for all $B\in \sa_Y$, the function $T(B \mid \cdot)\colon X \rightarrow [0,1]$ is measurable, and
\item for all $x \in X$, the function $T(\cdot \mid x)\colon \sa_Y \rightarrow [0,1]$ is a perfect probability measure\footnote{A perfect probability measure $P$ on $Y$ is a probability measure  such that for any measurable function $f:Y \rightarrow \mathbb{R}$ there exist a real Borel set $E \subset f(Y)$ satisfying $P(f^{-1}(E))=1$.} on $Y$.
\end{enumerate}
For technical reasons it is necessary that the probability measures in (2) constitute an equiperfect family of probability measures to avoid pathological cases which prevent the existence of inference maps necessary for Bayesian reasoning.\footnote{Specifically, the subsequent Theorem  \ref{regularConditional} is a constructive procedure which requires perfect probability measures.  Corollary \ref{inferenceExistence} then gives the inference map.  Without the hypothesis of perfect measures a pathological counterexample can be constructed as in \cite[Problem 10.26]{Dudley}.  The paper by Faden~\cite{Faden} gives conditions on the existence of conditional probabilities and this constraint is explained in full detail in \cite{Culbertson}. Note that the class of perfect measures is quite broad and includes all probability measures defined on Polish spaces.}

The notation $T(B\mid x)$ is chosen as it coincides  with the standard notation ``$p(H \mid D)$''  of conditional probability theory.
For an arrow $T\colon \mO{X} \rightarrow \mO{Y}$, we occasionally denote the measurable function $T(B \mid \cdot)\colon \Sigma_Y \rightarrow [0,1]$ by $T_B$ and the probability measure $T(\cdot \mid x)\colon \Sigma_Y \rightarrow [0,1]$ by $T_x$.    
Hereafter, for notational brevity we write a measurable space $(X,\sa_X)$ simply as $X$ when referring to a generic $\sigma$-algebra $\sa_X$.  

Given two arrows
\begin{equation*}
 \begin{tikzpicture}[baseline=(current bounding box.center)]

         \node (X) at  (0,0)  {$X$};
         \node (Y)  at  (3,0)   {$Y$};
         \node (Z) at  (6,0)   {$Z$};
	\draw[->,above] (X) to node {$T$} (Y);
         \draw[->,above] (Y)  to node {$U$} (Z);
 \end{tikzpicture}
 \end{equation*}
the composition $U \circ T\colon \sa_Z  \times X   \rightarrow [0,1]$ is \emph{marginalization over $Y$} defined by
\begin{equation*} 
(U \circ T)(C \mid x) = \int_{y \in Y} U(C \mid y) \, dT_x.
\end{equation*}

The integral of any real valued measurable
function $f\colon X \rightarrow \mathbb{R}$ with respect to any measure $P$ on $X$  is 
\begin{equation} \label{expectation} 
\E_{P}[f] = \int_{x \in X} f(x) \, dP,
\end{equation}
called the \emph{$P$-expectation of $f$}.
Consequently the composite $(U \circ T)(C \mid x)$ is the
$T_x$-expectation of $U_{C}$,
\begin{equation*}
 (U \circ T)(C \mid x)= \E_{T_x}[U_{C}]. 
 \end{equation*}

 Let $\M$ denote the category of measurable spaces where the objects are measurable spaces $(X,\sa_X)$ and the arrows are measurable functions $f\colon X \rightarrow Y$.   
 Every measurable mapping $f\colon X \rightarrow Y$ may be regarded as a $\prob$ arrow
\begin{equation*}
 \begin{tikzpicture}[baseline=(current bounding box.center)]

         \node (X) at  (0,0)  {$X$};
         \node (Y)  at  (3,0)   {$Y$};
	\draw[->,above] (X) to node {$\delta_f$} (Y);

	 \end{tikzpicture}
 \end{equation*}
defined by the Dirac (or one point) measure
\begin{equation*}
\begin{array}{lclcl}
\delta_f&:&X \times \sa_Y & \rightarrow &  [0,1] \\
&:& (B \mid x) & \mapsto & \left\{ \begin{array}{c} 1 \quad \textrm{ If }f(x)\in B
\\ 0 \quad \textrm{If }f(x)\notin B. \end{array} \right.
\end{array}
\end{equation*}
The relation between the dirac measure and the characteristic (indicator) function $\ch$ is 
\be \nonumber
\delta_f(B \mid x) = \ch_{f^{-1}(B)}(x)
\ee
and this property is used ubiquitously in the analysis of integrals.

Taking the measurable mapping $f$ to be the identity map on $X$ gives for each object $X$ the morphism $X \stackrel{\delta_{Id_X}}{\longrightarrow} X$ given by
 \be \nonumber
 \delta_{Id_X}(B \mid x) =  \left \{ 
 \begin{array}{lcl}
 1 &  \textrm{ if } x\in B \\
 0 & \textrm{ if } x \notin B
 \end{array} \right. 
 \ee
which is the identity morphism for $X$ in $\prob$.   Using standard notation we denote the identity mapping on any object $X$ by $1_X = \delta_{Id_X}$, or for brevity simply by $1$ if the space $X$ is clear from the context.
 With these objects and arrows, law of composition, associativity, and identity, standard measure-theoretic arguments show that $\prob$ forms  a category.

There is a distinguished object in $\prob$ that play an important role in Bayesian probability. For any set $Y$ with the indiscrete $\sigma$-algebra $\sa_Y=\{Y, \emptyset\}$, there is a unique arrow from any object $X$ to $Y$ since any arrow $P\colon X \rightarrow Y$ is completely determined by the fact that $P_{x}$ must be a probability measure on $Y$.  Hence $Y$ is a \emph{terminal} object, and we denote the unique arrow by $!_X : X \rightarrow Y$.   Up to isomorphism,  the canonical terminal object is the one-element set which we denote by $1 = \{ \star\}$ with the only possible $\sigma$-algebra. It follows that any  arrow $P:1 \rightarrow X$
from the terminal object to any space $X$ is an (absolute) probability measure on $X$, {\em i.e.}, it is an  ``absolute'' probability measure on $X$ because there is no variability (conditioning) possible within the singleton set $1=\{\star\}$.  
\begin{figure}[H]
\begin{equation}  \nonumber
 \begin{tikzpicture}[baseline=(current bounding box.center)]

         \node (1) at  (0,0)  {$1$};
         \node (X)  at  (3,0)   {$X$};
	\draw[->,above] (1) to node {$P$} (X);

 \end{tikzpicture}
 \end{equation}
 \caption{The representation of a probability measure in $\prob$.}
\end{figure}
\noindent
We refer to any arrow $P\colon 1 \rightarrow X$ with domain $1$ as either a probability measure or a distribution on $X$.  If $X$ is countable then $X$ is isomorphic in $\prob$ to a discrete space $\mathbf{m} = \{0,1,2, \ldots, m-1\}$ with the discrete $\sigma$-algebra where the integer $m$ corresponds to the number of atoms in the $\sigma$-algebra $\sa_X$.  
Consequently every finite space is, up to isomorphism, just a discrete space and therefore every distribution  $P\colon 1 \rightarrow X$ is of the form $P = \sum_{i=0}^{m-1} p_i \delta_i$ where $\sum_{i=0}^{m-1} p_i =1$.

 \subsection{(Weak) Product Spaces and Joint Distributions}  
 \label{sec:products}

  In Bayesian probability, determining the joint distribution on a ``product space'' is often the problem to be solved.  In many applications for which Bayesian reasoning in appropriate, the problem reduces to computing a particular marginal or conditional probability; these can be obtained in a straightforward way if the joint distribution is known. Before proceeding to  formulate precisely what the term ``product space'' means in $\prob$, we describe the categorical construct of a \emph{finite product space} in any category.  
  
Let $\C$ be an arbitary category and $X, Y \in_{ob} \C$.  We say the product of $X$ and $Y$ exists if there is an object, which we denote by $X \times Y$, along with two arrows $p_X\colon X \times Y \rightarrow X$ and $p_Y\colon X \times Y \rightarrow Y$  in $\C$ such that given any other object $T$ in $\C$ and arrows $f:T \rightarrow X$ and $g:T \rightarrow Y$ there is a \emph{unique} $\C$ arrow $\langle f,g \rangle\colon T \rightarrow X \times Y$ that makes the diagram
\begin{equation} \label{binary products}
 \begin{tikzpicture}[baseline=(current bounding box.center)]
 	\node	(T)	at	(0,0)		         {$T$};
	\node	(X)	at	(-3,-3)	         {$X$};
	\node	(Y)	at	(3,-3)               {$Y$};
	\node        (XY)  at      (0,-3)               {$X \times Y$};
	
	\draw[->, left] (T) to node  {$f$} (X);
	\draw[->,right] (T) to node {$g$} (Y);
	\draw[->,right, dashed] (T) to node {$\langle f,g \rangle$} (XY);
	\draw[->,below] (XY) to node {$p_{X}$} (X);
	\draw[->,below] (XY) to node {$p_{Y}$} (Y);
 \end{tikzpicture}
 \end{equation}
commute. If the given diagram is a product then we often write the product as a triple $(X \times Y, p_X, p_Y)$.  We must not let the notation deceive us; the object $X \times Y$ could just as well be represented by $P_{X,Y}$. The important point is that it is an object in $\C$ that we need to specify in order to show that binary products exist.   Products are an example of a universal construction in categories.  The term ``universal''  implies that these constructions are unique up to a unique isomorphism.  Thus if $(P_{X,Y}, p_X, p_y)$ and $(Q_{X,Y}, q_X, q_Y)$ are both products for the objects $X$ and $Y$ then there exist unique arrows $\alpha \colon P_{X,Y} \rightarrow Q_{X,Y}$ and $\beta \colon Q_{X,Y} \rightarrow P_{X,Y}$ in $\C$ such that $\beta \circ \alpha = 1_{P_{X,Y}}$ and $\alpha \circ \beta = 1_{Q_{X,Y}}$ so that the objects $P_{X,Y}$ and $Q_{X,Y}$ are isomorphic.

If the product of all object pairs $X$ and $Y$ exist in $\C$ then we say binary products exist in $\C$.  The existence of binary products implies the existence of arbitrary finite products in $\C$. So if $\{X_i\}_{i=1}^N$ is a finite set of objects in $\C$ then there is an object which we denote by $\prod_{i=1}^N X_i$  (in general, this need not be the cartesian product) as well as arrows $\{p_{X_j}: \prod_{i=1}^N X_i \rightarrow X_j\}_{j=1}^N$. Then if we are given an arbitrary $T \in_{ob} C$ and a family of arrows $f_j: T \rightarrow X_j$ in $\C$ there exists a unique $\C$ arrow $\langle f_1,\ldots,f_N \rangle$ such that for every integer $j \in \{1,2,\ldots,N\}$ the  diagram 
 \begin{equation*}  
 \begin{tikzpicture}[baseline=(current bounding box.center)]
 	\node	(T)	at	(0,0)		         {$T$};
	\node	(Xj)	at	(-3,-3)	         {$X_j$};
	\node        (XY)  at      (0,-3)               {$\displaystyle{\prod_{i=1}^N} X_i$};
	
	\draw[->, left] (T) to node  {$f_j$} (Xj);
	\draw[->,right, dashed] (T) to node {$\langle f_1,\ldots,f_N \rangle$} (XY);
	\draw[->,below] (XY) to node {$p_{X_j}$} (Xj);

 \end{tikzpicture}
 \end{equation*}
commutes.  The arrows $p_{X_i}$ defining a product space are often called the projection maps due to the analogy with the cartesian products in the category of sets, $\Set$. 
  
In $\Set$, the product of two sets $X$ and $Y$ is the cartesian product $X \times Y$ consisting of all pairs $(x,y)$ of elements with $x \in X$ and $y \in Y$ along with the two projection mappings $\pi_X\colon X \times Y \rightarrow X$ sending $(x,y) \mapsto x$ and $\pi_Y\colon X \times Y \rightarrow Y$ sending $(x,y) \mapsto y$.  Given any pair of functions $f\colon T \rightarrow X \times Y$ and $g\colon T \rightarrow X \times Y$ the function $\langle f,g \rangle \colon T \rightarrow X \times Y$ sending $t \mapsto (f(t),g(t))$ clearly makes Diagram~\ref{binary products} commute.  But it is also the unique such function because if $\gamma\colon T \rightarrow X \times Y$ were any other function making the diagram commute then the equations
  \be \label{productEqs}
 ( p_X \circ \gamma)(t) = f(t) \quad \textrm{ and } \quad (p_Y \circ \gamma)(t) = g(t)
  \ee
  would also be satisfied.  But since the function $\gamma$ has codomain $X \times Y$ which consist of ordered pairs $(x,y)$ it follows that for each $t \in T$ that $\gamma(t) =  \langle \gamma_1(t), \gamma_2(t) \rangle$ for some functions $\gamma_1\colon T \rightarrow X$ and $\gamma_2\colon T \rightarrow Y$.  Substituting $\gamma = \langle \gamma_1, \gamma_2 \rangle$ into equations \ref{productEqs} it follows that
  \be \nonumber
  \begin{array}{c}
   f(t) = (p_X \circ ( \langle \gamma_1, \gamma_2 \rangle))(t) = p_X(\gamma_1(t), \gamma_2(t)) = \gamma_1(t) \\
    g(t) = (p_Y \circ ( \langle \gamma_1, \gamma_2 \rangle))(t) = p_Y(\gamma_2(t), \gamma_2(t)) = \gamma_2(t)
    \end{array}
 \ee
 from which it follows $\gamma = \langle \gamma_1, \gamma_2 \rangle = \langle f,g \rangle$ thereby proving that there exist at most one such function $T \rightarrow X \times Y$ making the requisite Diagram~\ref{binary products} commute.  If the requirement of the uniqueness of the arrow $\langle f,g \rangle$ in the definition of a product  is dropped then we have the definition of a \emph{weak product} of $X$ and $Y$.

  Given the relationship between the categories $\prob$ and $\M$ it is worthwhile to examine products  in $\M$.  Given  $X, Y \in_{ob} \M$  the product $X \times Y$ is the cartesian product $X \times Y$ of sets endowed with the smallest $\sigma$-algebra such that the two set projection maps $\pi_X\colon X \times Y \rightarrow X$ sending $(x,y) \mapsto x$ and $\pi_Y\colon X \times Y \rightarrow Y$ sending $(x,y)\mapsto y$ are measurable.  In other words, we take the smallest subset of the powerset of $X \times Y$ such that for all $A \in \sa_X$ and for all $B \in \sa_Y$ the preimages $\pi_X^{-1}(A) = A \times Y$ and $\pi_Y^{-1}(B) = X \times B$ are measurable.  Since a $\sigma$-algebra requires that the intersection of any two measurable sets is also measurable it follows that $\pi_X^{-1}(A) \cap \pi_Y^{-1}(B) = A \times B$ must also be measurable.  Measurable sets of the form $A \times B$ are called rectangles and \emph{generate} the collection of all measurable sets defining the $\sigma$-algebra $\sa_{X \times Y}$ in the sense that $\sa_{X \times Y}$ is equal to the intersection of all $\sigma$-algebras containing the rectangles. When the $\sigma$-algebra on a set is determined by the a family of maps $\{p_k\colon X \times Y \rightarrow Z_k\}_{k \in K}$, where $K$ is some indexing set such that all of these maps $p_k$ are measurable we say the $\sigma$-algebra is induced (or generated) by the family of maps $\{p_k\}_{k \in K}$.\footnote{The terminology \emph{initial} is also used in lieu of induced.}  The cartesian product $X \times Y$ with the $\sigma$-algebra induced by the two projection maps $\pi_X$ and $\pi_Y$ is easily verified to be a product of $X$ and $Y$ since given any two measurable maps $f\colon Z \rightarrow X$ and $g\colon Z \rightarrow Y$ the map $\langle f,g \rangle\colon Z \rightarrow X \times Y$ sending $z \mapsto (f(z),g(z))$ is the unique measurable map satisfying the defining property of a product for $(X\times Y, \pi_X,\pi_Y)$.  This $\sigma$-algebra induced by the projection maps $\pi_X$ and $\pi_Y$ is called the product $\sigma$-algebra and the use of the notation $X \times Y$ in $\M$ will imply the product $\sigma$-algebra on the set $X \times Y$.
  
  Having the product $(X \times Y, \pi_X, \pi_Y)$ in $\M$ and the fact that every measurable function $f \in_{ar} \M$ determines an arrow $\delta_f \in_{ar} \prob$,  it is tempting to consider the triple $(X \times Y, \delta_{\pi_X}, \delta_{\pi_Y})$ as a potential product in $\prob$.   However taking this triple fails to be a product space of $X$ and $Y$ in $\prob$ because the uniqueness condition fails; given two probability measures $P\colon 1 \rightarrow X$ and $Q\colon1 \rightarrow Y$ there are many joint distributions $J$ making the diagram 
  \begin{equation}   \label{weakproducts}
 \begin{tikzpicture}[baseline=(current bounding box.center)]
 	\node	(1)	at	(0,0)		         {$1$};
	\node	(Xj)	at	(-3,-3)	         {$X$};
	\node         (Y)   at       (3,-3)               {$Y$};
	\node        (XY)  at      (0,-3)               {$X \times Y$};
	
	\draw[->, left] (1) to node  {$P$} (X);
	\draw[->,right] (1) to node {$Q$} (Y);
	\draw[->,right, dashed] (1) to node {$J$} (XY);
	\draw[->,below] (XY) to node {$\delta_{\pi_X}$} (X);
	\draw[->,below] (XY) to node {$\delta_{\pi_Y}$} (Y);
 \end{tikzpicture}
 \end{equation}
commute. In particular, the tensor product measure defined on rectangles by $(P \otimes Q)(A \times B) = P(A) Q(B)$ extends to a joint probability measure on $X \times Y$ by
\be \label{ltensor1}
(P \otimes Q)(\varsigma) = \int_{y \in Y} P(\Gamma_{\overline{y}}^{-1}(\varsigma)) \, dQ \quad \forall \varsigma \in \sa_{X \times Y}
\ee
or equivalently,
\be \label{rtensor1}
(P \otimes Q)(\varsigma) = \int_{x \in X} Q(\Gamma_{\overline{x}}^{-1}(\varsigma)) \, dP \quad \forall \varsigma \in \sa_{X \times Y}.
\ee
Here $\overline{x}\colon Y \to X$ is the constant function at $x$ and $\Gamma_{\overline x}\colon Y \to X \times Y$ is the associated graph function, with $\overline y$ and $\Gamma_{\overline y} $ defined similarly. 
The fact that $Q \otimes P = P \otimes Q$ is Fubini's Theorem; by taking a rectangle $\varsigma = A \times B \in \sa_{X \times Y}$ the equality of these two measures is immediate since
\be \label{Fubini}
\begin{array}{lcl}
(P \otimes Q)(A \times B) &=& \int_{y \in Y} P( \underbrace{\Gamma_{\overline{y}}^{-1}(A \times B)}_{= \left\{ \begin{array}{ll} A & \textrm{ iff } y \in B \\ \emptyset & \textrm{ otherwise } \end{array} \right.  }) \, dQ \\
&=& \int_{y \in B} P(A) \, dQ \\
&=& P(A) \cdot Q(B) \\
&=& \int_{x \in A} Q(B) \, dP \\
&=& \int_{x \in X} Q( \Gamma_{\overline{x}}^{-1}(A \times B)) \, dP \\
&=&(Q \otimes P)(A \times B)
\end{array}
\ee
Using the fact that every measurable set $\varsigma$ in $X \times Y$ is a countable union of rectangles, Fubini's Theorem follows.

It is clear that in $\prob$ the uniqueness condition required in the definition of a product of $X$ and $Y$ will always fail unless at least one of $X$ and $Y$ is a terminal object $1$, and consequently  only  weak products exist in $\prob$.    However it is the nonuniqueness of products in $\prob$ that makes this category interesting.  Instead of referring to weak products in $\prob$ we shall abuse terminology and simply refer to them as products with the understanding that all products in $\prob$ are weak.

\subsection{Constructing a Joint Distribution Given Conditionals}
 \label{section::construct_joint}
 We now show how marginals and conditionals can be used to determine joint distributions in $\prob$.  Given a conditional probability measure $h\colon  X \to Y$ and a probability measure  $P_{X}\colon  1 \to X$  on $X$, consider the diagram 
  \begin{equation}   \label{jointDefinition}
 \begin{tikzpicture}[baseline=(current bounding box.center)]
 	\node	(1)	at	(0,0)		         {$1$};
	\node	(X)	at	(-4,-3)	                  {$X$};
	\node	(Y)	at	(4,-3)               {$Y$};
	\node        (XY)  at      (0,-3)               {$X \times Y$};
	
	\draw[->, left, above] (1) to node  {$P_X$} (X);
	\draw[->,dashed,left,above] (XY) to node {$\delta_{\pi_{X}}$} (X);
	\draw[->,dashed,right,auto] (XY) to node {$\delta_{\pi_{Y}}$} (Y);
	\draw[->,dashed, below,auto] (1) to node {$J_{h}$} (XY);
	\draw[->]  (X) to [out=305,in=235,looseness=.5]  (Y) ;
	\draw (0,-3.9) node {$h$};
 \end{tikzpicture}
 \end{equation}
where $J_{h}$ is the uniquely determined joint distribution on the product space $X \times Y$ defined on the rectangles of the $\sigma$-algebra $\sa_{X} \times \sa_{Y}$ by
 \be \label{jointD}
 J_{h}(A \times B) = \int_{A} h_{B} \, dP_X.
 \ee
 The marginal of $J_{h}$ with respect to $Y$ then satisfies $\delta_{\pi_{Y}} \circ J_{h} = h \circ P_X$ and the marginal of $J_{h}$ with respect to $X$ is $P_X$. By a symmetric argument, if we are given a probability measure $P_Y$  and conditional probability $k\colon  Y \to X$
 then we obtain a unique joint distribution $J_{k}$ on the product space $X \times Y$ given on the rectangles by
 \be \nonumber
 J_{k}(A \times B) = \int_{B} k_{A} \, dP_Y.
 \ee
However if we are given $P_{X}, P_{Y}, h, k$ as indicated in the diagram
\be
 \begin{tikzpicture}[baseline=(current bounding box.center)]
 	\node	(1)	at	(0,0)		         {$1$};
	\node	(X)	at	(-3,-4)	                 {$X$};
	\node	(Y)	at	(3,-4)               {$Y$,};
	\node        (XY)  at      (0,-2)               {$X \times Y$};
	
	\draw[->, above right] (1) to node  {$P_Y$} (Y);
	\draw[->, above left] (1) to node {$P_{X}$} (X);
	\draw[->,dashed, right] (XY) to node[xshift=8pt] {$\delta_{\pi_{X}}$} (X);
	\draw[->,dashed, left] (XY) to node {$\delta_{\pi_{Y}}$} (Y);
	\draw[->,dashed, right] ([xshift=2pt] 1.south) to node {$J_{k}$} ([xshift=2pt] XY.north);
	\draw[->,dashed, left] ([xshift=-2pt] 1.south) to node {$J_{h}$} ([xshift=-2pt] XY.north);
	\draw[->, above] ([yshift=2pt] X.east) to node {$h$} ([yshift=2pt] Y.west);
	\draw[->, below] ([yshift=-2pt] Y.west) to node {$k$} ([yshift=-2pt] X.east);

 \end{tikzpicture}
 \ee
then we have that $J_{h}=J_{k}$ if and only if the compatibility condition \index{compatibility condition} is satisfied on the rectangles 
\be  
\label{eqn::product_rule}
 \int_{A} h_{B} \, dP_X = J( A \times B ) = \int_{B} k_{A} \, dP_Y \quad \forall A \in \sa_X, \forall B \in \sa_Y.
\ee

 In the extreme case, suppose we have a conditional $h\colon  X \to Y$ which factors through the terminal object $1$ as 
 \begin{equation}  \nonumber
 \begin{tikzpicture}[baseline=(current bounding box.center)]
 
 	\node	(X)	at	(0,0)		         {$X$};
	\node	(Y)	at	(3,0)	         {$Y$};
	\node	(1)	at	(1.5,-1)          {$1$};

	\draw[->, above] (X) to node  {$h$} (Y);
	\draw[->, below left] (X) to node {$!$} (1);
	\draw[->,below right]  (1)  to node {$Q$}  (Y);

 \end{tikzpicture}
 \end{equation}
 where $!$ represents the unique arrow from $X \to 1$. If we are also given a probability measure $P\colon  1 \to X$, then we can calculate the joint distribution determined by $P$ and $h=Q \circ !$ as
\be  \nonumber
\begin{array}{lcl}
J(A \times B) &=& \int_{A} (Q \circ !)_B \, dP \\
&=& P(A) \cdot Q(B) 
\end{array}
\ee
so that  $J=P \otimes Q$.  In this situation we say that the marginals $P$ and $Q$ are \emph{independent}.   Thus in $\prob$ independence corresponds to  a special instance of a conditional---one that factors through the terminal object.

\subsection{Constructing Regular Conditionals given a Joint Distribution} 

The following result is the theorem from which the inference maps in Bayesian probability theory are constructed. The fact that we require equiperfect families of probability measures is critical for the construction. 

\begin{thm}  \label{regularConditional} Let $X$ and $Y$ be countably generated measurable spaces and $(X \times Y, \sa_{X \times Y})$ the  product in $\M$ with projection map $\pi_Y$.  If $J$ is a joint distribution on $X \times Y$ with marginal $P_Y=\delta_{\pi_Y} \circ J$ on $Y$, then there exists a $\prob$ arrow $f$ that makes the diagram
  \begin{equation}   \label{productDiagram}
 \begin{tikzpicture}[baseline=(current bounding box.center)]
 	\node	(1)	at	(0,0)		         {$1$};
	\node	(Y)	at	(3,-2)	               {$Y$};
	\node        (XY)  at      (0,-2)               {$X \times Y$};
	
	\draw[->,right] (1) to node  {$P_Y$} (Y);
	\draw[->,left] (1) to node {$J$} (XY);
	\draw[->,above] (XY) to node {$\delta_{\pi_Y}$} (Y);
	\draw[->,dashed] (Y) to [out=210,in=-20,looseness=.5] (XY);
	         \draw (1.5, -2.65) node {$f$};
 \end{tikzpicture}
 \end{equation}
commute and satisfies
\be \nonumber
	\int_{A \times B} {\delta_{\pi_{Y}}}_{C}\,dJ = \int_{C} f_{A \times B}\,dP_{Y}. 
\ee
Moreover, this $f$ is the unique $\prob$-morphism with these properties,  up to a set of $P_Y$-measure zero.  
\end{thm}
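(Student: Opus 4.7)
My plan is to identify $f\colon Y \to X \times Y$ as a \emph{disintegration} of $J$ along $\pi_Y$: for each $y$, the probability measure $f_y$ should be the conditional distribution of a $J$-distributed pair given that its $Y$-coordinate equals $y$. I will therefore reduce to the classical existence theorem for regular conditional probabilities---whose hypotheses (countable generation of $\sa_{X\times Y}$ and perfection of $J$) are precisely those assumed here---and then verify that the object it produces is a $\prob$-arrow satisfying both the integral identity and the commutativity of Diagram~\ref{productDiagram}.

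For existence I would invoke the following: since $X \times Y$ is countably generated and $J$ is perfect, there is (e.g.\ by Faden~\cite{Faden}) a map $q\colon \sa_{X\times Y} \times Y \to [0,1]$ that is $\sa_Y$-measurable in $y$, a probability measure in its first argument, and satisfies
\[ \int_C q(E \mid y)\, dP_Y(y) \;=\; J\bigl(E \cap \pi_Y^{-1}(C)\bigr) \qquad \forall\, E \in \sa_{X\times Y},\ C \in \sa_Y. \]
I would then take $f := q$. For the integral identity in the theorem, specialize to $E = A \times B$ and observe that $\delta_{\pi_Y}(C \mid (x,y')) = \ch_C(y')$, so the left-hand side collapses to $J(A \times (B \cap C))$, matching $\int_C f_{A\times B}\, dP_Y$. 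For commutativity of the diagram, choosing $C = Y$ in the displayed identity yields $f \circ P_Y = J$, while choosing $A = X$ forces $f(X \times B \mid y) = \ch_B(y)$ for $P_Y$-a.e.\ $y$, which is exactly $\delta_{\pi_Y} \circ f = 1_Y$ off a $P_Y$-null set.

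For uniqueness, suppose $f'$ is a second arrow with the same properties. Then for each rectangle $A \times B$ the measurable functions $f_{A\times B}$ and $f'_{A\times B}$ integrate identically against $P_Y$ over every $C \in \sa_Y$, so they agree $P_Y$-a.e. I would fix a countable $\pi$-system $\mathcal{A}$ of rectangles generating $\sa_{X\times Y}$ (available because $\sa_X$ and $\sa_Y$ are countably generated), let $N$ be the union of the countably many exceptional null sets, and note that for $y \notin N$ the probability measures $f_y$ and $f'_y$ coincide on $\mathcal{A}$, hence on all of $\sa_{X\times Y}$ by the uniqueness of extension of a probability measure from a $\pi$-system.

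The main obstacle is the first step: the existence of the disintegration $q$. Without perfection of $J$ the construction genuinely fails---the pathological example in \cite[Problem 10.26]{Dudley} cited in the footnote demonstrates exactly this---so the real content is buried inside that classical theorem. Once it is granted, the remaining work is essentially bookkeeping: unwinding the $\prob$-composition rule, rewriting $\delta_{\pi_Y}(C \mid \cdot)$ as the indicator $\ch_{\pi_Y^{-1}(C)}$, and applying a standard $\pi$-system argument that is enabled precisely by the countable generation hypothesis on the underlying measurable spaces.
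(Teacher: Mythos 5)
Your proposal is correct in outline and arrives at the same object, but it routes the central difficulty through a citation where the paper does the work. The existence of the disintegration $q$ --- which you rightly identify as the real content --- is exactly what the paper's proof constructs by hand: it fixes a countable generating algebra for $\sa_{X\times Y}$, forms the Radon--Nikodym derivatives $\widetilde f_A = d\mu_A/dP_Y$ of the measures $\mu_A(B) = J(A\cap\pi_Y^{-1}B)$ with respect to $P_Y$, discards a single $P_Y$-null set off of which these derivatives are simultaneously a $[0,1]$-valued, finitely additive, normalized set function on the generating algebra, and then applies the Carath\'eodory extension theorem fibrewise to obtain a genuine probability measure $f_y$ for each good $y$ (setting $f_y = J$ on the exceptional set, which preserves measurability in $y$ and costs nothing). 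Perfection is what makes the fibrewise extension step legitimate; this is precisely where the pathology you mention would otherwise bite. So your appeal to the classical regular-conditional-probability theorem of Faden is logically sound --- its hypotheses match the theorem's --- but the theorem you are invoking is, for the purposes of this paper, the theorem being proved, so as a self-contained argument your proposal is hollow at its core. On the other side of the ledger, your verification steps add genuine value: the paper's written proof establishes only $f\circ P_Y = J$ and leaves the displayed integral identity and the a.e.-uniqueness claim unargued, whereas your rectangle computation reducing both sides to $J(A\times(B\cap C))$ and your countable-$\pi$-system argument for uniqueness are exactly the missing bookkeeping and are correct as stated.
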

\begin{proof} 
Since $\sa_X$ and $\sa_Y$ are both countably generated, it follows that $\sa_{X \times Y}$ is countably generated as well.  Let $\G$ be a countable generating set for $\sa_{X \times Y}$.  For each $A \in \G$, define a measure $\mu_{A}$ on $Y$ by 
\[
	\mu_{A}(B) = J( A \cap \pi_{Y}^{-1}B).
\]
Then $\mu_{A}$ is absolutely continuous with respect to $P_{Y}$ and hence we can let $\widetilde{f}_{A} = \frac{d\mu_{A}}{dP_{Y}}$, the Radon--Nikodym derivative.  For each $A \in \G$ this Radon--Nikodym derivative is unique up to a set of measure zero,  say $\hat{A}$. Let $N= \cup_{A \in \A} \hat{A}$ and $E_1=N^c$.  Then $\widetilde{f}_A|_{E_1}$ is unique for all $A \in \A$.  Note that  $f_{X \times Y} =1$ and $f_{\emptyset}=0$ on $E_1$. The  condition $\widetilde{f}_A \leq 1$ on $E_1$  for all $A \in \A$ then follows.

For all $B \in \sa_Y$ and any countable union $\cup_{i=1}^{n} A_i$ of disjoint sets of $\A$  we have
\be \nonumber
\begin{array}{lcl}
	\int_{B \cap E_1} \widetilde{f}_{\cup_{i=1}^{n} A_i} dP_Y &=& J\left((\cup_{i=1}^{n} A_i) \cap \pi_Y^{-1}B \right)\\
														&=& \sum_{i=1}^{n} J(A_i \cap \pi_Y^{-1}B)\\
														&=& \int_{B \cap E_1} \sum_{i=1}^{n} \widetilde{f}_{A_i} dP_Y,
\end{array}
\ee
with the last equality following from the Monotone Convergence Theorem and the fact that all of the $\widetilde{f}_{A_i}$ are nonnegative. From the uniqueness of the Radon--Nikodym derivative  it follows
\be \nonumber
	\widetilde{f}_{\cup_{i=1}^{n} A_i} = \sum_{i=1}^{n} \widetilde{f}_{A_i}  \quad P_Y\text{-a.e.}
\ee
Since there exist only a countable number of finite collection of sets of $\A$  we can find a set $E \subset E_1$ of $P_Y$-measure one such that the normalized set function $\widetilde{f}_{\cdot}(y) \colon  \A \rightarrow [0,1]$ is finitely additive on $E$.

  These facts altogether show there exists a set $E \in \sa_Y$ with $P_Y$-measure one where for all $y \in E$,
\begin{enumerate}
 \item  $0 \le \widetilde{f}_A(y) \le 1 \quad \forall A \in \A$, 
 \item  $\widetilde{f}_{\emptyset}(y) = 0$ and $\widetilde{f}_{X \times Y}(y) = 1$, and
\item for any finite collection $\{ A_i \}_{i=1}^n$ of disjoint sets of $\A$ we have $\widetilde{f}_{\cup_{i=1}^{n} A_i}(y) = \sum_{i=1}^{n} \widetilde{f}_{A_i}(y)$.
\end{enumerate}
Thus the set function $\widetilde{f}\colon E  \times \A \rightarrow [0,1]$ satisfies the condition that $\widetilde{f}(y,\cdot)$ is a probability measure on the algebra  $\A$.  By the Caratheodory extension theorem there exist a unique extension of $\widetilde{f}(y,\cdot)$ to a probability measure \mbox{$\hat{f}(y,\cdot)\colon \sa_{X \times Y} \rightarrow [0,1]$}.
Now define a set function $f\colon  Y \times \sa_{X \times Y} \to [0,1]$ by 
\be \nonumber
f(y, A) = \left\{
\begin{array}{ll}
\hat{f} (y,A) & \textrm{if $y \in E$}\\
J(A) & \textrm{if $y \notin E$}
\end{array} \right..
\ee
Since each $A \in \sa_{X \times Y}$ can be written as the pointwise limit of an increasing sequence $\{A_n\}_{n=1}^{\infty}$ of sets $A_n \in \A$ it follows that  $f_A = \lim_{n \rightarrow \infty} f_{A_n}$ is measurable.  From this we also obtain the desired commutativity of the diagram
\be \nonumber
\begin{array}{lcl}
	f \circ P_Y(A) &=& \int_Y f_A dP_Y = \int_E f_A dP_Y =\lim_{n \rightarrow \infty} \int_E \widetilde{f}_{A_n} dP_Y \\
	&=& \lim_{n \rightarrow \infty} \int_Y \widetilde{f}_{A_n} dP_Y  \\
	&=&  \lim_{n \rightarrow \infty} J(A_n) \\
	&=& J(A)
\end{array}
\ee
\end{proof}

We can use the result from Theorem~\ref{regularConditional} to obtain a broader understanding of the situation. 

\begin{cor} \label{inferenceExistence}
Let $X$ and $Y$ be countably generated measurable spaces and $J$ a joint distribution on $X \times Y$ with marginal distributions $P_X$ and $P_Y$ on $X$ and $Y$, respectively.  Then there exist  $\prob$ arrows $f$ and $g$
such that the diagram
  \begin{equation}  \nonumber
 \begin{tikzpicture}[baseline=(current bounding box.center)]
 	\node	(1)	at	(0,0)		         {$1$};
	\node	(Y)	at	(3,-4)	               {$Y$};
	\node	(X)	at	(-3,-4)	               {$X$};
	\node        (XY)  at      (0,-2)               {$X \times Y$};
	
	\draw[->,right] (1) to node  {$P_Y$} (Y);
	\draw[->,left] (1) to node  {$P_X$} (X);
	\draw[->,left] (1) to node {$J$} (XY);
	\draw[->,above] (XY) to node {$\delta_{\pi_{Y}}$} (Y);
	\draw[->,above] (XY) to node {$\delta_{\pi_{X}}$} (X);
	\draw[->] (X) to [out=0,in=180,looseness=.5] (Y);
	\draw[->, below,out=210,in=-30,looseness=.5] (Y) to node {$\delta_{\pi_X} \circ f$} (X);
	\draw[->,dashed] (X) to [out=20,in=250,looseness=.5] (XY);
	\draw[->,dashed] (Y) to [out=160,in=290,looseness=.5] (XY);
         \draw (0.7, -3.2) node {$f$};
          \draw (-.7, -3.2) node {$g$};

         \draw (0, -3.7) node {$\delta_{\pi_Y} \circ g$};
 \end{tikzpicture}
 \end{equation}
commutes and 
\be \nonumber
\int_U (\delta_{\pi_Y} \circ g)_V \, dP_X = J(U \times V) = \int_V (\delta_{\pi_X} \circ f)_U \,dP_Y.
\ee

\end{cor}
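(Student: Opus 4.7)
The plan is to apply Theorem~\ref{regularConditional} twice, once for each projection, and then derive the integral identities by a judicious choice of test sets.

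First, since $J$ is a joint distribution on $X \times Y$ with marginal $P_Y = \delta_{\pi_Y} \circ J$, Theorem~\ref{regularConditional} directly produces a $\prob$-arrow $f \colon Y \to X \times Y$ satisfying $\delta_{\pi_Y} \circ f = 1_Y$ (commutativity) and $f \circ P_Y = J$. The inference map $Y \to X$ in the diagram is then simply the composite $\delta_{\pi_X} \circ f$. By a wholly symmetric invocation of the theorem, swapping the roles of $X$ and $Y$ (and using the product $(X \times Y, \pi_X, \pi_Y) \cong (Y \times X, \pi_Y, \pi_X)$ in $\M$), I obtain $g \colon X \to X \times Y$ with $\delta_{\pi_X} \circ g = 1_X$ and $g \circ P_X = J$, yielding the other inference map $\delta_{\pi_Y} \circ g \colon X \to Y$.

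For the integral identity, I would start from the defining equation for $f$ in Theorem~\ref{regularConditional}:
\[
\int_{A \times B} (\delta_{\pi_Y})_C\,dJ = \int_C f_{A \times B}\,dP_Y \qquad \forall\, A \in \sa_X,\, B \in \sa_Y,\, C \in \sa_Y.
\]
Observe that $(\delta_{\pi_Y})_C(x,y) = \ch_{\pi_Y^{-1}(C)}(x,y) = \ch_C(y)$, so the left-hand side equals $J\bigl((A \times B) \cap (X \times C)\bigr) = J(A \times (B \cap C))$. Specializing to $B = Y$, $A = U$, $C = V$ collapses this to
\[
J(U \times V) = \int_V f_{U \times Y}\,dP_Y.
\]
It then remains to identify $f_{U \times Y}$ with $(\delta_{\pi_X} \circ f)_U$, which is a direct calculation: by definition of composition and of $\delta_{\pi_X}$,
\[
(\delta_{\pi_X} \circ f)(U \mid y) = \int_{X \times Y} \delta_{\pi_X}(U \mid z)\,df_y(z) = \int_{X \times Y} \ch_{\pi_X^{-1}(U)}\,df_y = f(U \times Y \mid y).
\]
Substituting gives $J(U \times V) = \int_V (\delta_{\pi_X} \circ f)_U\,dP_Y$, and the symmetric manipulation with $g$ yields the other half of the claim.

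The only real step requiring care is the translation between the theorem's measurable-set-based integral identity and the integral equation stated in the corollary, which is just a matter of picking the correct $A,B,C$ and simplifying the Dirac composition. All other pieces (commutativity, existence, the symmetric argument) transfer directly from Theorem~\ref{regularConditional} without additional measure-theoretic work, so I do not anticipate any real obstacle beyond bookkeeping.
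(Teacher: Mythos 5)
Your proposal is correct and follows essentially the same route as the paper: apply Theorem~\ref{regularConditional} twice to obtain $f$ and $g$ with $f\circ P_Y = J$ and $g\circ P_X = J$, identify $(\delta_{\pi_X}\circ f)_U(y) = f_y(U\times Y)$, and extract $J(U\times V)=\int_V f_{U\times Y}\,dP_Y$ from the theorem's integral identity. The only cosmetic difference is that you specialize the displayed identity at $A=U$, $B=Y$, $C=V$, while the paper cites the equivalent relation $\int_V f_{U\times Y}\,dP_Y = J(U\times Y\cap\pi_Y^{-1}V)$ directly.
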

\begin{proof}  From Theorem~\ref{regularConditional} there exist a $\prob$ arrow $Y \stackrel{f}{\longrightarrow} X \times Y$ satisfying $J =f \circ P_Y$. Take the composite  $\delta_{\pi_X} \circ f$ and note  $(\delta_{\pi_X} \circ f)_U(y) = f_y(U \times Y)$ giving
\be \nonumber
\begin{array}{lcl}
\int_V (\delta_{\pi_X} \circ f)_U dP_Y &=& \int_V   f_{U \times Y}  dP_Y \\
&=& J(U \times Y \cap \pi_{Y}^{-1}V) \\
&=& J(U \times V)
\end{array}
\ee

Similarly using a $\prob$ arrow $X \stackrel{g}{\longrightarrow} X \times Y$ satisfying $J =g\circ P_X$ gives 
\be  \nonumber
\int_U (\delta_{\pi_Y} \circ g)_V dP_X = J(U \times V). 
\ee
\end{proof}

Note that if the joint distribution $J$ is \emph{defined} by a probability measure $P_X$ and a conditional $h\colon X \rightarrow Y$ using Diagram~\ref{jointDefinition}, then using the above result and notation it follows $h= \delta_{\pi_Y} \circ g$.

\section{The Bayesian Paradigm using $\prob$}  \label{sec:BayesianModel}

  The categorical paradigm of Bayesian probability can be compactly summarized with as follows. Let $D$ and $H$ be measurable spaces, which model a data and hypothesis space, respectively.  For example, $D$ might be a Euclidean space corresponding to some measurements that are being taken and $H$ a parameterization of some decision that needs to be made. 
 
 \begin{figure} [H]
 \begin{equation} \nonumber
 \begin{tikzpicture}[baseline=(current bounding box.center)]
         \node         (1)    at      (0,2)         {$1$};
	\node	(H)	at	(-2,0)	      {$H$};
	\node	(D)	at	(2,0)               {$D$};	
	\draw[->,above left] (1) to node {$P_H$} (H);
	\draw[->, above] ([yshift=2pt] H.east) to node {$\mathcal{S}$} ([yshift=2pt] D.west);
         \draw[->, below,dashed] ([yshift=-2pt] D.west) to  node {$\mathcal I$} ([yshift=-2pt] H.east);
 \end{tikzpicture}
 \end{equation}
 \caption{The generic Bayesian model.}
 \label{fig:genericBM}
 \end{figure}
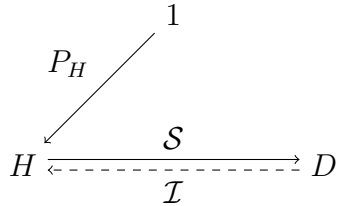
 
The notation $\mcS$ is used to emphasize the fact we think of $\mcS$ as a \emph{sampling distribution} on $D$.  In the context of Bayesian probability the (perfect) probability measure $P_H$ is often called a \emph{prior probability} or, for brevity,  just a \emph{prior}.   Given a prior $P$ and sampling distribution $\mcS$ the joint distribution $J\colon 1 \rightarrow H \times D$ can be constructed using Definition~\ref{jointD}.  Using the marginal $P_D = \mcS \circ P_H$ on $D$ it follows by  Corollary 2.2 there exist an arrow $f\colon D \rightarrow H \times D$ satisfying $J = f \circ P_D$.  Composing this arrow $f$ with the coordinate projection $\delta_{\pi_H}$ gives an arrow $\mcI = \delta_{\pi_H} \circ f\colon  D \rightarrow H$ which we refer to as the inference map, and it satisfies
 \be  \label{productRule}
 \int_{B} \mathcal{I}_{A} \, dP_D = J(A \times B) =  \int_{A} \mathcal{S}_{B} \, dP_H \quad \forall A \in \sa_H, \textrm{ and } \forall B \in \sa_D
 \ee
which is called the product rule.  

With the above in mind we formally  define a {\em Bayesian model}  to consist of 
\begin{enumerate}
	\item[(i)]  two measurable spaces $H$ and $D$ representing hypotheses and data, respectively,
 	\item[(ii)] a probability measure $P_H$ on the $H$ space called the prior probability,
 	\item[(iii)] a $\prob$ arrow $\mathcal{S}\colon H \rightarrow D$ called the sampling distribution,
\end{enumerate}
The sampling distribution $\mcS$ and inference map $\mcI$ are often written as $\p{D\mid Y}$ and $\p{H\mid D}$, respectively, although using the notation $\p{\cdot\mid\cdot}$ for all arrows in the category which are necessarily conditional probabilities is notationally redundant and nondistinguishing (requiring the subscripts to distinguish arrows).  

Given this model and a measurement  $\mu$, which is often just a point mass on $D$ (i.e., $\mu = \delta_d\colon 1 \to D$), there is an update procedure that incorporates this measurement and the prior probability.  Thus the measurement $\mu$ can itself be viewed as a probability measure on $D$, and the  ``posterior'' probability measure can be calculated as $\hat{P}_H =\mcI \circ \mu$ on $H$ provided the measurement $\mu$ is absolutely continuous with respect to $P_D$, which we write as $\mu \ll P_{D}$. Informally, this means that the observed measurement is considered ``possible'' with respect to prior assumptions. 

Let us expand upon this condition $\mu \ll P_{D}$ more closely. We know from Theorem~\ref{regularConditional} that the inference map $\mathcal I$ is uniquely determined by $P_{H}$ and $\mcS$ up to a set of $P_{D}$-measure zero. In general, there is no reason {\em a priori} that an arbitrary (perfect) probability measurement $\mu\colon  1 \to D$ is required to be absolutely continuous with respect to $P_{D}$. If $\mu$ is not absolutely continuous with respect to $P_{D}$, then a different choice of inference map $\mathcal I'$ could yield a different posterior probability---{\em i.e.}, we could have $\mathcal I \circ \mu \neq \mathcal I' \circ \mu$. Thus we make the assumption that measurement probabilities on $D$ are absolutely continuous with respect to the prior probability $P_{D}$ on $D$. 

In practice this condition is often not met.  For example  the probability measure $P_D$ may be a normal distribution on $\mathbb{R}$ and consequently $P_D(\{y\})=0$ for any point $y \in \mathbb{R}$. Since Dirac measurements do not satisfy $\delta_y  \ll P_{D}$, this could create a problem.  However, it is clear that the Dirac measures can be approximated arbitrarily closely by a limiting process of sharply peaked normal distributions which do satisfy this absolute continuity condition.  Thus while the absolute continuity  condition may not be satisfied precisely the error in approximating the measurement by assuming a Dirac measure is negligible. Thus it is standard to assume that measurements belong to a particular class of probability measures on $D$ which are broad enough to approximate measurements and known to be absolutely continuous with respect to the prior. 

In summary, the Bayesian process works in the following way. Given a prior probability $P_{H}$ and sampling distribution $\mathcal S$ one determines the inference map $\mathcal I$. (For computational purposes the construction of the entire map $\mcI$ is in general not necessary.) Once a measurement $\mu\colon  1 \to D$ is taken, we then calculate the posterior probability by $\mathcal I \circ \mu$. This updating procedure can be characterized by the diagram
 \begin{equation} \label{BayesModel}
 \begin{tikzpicture}[baseline=(current bounding box.center)]
         \node         (1)    at       (0,2.5)              {$1$};
	\node	(H)	at	(-2.5,0)	      {$H$};
	\node	(D)	at	(2.5,0)               {$D$};
	\draw[->,above left] (1) to node {$P_H$} (H);
	\draw[->,dashed,above right] (1) to node {$\mu$} (D);
	\draw[->, above] ([yshift=2pt] H.east) to node {$\mathcal S$} ([yshift=2pt] D.west);
         \draw[->, densely dotted] ([yshift=-2pt] D.west) to node [yshift=-6pt] {$\mathcal I$}  ([yshift=-2pt] H.east);
	\draw[->,dashed,right, out=255,in=30] (1) to node[xshift=5pt] {$\mathcal I \circ \mu$} (H);
 \end{tikzpicture}
 \end{equation}
 where the solid lines indicate arrows given {\em a priori}, the dotted line indicates the arrow determined using Theorem~\ref{regularConditional}, and the dashed lines indicate the updating after a measurement. Note that if there is no uncertainty in the measurement, then $\mu=\delta_{\{x\}}$ for some $x \in D$, but in practice there is usually some uncertainty in the measurements themselves.  Consequently the posterior probability must be computed as a composite - so the \emph{posterior probability} of an event $A \in \sa_H$  given a measurement $\mu$ is $(\mcI \circ \mu)(A) = \int_D \mcI_{A}(x) \, d\mu$.
  
 Following the calculation of the posterior probability, the sampling distribution is then updated, if required. The process can then repeat:  using the posterior probability and the updated sampling distribution the updated joint probability distribution on the product space is determined and the corresponding (updated) inference map determined (for computational purposes the ``entire map'' $\mcI$ need not be determined if the measurements are deterministic).  We can then continue to iterate as long as new measurements are received.  For some problems, such as with the standard urn problem with replacement of balls, the sampling distribution does not change from iterate to iterate, but the inference map is updated since the posterior probability on the hypothesis space changes with each measurement.

\begin{remark}  Note that for  countable spaces $X$ and $Y$ the compatibility condition reduces to the standard Bayes equation since for any $x \in X$ the singleton $\{x \} \in \sa_X$ and similarly any element $y \in Y$ implies $\{y \} \in \sa_Y$, so that the joint distribution $J\colon  1\rightarrow X \times Y$ on $\{x\} \times \{y\}$ reduces to the equation
\be 
\mcS(\{y\} \mid x) P_X(\{x\}) = J(\{x\} \times \{y\}) = \mcI(\{x\} \mid y) P_Y(\{y\})
\ee
which in more familiar notation is the Bayesian equation
\be 
	P(y \mid x) P(x) = P(x,y) = P(x \mid y)P(y).
\ee

\end{remark}
  
  \section{Elementary applications of Bayesian probability}

Before proceeding to show how the category $\prob$ can be can be applied to ML where the unknowns are functions, we illustrate its use to solve inference, prediction, and decision processes in the more familiar setting where the unknown parameter(s)  are real values. We present two  elementary problems illustrating  basic model building using  categorical diagrams, much like that used in probabilistic graphical models for Bayesian networks, which can serve to clarify the modeling aspect of any probabilistic problem.

To illustrate the inference-sampling distribution relationship and how we make computations in the category $\prob$,  we  consider first an urn problem where we have  discrete $\sigma$-algebras.  The discreteness condition is not critical as we will eventually see - it only makes the analysis and \emph{computational} aspect easier. 

\begin{example} \label{urnExample}
\textbf{Million dollar draw}.\footnote{ This problem is taken from Peter Green's tutorial on Bayesian Inference which can be viewed at http://videolectures.net/mlss2011\_green\_bayesian.}
 
 \tikzset{redball/.style={%
	circle,
	shading=ball,
	ball color=red,
  	minimum size=.5cm,
	text = white
	},
	blueball/.style={%
	circle,
	shading=ball,
	ball color = blue,
	text = white
	}}

\begin{center}
\begin{tikzpicture}
  \draw (-.5,0)  rectangle +(3,3);
 \node[redball] at (0,.5) {R};
 \node[blueball] at (1,.5) {B};
 \node[redball] at (2,.5) {R};
 \node[blueball] at (0,1.5) {B};
  \node[redball] at (1,1.5) {R};
  \draw (1,-.5) node {Urn 1};
  \draw (6,-.5) node {Urn 2};

  \draw (4.5,0)  rectangle +(3,3);
  \node[redball] at (5,.5) {R};
 \node[blueball] at (6,.5) {B};
 \node[blueball] at (7,.5) {B};
  \node[blueball] at (5,1.5) {B};
\end{tikzpicture}
\end{center}
 
You are given two draws and if you pull out a red ball you win a million dollars.   You are unable to see the two urns so you don't know which urn you are drawing from and the draw is done without replacement.  The $\prob$ diagram for both inference and calculating sampling distributions is given by

 \begin{equation}  \nonumber
 \begin{tikzpicture}[baseline=(current bounding box.center)]
         \node         (1)     at     (-1.5,2)              {$1$};
	\node	(U)	at	(-3,0)	      {$U$};
	\node	(B)	at	(0,0)               {$B$};
	
	\draw[->, left] (1) to node {$P_U$} (U);
	\draw[->,above] ([yshift=2pt] U.east) to node {$\mcS$} ([yshift = 2pt] B.west);
         \draw[->,below,dashed] ([yshift = -2pt] B.west) to  node {$\mcI$} ([yshift=-2pt] U.east);
         \draw[->,right,dashed] (1) to node {$P_B$} (B);

 \end{tikzpicture}
 \end{equation}
 where the dashed arrows indicate morphisms to be calculated rather than morphisms determined by modeling,
 \be \nonumber
 \begin{array}{l}
 U=\{u_1, u_2\} = \textrm{\{Urn 1, Urn 2\}} \\
 B=\{b,r\} = \textrm{\{blue, red\}}
 \end{array}
 \ee
and 
\be  \nonumber
P_U = \frac{1}{2} \delta_{u_1} + \frac{1}{2} \delta_{u_2}.
\ee
The sampling distribution is the binomial distribution given by 
\be  \nonumber
\begin{array}{ll}
\mcS(\{b\} \mid u_1)=\frac{2}{5} & \mcS(\{r\} \mid u_1)=\frac{3}{5} \\
\mcS(\{b\} \mid u_2)=\frac{3}{4} & \mcS(\{r\} \mid u_2)=\frac{1}{4}.
\end{array}
\ee

Suppose that on our first draw, we draw from one of the urns (which one is unknown) and draw a blue ball.  We ask the following questions:
\begin{enumerate}
\item (Inference) What is the probability that we made the draw from Urn 1 (Urn 2)?
\item (Prediction) What is the probability of drawing a red ball on the second draw (from the same urn)?
\item (Decision) Given you have drawn a blue ball on the first draw should you switch urns to increase the probability of drawing a red ball?
\end{enumerate}

To solve these problems, we implicitly or explicitly construct the joint distribution $J$ via the standard construction given $P_U$ and the conditional $\mcS$

\begin{center}
 \begin{tikzpicture}[baseline=(current bounding box.center)]
 	\node	(1)	at	(0,0)		         {$1$};
	\node	(U)	at	(-4,-3)	                  {$U$};
	\node	(B)	at	(4,-3)               {$B$};
	\node        (UB)  at      (0,-3)               {$U \times B$};
	
	\draw[->, left, below, auto] (1) to node  {$$} (U);
	\draw[->,right,below,auto] (1) to node {$P_B=\mcS \circ P_U$} (B);
	\draw[->,left,auto] (UB) to node {$$} (U);
	\draw[->,right,auto] (UB) to node {$\delta_{\pi_{B}}$} (B);
	\draw (-2.3,-1.3) node {$P_U$};                
	\draw (-2.3, -2.7) node {$\delta_{\pi_{U}}$};
	\draw[->,dashed, below,auto] (1) to node {$J$} (UB);
	\draw[->]  (U) to [out=305,in=235,looseness=.5]  (B) ;
	\draw (0,-3.9) node {$\mcS$};
 \end{tikzpicture}
\end{center}
and then construct the inference map by requiring the compatibility condition, {\em i.e.}, the integral equation
\be \label{integralEq}
\int_{u \in U} \mcS(\mathcal{B} | u) dP_U =J(\mathcal{B} \times \mathcal{H}) =  \int_{c \in B} \mcI(\mathcal{H} | c) dP_B \quad \forall \mathcal{B} \in \sa_B \quad \forall \mathcal{H} \in \sa_U
\ee
is satisfied.  Since our problem is discrete the integral reduces to a sum.

Our first step is  to calculate the prior on $B$ which is the composite $P_B = \mcS \circ P_U$, from which we calculate
\be  \nonumber
\begin{array}{lcl}
P_B(\{b\}) &=& (\mcS \circ P_U)(\{b\}) \\
&=& \int_{v \in U} \mcS(\{b\} | v) dP_U \\
&=& \int_{v \in U} \mcS(\{b\} | v) d( \frac{1}{2} \delta_{u_1} + \frac{1}{2} \delta_{u_2}) \\
&=& \mcS(\{b\} | u_1)\cdot P_U(\{u_1\}) + \mcS(\{b\} | u_2)\cdot P_U(\{u_2\}) \\
&=& \frac{2}{5} \cdot \frac{1}{2} + \frac{3}{4} \cdot \frac{1}{2} \\
&=& \frac{23}{40}
\end{array}
\ee
and similarly
\be  \nonumber
P_B(\{r\}) = \frac{17}{40}.
\ee

To solve the \emph{inference} problem,  we need to compute the values of the inference map $\mcI$
using equation~\ref{integralEq}.  This  amounts to computing the joint distribution on all possible measurable sets,
 \be \nonumber
\begin{array}{l}
\int_{\{u_1\}} \mcS( \{b\} | u) dP_U =J( \{u_1\} \times \{b\}) =  \int_{\{b\}} \mcI( \{u_1\} | c) dP_B \\
\int_{\{u_2\}} \mcS( \{b\}  | u) dP_U =J(\{u_2\} \times \{b\}) =  \int_{\{b\}} \mcI(\{u_2\} | c) dP_B \\
\int_{\{u_1\}} \mcS( \{r\} | u) dP_U =J(\{u_1\} \times \{r\} ) =  \int_{\{r\}} \mcI(\{u_1\} | c) dP_B \\
\int_{\{u_2\}} \mcS( \{r\}  | u) dP_U =J(\{u_2\} \times \{r\} ) =  \int_{\{r\}}  \mcI(\{u_2\} | c) dP_B 
\end{array}
\ee
which reduce to the equations
\be  \nonumber
\begin{array}{l}
\mcS(\{b\} | u_1) \cdot P_U(\{u_1\})  = \mcI(\{u_1\} | b) \cdot P_B(\{b\})  \\
\mcS(\{b\} | u_2) \cdot P_U(\{u_2\}) = \mcI(\{u_2\} | b) \cdot P_B(\{b\})  \\
\mcS(\{r\} | u_1) \cdot P_U(\{u_1\})  =  \mcI(\{u_1\} | r) \cdot P_B(\{r\}) \\
\mcS(\{r\} | u_2) \cdot P_U(\{u_2\}) =  \mcI(\{u_2\} | r) \cdot P_B(\{r\}). \\
\end{array}
\ee
Substituting values for $\mcS$, $P_B$, and $P_I$  one determines
\be  \nonumber
\begin{array}{ll}
\mcI( \{u_1\} | b)= \frac{8}{23} & \mcI(\{u_2\} | b)=\frac{15}{23} \\
\\
\mcI(\{u_1\} | r)=\frac{12}{17} & \mcI(\{u_2\} | r)=\frac{5}{17}
\end{array}
\ee
which answers question (1).  The odds that one drew the blue ball from Urn 1 relative to Urn 2 are $\frac{8}{15}$, so it is almost twice as likely that one made the draw from the second urn. 

\textbf{The Prediction Problem.}
Here we implicitly (or explicitly) need to construct the product space $U \times B_1 \times B_2$ where $B_i$ represents the $i^{th}$ drawing of a ball from the same (unknown) urn.  To do this we use the basic construction for joint distributions using a regular conditional probability, $\mcS_2$, which expresses the probability of drawing either a red or a blue ball \emph{from the same urn} as the first draw.  This conditional probability is given by

\be \nonumber
\begin{array}{ll}
\mcS_2( \{b\} | (u_1,b))= \frac{1}{4} & \mcS_2(\{r\} | (u_1,b))=\frac{3}{4} \\
\mcS_2(\{b\} | (u_2,b))=\frac{2}{3} & \mcS_2(\{r\} | (u_2,b))=\frac{1}{3} \\
\mcS_2(\{b\} | (u_1,r))= \frac{1}{2} & \mcS_2(\{r\} | (u_1,r))=\frac{1}{2} \\
\mcS_2(\{b\} | (u_2,r))=1 & \mcS_2(\{r\} | (u_2,r))=0.
\end{array}
\ee

Now we construct the joint distribution $K$ on the product space $(U \times B_1) \times B_2$

\begin{center}
 \begin{tikzpicture}[baseline=(current bounding box.center)]
 	\node	(1)	at	(0,0)		         {$1$};
	\node	(UB1)	at	(-4,-3)	                  {$U \times B_1$};
	\node	(B2)	at	(4,-3)               {$B_2.$};
	\node        (UB12)  at      (0,-3)               {$U \times B_1 \times B_2$};
	
	\draw[->, left, below, auto] (1) to node  {$$} (UB1);
	\draw[->,right,below,auto] (1) to node {$P_{B_2}=\mcS_2 \circ J$} (B2);
	\draw[->,left,auto] (UB12) to node {$$} (UB1);
	\draw[->,right,auto] (UB12) to node {$\delta_{\pi_{B_2}}$} (B2);
	\draw (-2.3,-1.3) node {$J$};                  
	\draw (-2.3, -2.7) node {$\delta_{\pi_{U \times B_1}}$};
	\draw[->,dashed, below,auto] (1) to node {$K$} (UB12);
	\draw[->]  (UB1) to [out=305,in=-135,looseness=.2]  (B2) ;
	\draw (0,-4.1) node {$\mcS_2$};
 \end{tikzpicture}
\end{center}

To answer the prediction question we calculate the odds of drawing a red versus a blue ball.  Thus
\be  \label{prediction}
K(U \times \{b\} \times \{r\}) = \int_{ U \times \{b\}} \mcS_2({\{r\}} | (u,\beta)) dJ,
\ee
where the right hand side follows from the definition (construction) of the iterated product space $(U \times B_1) \times B_2$.  The computation of the expression~\ref{prediction} yields
\be  \nonumber
\begin{array}{lcl}
K(U \times \{b\} \times \{r\}) &=& \int_{ U \times \{b\}} \mcS_2({\{r\}}| (u,\beta)) dJ \\
&=& \underbrace{\mcS( \{r\} | (u_1,b))}_{= \frac{3}{4}} \cdot \underbrace{J(\{u_1\}\times \{b\})}_{=\frac{1}{5}} + \underbrace{\mcS(\{r\} | (u_2,b))}_{=\frac{1}{3} } \cdot \underbrace{J(\{u_2\}\times \{b\})}_{=\frac{3 }{8}} \\
&=& \frac{11}{40}.
\end{array}
\ee
Similarly
$K(U \times \{b\} \times \{b\})=\frac{12}{40}$.  So the odds are
\be  \nonumber
\frac{r}{b} = \frac{11}{12}   \quad Pr( \{r\} | \{b\}) = \frac{11}{23}.
\ee

\textbf{The Decision Problem}
To answer the decision problem we need to consider the conditional probability of switching urns on the second draw which leads to the conditional
\begin{center}
\begin{tikzpicture}[baseline=(current bounding box.center)]
 	\node	(UB1)	at	(0,0)		         {$U \times B_1$};
	\node	(B2)	at	(4,0)	                  {$B_2$};
	\draw[->, right,auto] (UB1) to node  {$\hat{\mcS}_2$} (B2);
 \end{tikzpicture}
\end{center}
given by
\be  \nonumber
\begin{array}{ll}
\hat{\mcS}_2( \{b\} | (u_1,b))= \frac{3}{4} & \hat{\mcS}_2(\{r\} | (u_1,b))=\frac{1}{4} \\
\hat{\mcS}_2(\{b\} | (u_2,b))=\frac{2}{5} & \hat{\mcS}_2(\{r\} | (u_2,b))=\frac{3}{5} \\
\hat{\mcS}_2(\{b\} | (u_1,r))= \frac{3}{4} &\hat{\mcS}_2(\{r\} | (u_1,r))=\frac{1}{4} \\
\hat{\mcS}_2(\{b\} | (u_2,r))=\frac{2}{5} &\hat{ \mcS}_2(\{r\} | (u_2,r))=\frac{3}{5}.
\end{array}
\ee

Carrying out the same computation as above we find the joint distribution $\hat{K}$ on the product space $(U \times B_1) \times B_2$ constructed from $J$ and $\hat{\mcS}_2$ yields
\be   \nonumber
\begin{array}{lcl}
\hat{K}(U \times \{b\} \times \{r\}) &=& \int_{ U \times \{b\}} \hat{ \mcS}_2 (\{r\} | (u,\beta)) dJ \\
&=& \hat{\mcS_2}(\{r\} | (u_1,b)) J(\{u_1\} \times \{b\}) + \hat{\mcS_2}(\{r\} | (u_2,b)) J(\{u_2\} \times \{b\}) \\
&=& \frac{1}{4} \cdot \frac{1}{5} + \frac{3}{5} \cdot \frac{3}{8} \\
&=& \frac{11}{40},
\end{array}
\ee
which shows that it doesn't matter whether you switch or not - you get the same probability of drawing a red ball.

The probability of drawing a blue ball is
\be  \nonumber
\hat{K}(U \times \{b\} \times \{b\}) = \frac{12}{40}=K(U \times \{b\} \times \{b\}),
\ee
so the odds of drawing a blue ball outweigh the odds of drawing a red ball by the ratio $\frac{12}{11}$.  The odds are against you.

\end{example}

Here is an example illustrating that the regular conditional probabilities (inference or sampling distributions) are defined only up to sets of measure zero. 

\begin{example}
We have a rather bland deck of three cards as shown

\tikzset{redcard/.style={%
	rectangle,
	minimum width = 2.5em,
	minimum height = 4.8em,
	text = white,
	fill = red
	},
	greencard/.style={%
	rectangle,
	minimum width = 2.5em,
	minimum height = 4.8em,
	text = white,
	fill = green
	}}

\begin{center}
\begin{tikzpicture}[rounded corners]

\draw (0.3,-.5) node {Card 1};
\draw (2.3,-.5) node {Card 2};
\draw(4.4,-.5) node {Card 3};

\draw(-1.5, 1) node {Front};
\draw(-1.5, -2) node {Back};

\node[redcard] at (.3,1) {$R$};
\node[redcard] at (.3,-2) {$R$};

\node[redcard] at (2.3,1) {$R$};
\node[greencard] at (2.3,-2) {$G$};

\node[greencard] at (4.3,1) {$G$};
\node[greencard] at (4.3,-2) {$G$};

\end{tikzpicture}
\end{center}

We shuffle the deck, pull out a card and expose one face which is red.\footnote{ This problem is taken from David MacKays tutorial on Information Theory which can be viewed at $http://videolectures.net/mlss09uk\_mackay\_it/$.}  The prediction question is 

\begin{center}
{\em What is the probability the other side of the card is red?}
\end{center}

To answer this  note that this card problem is identical to the urn problem with urns being cards and balls becoming the colored sides of each card.  Thus we have an analogous model in $\prob$ for this problem.  Let
\be  \nonumber
\begin{array}{l}
C(ard) = \{1,2,3\} \\
F(ace  \, Color) = \{r, g\}.
\end{array}
\ee
We have the $\prob$ diagram

 \begin{equation}  \nonumber
 \begin{tikzpicture}[baseline=(current bounding box.center)]
         \node         (1)     at     (-6,0)              {$1$};
	\node	(C)	at	(-3,0)	      {$C$};
	\node	(F)	at	(0,0)               {$F$};
	
	\draw[->] (1) to node {} (C);
	\draw (-4.5,.4) node {$P_C$};
	\draw[->] (C) to [out=60,   in=130,looseness=.2] (F);
         \draw[->,dashed] (F) to  [out=240, in = 270,looseness=.4] (C);
         \draw[->,dashed] (1) to [out=290, in=270,looseness=.5] (F);
	\draw (-1.5,.8) node {$\mathcal{S}$};        
	\draw (-1.5,-.2) node {$\mathcal{I}$};
	\draw (-2.9,-1.6) node {$P_F$};

 \end{tikzpicture}
 \end{equation}
 with the sampling distribution given by
 \be  \nonumber
 \begin{array}{lcl}
 \mcS(\{r\} | 1)= 1 & \mcS(\{g\} | 1)=0 \\
 \mcS(\{r\} | 2)= \frac{1}{2} & \mcS(\{g\} | 2)=\frac{1}{2} \\
 \mcS(\{r\} | 3)= 0 & \mcS(\{g\} | 3)=1 .\\
\end{array}
\ee
The prior on $C$ is $P_C = \frac{1}{3} \delta_1 + \frac{1}{3} \delta_2 + \frac{1}{3} \delta_3$.
From this we can construct the joint distribution on $C \times F$
\begin{center}
 \begin{tikzpicture}[baseline=(current bounding box.center)]
 	\node	(1)	at	(0,0)		         {$1$};
	\node	(C)	at	(-4,-3)	                  {$C$};
	\node	(F)	at	(4,-3)               {$F.$};
	\node        (CF)  at      (0,-3)               {$C \times F$};
	
	\draw[->, left, below, auto] (1) to node  {$$} (C);
	\draw[->,right,below,auto] (1) to node {$P_F=\mcS \circ P_C$} (F);
	\draw[->,left,auto] (CF) to node {$$} (C);
	\draw[->,right,auto] (CF) to node {$\delta_{\pi_{F}}$} (F);
	\draw (-2.3,-1.3) node {$P_C$};       
	\draw (-2.3, -2.7) node {$\delta_{\pi_{C}}$};
	\draw[->,dashed, below,auto] (1) to node {$J$} (CF);
	\draw[->]  (C) to [out=305,in=235,looseness=.5]  (F) ;
	\draw (0,-3.9) node {$\mcS$};
 \end{tikzpicture}
\end{center}
Using 
\be  \nonumber
J(A \times B) = \int_{n \in A} \mcS(B | n) dP_C,
\ee
we find
\be  \nonumber
\begin{array}{lcl}
J( \{1\} \times \{r\}) = \frac{1}{3} & J(\{1\} \times \{g\}) = 0 \\
J( \{2\} \times \{r\}) = \frac{1}{6} & J(\{2\} \times \{g\}) = \frac{1}{6} \\
J( \{3\} \times \{r\}) = 0 & J(\{3\} \times \{g\}) = \frac{1}{3} .\\
\end{array}
\ee
Now, like in the urn problem, to predict the next draw (flip of the card), it is necessary to add another measurable set $F_2$ and conditional probability $\mcS_2$ and construct the product diagram and joint distribution $K$
\begin{center}
 \begin{tikzpicture}[baseline=(current bounding box.center)]
 	\node	(1)	at	(0,0)		         {$1$};
	\node	(CF1)	at	(-4,-3)	                  {$C \times F_1$};
	\node	(F2)	at	(4,-3)               {$F_2$.};
	\node        (CF12)  at      (0,-3)               {$C \times F_1 \times F_2$};
	\draw[->, left, below, auto] (1) to node  {$$} (CF1);
	\draw[->,right,below,auto] (1) to node {$P_{F_2}=\mcS_2 \circ J$} (F2);
	\draw[->,left,auto] (CF12) to node {$$} (CF1);
	\draw[->,right,auto] (CF12) to node {$\delta_{\pi_{F_2}}$} (F2);
	\draw (-2.3,-1.3) node {$J$};               
	\draw (-2.3, -2.7) node {$\delta_{\pi_{C \times F_1}}$};
	\draw[->,dashed, below,auto] (1) to node {$K$} (CF12);
	\draw[->]  (CF1) to [out=305,in=-135,looseness=.2]  (F2) ;
	\draw (0,-4.1) node {$\mcS_2$};
 \end{tikzpicture}
\end{center}
The twist now arises in that the conditional probability $\mcS_2$ is not uniquely defined -
what are the values 
\be  \nonumber
\mcS_2( \{r\} | (1,g)) = ~? \quad \mcS_2(\{g\} |  (1,g))=~?
\ee
The answer is it doesn't matter what we put down for these values since they have measure $J(\{1\} \times \{g\})=0$.  We can still compute the desired quantity of interest  proceeding forth with these arbitrarily chosen values on the point sets of measure zero.  Thus we choose
\be  \nonumber
\begin{array}{ll}
\mcS_2( \{g\} | (1,r))= 0 & \mcS_2(\{r\} | (1,r))=1 \\
\mcS_2(\{g\} | (1,g))=1 & \mcS_2(\{r\} | (1,g))=0 \quad \textrm{doesn't matter} \\

\mcS_2(\{g\} | (2,r))=1 & \mcS_2(\{r\} | (2,r))=0 \\
\mcS_2(\{g\} | (2,g))=0 & \mcS_2(\{r\} | (2,g))=1 \\

\mcS_2(\{g\} | (3,r))= 0 & \mcS_2(\{r\} | (3,r))=1 \quad \textrm{doesn't matter} \\
\mcS_2(\{g\} | (3,g))=1 & \mcS_2(\{r\} | (3,g))=0.
\end{array}
\ee
We chose the arbitrary values such that $\mcS_2$ is a deterministic mapping which seems appropriate since flipping a given card uniquely determined the color on the other side. 

Now we can solve the prediction problem by computing the joint measure values
\be  \nonumber
\begin{array}{lcl}
K(C \times \{r\} \times \{r\}) &=& \int_{C \times \{r\}} (\mcS_2)_{\{r\}}(n,c) dJ \\
&=& \mcS_2(\{r\} | (1,r)) \cdot J(\{1\} \times \{r\}) + \mcS_2(\{r\} | (2,r)) \cdot J(\{2\} \times \{r\}) \\
&=& 1 \cdot \frac{1}{3} + 0 \cdot \frac{1}{6} \\
&=& \frac{1}{3}
\end{array}
\ee
and
\be  \nonumber
\begin{array}{lcl}
K(C \times \{r\} \times \{g\}) &=& \int_{C \times \{r\}} \mcS_2(\{g\} | (n,c)) dJ \\
&=& \mcS_2(\{g\} | (1,r)) \cdot J(\{1\} \times \{r\}) + \mcS_2(\{g\} | (2,r)) \cdot J(\{2\} \times \{r\}) \\
&=& 0 \cdot \frac{1}{3} + 1 \cdot \frac{1}{6} \\
&=& \frac{1}{6},
\end{array}
\ee
so it is twice as likely to observe a red face upon flipping the card than seeing a green face.  Converting the odds of $\frac{r}{g} = \frac{2}{1}$ to a probability gives $Pr(\{r\}|\{r\})= \frac{2}{3}$.  

\end{example}

To test one's understanding of the categorical approach to Bayesian probability we suggest the following problem.

\begin{example} \textbf{The Monty Hall Problem.}  You are a contestant in a game show in which a prize is hidden behind one of three curtains.  You will win a prize if you select the correct curtain.  After you have picked one curtain but befor the curtain is lifted, the emcee lifts one of the other curtains,  revealing a goat, and asks if you would like to switch from your current selection to the remaining curtain.  How will your chances change if you switch?

There are three components which need modeled in this problem:
\be  \nonumber
\begin{array}{l}
D(oor) = \{1,2,3\} \quad \textrm{The prize is behind this door.}  \\
C(hoice) = \{1,2,3\} \quad \textrm{The door you chose.} \\
O(pend door) = \{1,2,3\} \quad \textrm{The door Monty Hall opens}
\end{array}
\ee
The prior on $D$ is $P_D=\frac{1}{3} \delta_{d_1} + \frac{1}{3}\delta_{d_2}+\frac{1}{3}\delta_{d_3}$.  Your selection of a curtain, say curtain $1$, gives the deterministic measure $P_C = \delta_{C_1}$.  There is a conditional probability from the product space $D \times C$ to $O$

\begin{center}
 \begin{tikzpicture}[baseline=(current bounding box.center)]
 	\node	(1)	at	(0,0)		         {$1$};
	\node	(DC)	at	(-4,-3)	                  {$D \times C$};
	\node	(O)	at	(4,-3)               {$O$};
	\node        (DCO)  at      (0,-3)               {$(D \times C) \times O$};
	
	\draw[->, left, below, auto] (1) to node  {$$} (DC);
	\draw[->,right,below,auto] (1) to node {$P_O=\mcS \circ P_D \otimes P_C$} (O);
	\draw[->,left,auto] (DCO) to node {$$} (DC);
	\draw[->,right,auto] (DCO) to node {$\delta_{\pi_{O}}$} (O);
	\draw (-2.8,-1.3) node {$P_D \otimes P_C$};
	\draw (-2.3, -2.7) node {$\delta_{\pi_{D \times C}}$};
	\draw[->,dashed, below,auto] (1) to node {$J$} (DCO);
	\draw[->]  (DC) to [out=305,in=235,looseness=.5]  (O) ;
	\draw (0,-3.9) node {$\mcS$};
 \end{tikzpicture}
\end{center}
where the conditional probability $\mcS( (i,j),\{k\})$ represents the probability that Monty opens door $k$ given that the prize is behind door $i$ and you have chosen door $j$.  If you have chosen curtain $1$ then we have the partial data given by
\be  \nonumber
\begin{array}{lll}
\mcS( (1,1),\{1\}) = 0 & \mcS( (1,1),\{2\}) = \frac{1}{2} &  \mcS( (1,1),\{2\}) = \frac{1}{2} \\
\mcS( (2,1),\{1\}) = 0 & \mcS( (2,1),\{2\}) = 0 &  \mcS( (2,1),\{3\}) = 1 \\
\mcS( (3,1),\{1\}) = 0 & \mcS( (3,1),\{2\}) = 1 &  \mcS( (3,1),\{3\}) = 0. \\
\end{array}
\ee
Complete the table, as necessary, to compute the inference conditional, $D \times C \stackrel{\mcI}{\longleftarrow} O$, and 
conclude that if Monty opens either curtain $2$ or $3$ it is in your best interest to switch doors.
\end{example}
  
\section{The Tensor Product}
  
Given any function $f\colon X \rightarrow Y$ the graph of $f$ is defined as the set function 
\be  \nonumber
\begin{array}{ccccc}
\Gamma_f &\colon & X & \longrightarrow & X \times Y \\
&\colon & x & \mapsto & (x,f(x)).
\end{array}
\ee
By our previous notation $\Gamma_f = \langle Id_X,f \rangle$.  If $g\colon Y \rightarrow X$ is any function we also refer to the set function
\be  \nonumber
\begin{array}{ccccc}
\Gamma_g &\colon & Y & \longrightarrow & X \times Y \\
&\colon & y & \mapsto & (g(y),y)
\end{array}
\ee
as a graph function.

Any fixed $x \in X$ determines a constant function $\overline{x}\colon Y \rightarrow X$ sending every $y \in Y$ to $x$. These functions are always measurable and consequently determine ``constant'' graph functions $\Gamma_{\overline{x}}\colon Y \rightarrow X \times Y$.  Similarly, every fixed $y \in Y$ determines a constant graph function $\Gamma_{\overline{y}}\colon X \rightarrow X \times Y$. Together, these constant graph functions can be used to define a  $\sigma$-algebra on the set $X \times Y$ which is finer (larger) than the product $\sigma$-algebra $\sa_{X \times Y}$.  Let $X \otimes Y$ denote the set $X \times Y$ endowed with the largest $\sigma$-algebra structure such that all the constant graph functions  $\Gamma_{\overline{x}}\colon X \rightarrow X \otimes Y$ and $\Gamma_{\overline{y}}\colon Y \rightarrow X \otimes Y$ are measurable.   We say this $\sigma$-algebra $X \otimes Y$ is  \emph{coinduced} by the maps $\{\Gamma_{\overline{x}}\colon  X \rightarrow X \times Y\}_{x \in X}$ and  $\{\Gamma_{\overline{y}}\colon  Y \rightarrow X \times Y\}_{y \in Y}$.  Explicitly, this $\sigma$-algebra is given by
\be
	\sa_{X \otimes Y} = \bigcap_{x \in X} {\Gamma_{\overline{x}}}_{\ast}\sa_{Y} \cap \bigcap_{y \in Y}{\Gamma_{\overline y}}_{\ast}\sa_{X},
\ee
where for any function $f\colon  W \to Z$,
\be
	f_{\ast}\sa_{W} = \{C \in 2^{Z} \mid f^{-1}(C) \in \sa_{W} \}.
\ee
This is in  contrast to the smallest $\sigma$-algebra on $X \times Y$, defined in Section~\ref{sec:products} so that the two projection maps $\{\pi_X\colon X \times Y \rightarrow X, \pi_Y\colon X \times Y \rightarrow Y\}$ are measurable. Such a $\sigma$-algebra is said to be \emph{induced} by the projection maps, or simply referred to as the \emph{initial} $\sigma$-algebra.

The following result on coinduced $\sigma$-algebras is used repeatedly.
   
\begin{lemma} \label{coinduced} Let the $\sigma$-algebra of $Y$ be coinduced by a collection of maps $\{f_i \colon  X_i \rightarrow Y \}_{i \in I}$.   Then any map $g\colon Y \rightarrow Z$ is measurable if and only if the composition $g \circ f_i$ is measurable for each $i \in I$.
\end{lemma}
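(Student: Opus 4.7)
The plan is to prove both directions directly from the explicit description of the coinduced $\sigma$-algebra given just before the lemma, namely $\sa_Y = \bigcap_{i \in I} (f_i)_{\ast}\sa_{X_i}$, where $(f_i)_{\ast}\sa_{X_i} = \{B \subseteq Y \mid f_i^{-1}(B) \in \sa_{X_i}\}$.

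For the forward direction, suppose $g \colon Y \to Z$ is measurable. By the very definition of the coinduced $\sigma$-algebra, each $f_i$ is measurable (this is what it means for $\sa_Y$ to be coinduced by the family). Hence each composite $g \circ f_i$ is a composition of two measurable maps and therefore measurable. No subtlety here.

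For the reverse direction, assume $g \circ f_i \colon X_i \to Z$ is measurable for every $i \in I$. Pick an arbitrary $C \in \sa_Z$; I need to show $g^{-1}(C) \in \sa_Y$. For each index $i$, measurability of $g \circ f_i$ gives $(g \circ f_i)^{-1}(C) \in \sa_{X_i}$, and since preimage commutes with composition, $(g \circ f_i)^{-1}(C) = f_i^{-1}(g^{-1}(C))$. This is precisely the condition that $g^{-1}(C)$ belongs to $(f_i)_{\ast}\sa_{X_i}$. As this holds for every $i \in I$, we obtain
\begin{equation*}
g^{-1}(C) \in \bigcap_{i \in I}(f_i)_{\ast}\sa_{X_i} = \sa_Y,
\end{equation*}
which is exactly measurability of $g$.

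There is essentially no obstacle; the result is a bookkeeping consequence of the explicit formula for $\sa_Y$ already displayed, combined with the elementary identity $f_i^{-1}(g^{-1}(C)) = (g \circ f_i)^{-1}(C)$. The only thing to be careful about is to invoke the \emph{explicit} description of the coinduced $\sigma$-algebra rather than its universal-property description, since it is the intersection formula that lets one conclude $g^{-1}(C) \in \sa_Y$ from the pointwise-in-$i$ hypothesis.
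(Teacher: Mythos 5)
Your proof is correct and follows essentially the same route as the paper's, which compresses the whole argument into the single observation that $g^{-1}(B) \in \sa_Y$ if and only if $f_i^{-1}(g^{-1}(B)) \in \sa_{X_i}$ for each $i$; you have simply spelled out both directions using the explicit intersection formula for the coinduced $\sigma$-algebra. No gaps.
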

\begin{proof} Consider the diagram
 \begin{figure}[H]
\begin{center}
 \begin{tikzpicture}[baseline=(current bounding box.center)]
 	\node	(X)	at	(0,0)              {$X_i$};
	\node	(Y)	at	(2,0)	               {$Y$};
	\node	(Z)	at	(2,-2)               {$Z$};

	\draw[->, above] (X) to node  {$f_i$} (Y);
	\draw[->,right] (Y) to node [xshift=0pt,yshift=0pt] {$g$} (Z);
	\draw[->,left] (X) to node [xshift = -5pt] {$g \circ f_i$} (Z);

 \end{tikzpicture}
\end{center}
\end{figure}
\noindent
If $B \in \sa_Z$ then $g^{-1}(B) \in \sa_Y$ if and only if  $f_i^{-1}(g^{-1}(B))  \in \sa_{X}$.
\end{proof}
\noindent
This result is used frequently when $Y$ in the above diagram is replaced by a tensor product space $X \otimes Y$.  For example,
using this lemma it follows that the projection maps $\pi_Y\colon  X \otimes Y \rightarrow Y$ and $\pi_X\colon  X \otimes Y \rightarrow X$  are both measurable because the diagrams in Figure~\ref{fig:tensorProduct} commute.
   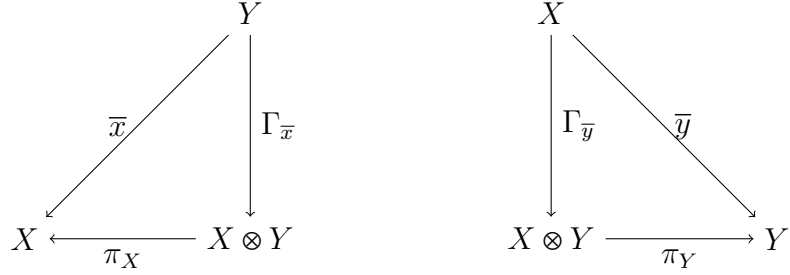
\begin{figure}[H]
  \begin{equation}  \nonumber
 \begin{tikzpicture}[baseline=(current bounding box.center)]
 	\node	(X)	at	(2,0)		         {$X$};
	\node         (Y)   at       (5,-3)               {$Y$};
	\node        (XY)  at      (2,-3)               {$X \otimes Y$};
	
	\draw[->,right] (X) to node {$\overline{y}$} (Y);
	\draw[->,right] (X) to node {$\Gamma_{\overline{y}}$} (XY);
	\draw[->,below] (XY) to node {$\pi_Y$} (Y);

 	\node	(Y2)   	at	(-2,0)		         {$Y$};
	\node         (X2)      at   (-5,-3)              {$X$};
	\node        (XY2)    at      (-2,-3)               {$X \otimes Y$};
	
	\draw[->,left] (Y2) to node {$\overline{x}$} (X2);
	\draw[->,right] (Y2) to node {$\Gamma_{\overline{x}}$} (XY2);
	\draw[->,below] (XY2) to node {$\pi_X$} (X2);

 \end{tikzpicture}
 \end{equation}
 \caption{The commutativity of these diagrams, together with the measurability of the constant functions and constant graph functions, implies the projection maps $\pi_X$ and $\pi_Y$ are measurable.}
 \label{fig:tensorProduct}
 \end{figure} 

By the measurability of the projection maps and the universal property of the product, it follows the identity mapping on the set $X \times Y$ yields a measurable function

   \begin{figure}[H]
  \begin{equation}  \nonumber
 \begin{tikzpicture}[baseline=(current bounding box.center)]
 	\node	(X)	at	(-2,0)		         {$X \otimes Y$};
	\node	(Y)	at	(0,0)	         {$X \times Y$};
	
	\draw[->, above] (X) to node  {$id$} (Y);

 \end{tikzpicture}
 \end{equation}
 \end{figure} 
\noindent
called the \emph{restriction of the $\sigma$-algebra}.
In contrast, the identity function $X \times Y \rightarrow X \otimes Y$ is not necessarily measurable.  Given any probability measure $P$ on $X \otimes Y$ the restriction mapping induces the pushforward probability measure $\delta_{id} \circ P = P( id^{-1}(\cdot))$ on the product $\sigma$-algebra.

\subsection{Graphs of Conditional Probabilities}  The  tensor product of two probability measures $P\colon 1 \rightarrow X$ and $Q\colon 1 \rightarrow Y$ was defined in Equations~\ref{ltensor1} and \ref{rtensor1} as the joint distribution on the product $\sigma$-algebra by either of the expressions
\be  \nonumber
(P \ltensor Q)(\varsigma) = \int_{y \in Y} P(\Gamma_{\overline{y}}^{-1}(\varsigma)) \, dQ \quad \forall \varsigma \in \sa_{X \times Y}
\ee
and
\be  \nonumber
(P \rtensor Q)(\varsigma) = \int_{x \in X} Q(\Gamma_{\overline{x}}^{-1}(\varsigma)) \, dP \quad \forall \varsigma \in \sa_{X \times Y}
\ee
which are equivalent on the product $\sigma$-algebra.  Here we have introduced the new notation of left tensor $\ltensor$ and right tensor $\rtensor$ because we can extend these definitions to be defined on the tensor $\sigma$-algebra though in general the equivalence of these two expressions may no longer hold true.
These definitions can be extended to conditional probability measures $P\colon Z \rightarrow X$ and $Q\colon Z \rightarrow Y$  trivially by conditioning on a point $z \in Z$, 
\be  \label{def1}
(P \ltensor Q)(\varsigma \mid z) = \int_{y \in Y} P(\Gamma_{\overline{y}}^{-1}(\varsigma)) \, dQ_z \quad \forall \varsigma \in \sa_{X \otimes Y}
\ee
and
\be \label{def2}
(P \rtensor Q)(\varsigma \mid z) = \int_{x \in X} Q(\Gamma_{\overline{x}}^{-1}(\varsigma)) \, dP_z \quad \forall \varsigma \in \sa_{X \otimes Y}
\ee
which are equivalent on the product $\sigma$-algebra but not on the tensor $\sigma$-algebra.  However in the special case when $Z=X$ and $P=1_X$, then Equations~\ref{def1} and \ref{def2} do coincide on $\sa_{X \otimes Y}$ because by Equation~\ref{def1}

\be
\begin{array}{lcl}
(1_X \ltensor Q)(\varsigma \mid x) &=& \int_{y \in Y} \underbrace{\delta_{x}(\Gamma_{\overline{y}}^{-1}(\varsigma))}_{= \left\{ \begin{array}{ll} 1 & \textrm{ iff }(x,y) \in \varsigma \\ 0 & \textrm{ otherwise } \end{array} \right.} \, dQ_x   \quad \forall \varsigma \in \sa_{X \otimes Y^X} \\
&=& \int_{y \in Y} \chi_{\Gamma_{\overline{x}}^{-1}(\varsigma)}(y) \, dQ_x \\
&=& Q_x(\Gamma_{\overline{x}}^{-1}(\varsigma)),
\end{array}
\ee
while by Equation~\ref{def2} 
\be  
\begin{array}{lcl}
(1_X \rtensor Q)(\varsigma \mid x) &=&  \int_{u \in X} Q_x(\Gamma_{\overline{u}}^{-1}(\varsigma)) \, d \underbrace{(\delta_{Id_X})_x}_{=\delta_{x}}   \quad \forall \varsigma \in \sa_{X \otimes Y^X} \\
&=& Q_x(\Gamma_{\overline{x}}^{-1}(\U)).
\end{array}
\ee
In this case we denote the common conditional by $\Gamma_Q$, called \emph{the graph of $Q$} by analogy to the graph of a function, and this map gives 
 the commutative diagram in Figure~\ref{fig:graphQ}.
  \begin{figure}[H]
  \begin{equation}   \nonumber
 \begin{tikzpicture}[baseline=(current bounding box.center)]
 	\node	(1)	at	(0,0)		         {$X$};
	\node	(X)	at	(-3,-3)	         {$X$};
	\node         (Y)   at       (3,-3)               {$Y$};
	\node        (XY)  at      (0,-3)               {$X \otimes Y$};
	
	\draw[->, left] (1) to node  {$1_X$} (X);
	\draw[->,right] (1) to node {$Q$} (Y);
	\draw[->,right] (1) to node [yshift=-10pt] {$\Gamma_Q$} (XY);
	\draw[->,below] (XY) to node {$\delta_{\pi_X}$} (X);
	\draw[->,below] (XY) to node {$\delta_{\pi_Y}$} (Y);

 \end{tikzpicture}
 \end{equation}
 \caption{The tensor product of a conditional with an identity map in $\prob$.}
 \label{fig:graphQ}
 \end{figure}
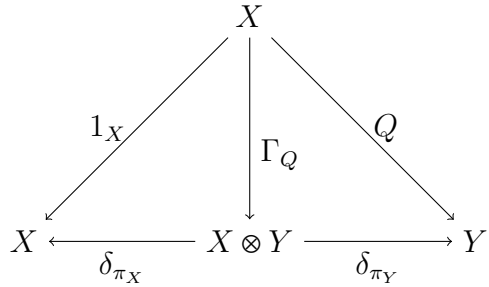 

The commutativity of the diagram in Figure~\ref{fig:graphQ} follows from 
\be
\begin{array}{lcl}
(\delta_{\pi_X} \circ \Gamma_Q)(A\mid x) &=& \int_{(u,v) \in X \otimes Y} \delta_{\pi_X}(A \mid (u,v)) \, d\underbrace{(\Gamma_Q)_x}_{=Q\Gamma_{\overline{x}}^{-1}}  \\
&=&\int_{v \in Y}  \delta_{\pi_X}(A \mid \Gamma_{\overline{x}}(v)) \, dQ_x \\
&=& \int_{v \in Y} \delta_x(A) dQ_x \\
&=& \delta_x(A) \int_{Y} dQ_x \\
&=& 1_X(A \mid x)
\end{array}
\ee
and 
\be
\begin{array}{lcl}
(\delta_{\pi_Y} \circ \Gamma_Q)(B\mid x) &=& \int_{(u,v) \in X \otimes Y} \delta_{\pi_Y}(B \mid (u,v)) \, d((\Gamma_Q)_x) \\
&=&\int_{v \in Y}  \delta_{\pi_Y}(B \mid (x,v)) \, dQ_x \\
&=&\int_{v \in Y}  \chi_{B}(v)  \, dQ_x \\
&=& Q(A \mid x).
\end{array}
\ee

\subsection{A Tensor Product of Conditionals}  \label{sec:tensorP}
Given any conditional $P\colon  Z  \rightarrow Y$ in $\prob$ we can define a tensor product $1_X \otimes P$ by
\be \nonumber
(1_X \otimes P)(\A \mid (x,z)) = P(\Gamma_{\overline{x}}^{-1}(\A) \mid z) \quad \quad \forall  \A \in \sa_{X \otimes Y}
\ee
which makes the diagram in Figure~\ref{fig:tensor} commute and justifies the notation $1_X \otimes P$ (and explains also why the notation  $\Gamma_Q$ for the graph map was used to distinguish it from this map). 
  \begin{figure}[H]
  \begin{equation}   \nonumber
 \begin{tikzpicture}[baseline=(current bounding box.center)]
 	\node	(XZ)	at	(0,0)		         {$X \otimes Z$};
	\node	(X)	at	(-3,0)	         {$X$};
	\node         (Z)   at       (3,0)               {$Z$};
	\node        (XY)  at      (0,-3)               {$X \otimes Y$};
	\node	(X2)	at	(-3,-3)	       {$X$};
	\node         (Y)   at       (3,-3)               {$Y$};
	
	\draw[->, right] (XZ) to node  {$1_{X} \otimes P$} (XY);
	\draw[->,above] (XZ) to node {$\delta_{\pi_X}$} (X);
	\draw[->,above] (XZ) to node {$\delta_{\pi_Z}$} (Z);
	\draw[->,below] (XY) to node {$\delta_{\pi_X}$} (X2);
	\draw[->,below] (XY) to node {$\delta_{\pi_Y}$} (Y);
	\draw[->,right] (X) to node {$1_X$} (X2);
	\draw[->,left] (Z) to node {$P$} (Y);

 \end{tikzpicture}
 \end{equation}
 \caption{The tensor product of conditional $1_X$ and $P$ in $\prob$.}
 \label{fig:tensor}
 \end{figure} 

This tensor product $1_X \otimes P$ essentially 
comes from the diagram
\[
 \begin{tikzpicture}[baseline=(current bounding box.center)]
 	\node	(Z)	at	(0,0)		         {$Z$};
	\node         (Y)   at       (3,0)               {$Y$};
	\node         (XY)  at     (6,0)             {$X \otimes Y$,};
	
	\draw[->, above] (Z) to node  {$P$} (Y);
	\draw[->,above] (Y) to node {$\delta_{\Gamma_{\overline{x}}}$} (XY);

 \end{tikzpicture}
\]
where given a measurable set $\A \in \sa_{X \otimes Y}$ one pulls it back under the constant graph function $\Gamma_{\overline{x}}$ and then applies the conditional $P$ to the pair $(\Gamma_{\overline{x}}^{-1}(\A) \mid z)$.

\subsection{Symmetric Monoidal Categories} A category $\C$ is said to be a  monoidal category  if it possesses the following three properties:
\begin{enumerate}
\item There is a bifunctor 
\be  \nonumber
\begin{array}{lcccc}
\square &\colon & \C \times \C & \rightarrow & \C \\
&\colon _{ob}& (X,Y) & \mapsto & X \square Y \\
&\colon _{ar}& (X,Y) \stackrel{(f,g)}{\longrightarrow} (X',Y') & \mapsto &  X \square Y \stackrel{(f \square g)}{\longrightarrow} X' \square Y'
\end{array}
\ee
which is associative up to isomorphism, 
\be   \nonumber
\square ( \square \times Id_{\C} ) \cong \square (Id_{\C} \times  \square)\colon  \C \times \C \times \C \rightarrow \C
\ee
where $Id_{\C}$ is the identity functor on $\C$.  Hence for every triple $X,Y,Z$ of objects, there is an isomorphism
\be \nonumber
a_{X,Y,Z} \colon  (X \square Y) \square Z \longrightarrow X \square (Y \square Z)
\ee
which is natural in $X,Y,Z$.  This condition is  called  the associativity axiom.
\item There is an object $I \in \C$ such that for every object $X \in_{ob} \C$ there is a left unit isomorphism  
\be \nonumber
l_X\colon  1 \square X \longrightarrow X.
\ee
and a right unit isomorphism 
\be \nonumber
r_X\colon  X \square 1 \longrightarrow X.
\ee
These two conditions are called the unity axioms.
\item For every quadruple of objects $X,Y,W,Z$ the diagram 

  \begin{equation}   \nonumber
 \begin{tikzpicture}[baseline=(current bounding box.center)]
 	\node	(XY)	at	(0,0)		         {$((X \square Y) \square W) \square Z$};
	\node         (YW)   at       (0,-3)               {$(X \square (Y \square W)) \square Z$};
	\node         (YW2)  at     (0,-6)             {$X \square ((Y \square W) \square Z)$};
	\node          (four)  at     (6.5,0)                {$(X \square Y) \square (W \square Z)$};
	\node          (WZ)   at      (6.5,-6)             {$X \square (Y \square (W \square Z))$};
	
	\draw[->, above] (XY) to node  {$a_{X \square Y,W,Z}$} (four);
	\draw[->,above] (YW2) to node {$Id_X \square a_{Y,W,Z}$} (WZ);
	\draw[->,right] (four) to node {$a_{X,Y,W \square Z}$} (WZ);
	\draw[->,left] (XY) to node {$a_{X \square Y,W,Z}$} (YW);
	\draw[->,left] (YW) to node {$a_{X,Y \square W,Z}$}(YW2);

 \end{tikzpicture}
 \end{equation}
commutes. This is called the associativity coherence condition. 
\end{enumerate}

 If $\C$ is a monoidal category under a bifunctor $\square$ and identity $1$ it is denoted \mbox{$(\C,\square,1)$}.  A monoidal category $(\C,\square,1)$ is symmetric if for every pair of objects $X,Y$ there exist an isomorphism 
 \be
 s_{X,Y}\colon  X \square Y \longrightarrow Y \square X 
 \ee
 which is natural in $X$ and $Y$, and the three diagrams in Figure~\ref{fig:symmetric_monoidal} commute.
  \begin{figure}[H]
  \begin{equation}   \nonumber
 \begin{tikzpicture}[baseline=(current bounding box.center)]
 	\node	(XY)	at	(-1,0)		         {$(X \square Y) \square Z$};
	\node         (YW)   at       (-1,-3)               {$X \square (Y \square Z)$};
	\node         (YW2)  at     (-1,-6)                {$Y \square (Z \square X)$};
	\node          (four)  at     (4,0)                {$(Y \square X) \square Z$};
	\node          (five)   at      (4,-3)             {$Y \square (X \square Z)$};
	\node          (six)   at      (4,-6)             {$Y \square (Z \square X)$};

	\draw[->, above] (XY) to node  {$s_{X,Y} \square Id_Z$} (four);
	\draw[->,above] (YW2) to node {$a_{Y,Z,X}$} (six);
	\draw[->,right] (four) to node {$a_{Y,X, Z}$} (five);
	\draw[->,left] (XY) to node {$a_{X ,Y,Z}$} (YW);
	\draw[->,left] (YW) to node {$s_{X,Y \square Z}$}(YW2);
	\draw[->,right] (five) to node {$Id_Y \otimes s_{X,Z}$} (six);
	
	\node (AI) at   (7.5,0)        {$X \square I$};
	\node  (IA) at  (9.5,0)    {$I \square X$};
	\node  (A)  at  (9.5,-2)   {$X$};
	
	\draw[->,above] (AI) to node {$s_{X,1}$} (IA);
	\draw[->,below,left] (AI) to node {$r_{X}$} (A);
	\draw[->,right] (IA) to node {$l_X$} (A);
	
	\node (BI) at   (7.5,-4)        {$X \square Y$};
	\node  (IB) at  (9.5,-4)    {$Y \square X$};
	\node  (B)  at  (9.5,-6)   {$X \square Y$};
	
	\draw[->,above] (BI) to node {$s_{X,Y}$} (IB);
	\draw[->,below,left] (BI) to node {$s_{Y,X}$} (B);
	\draw[->,right] (IB) to node {$Id_X$} (B);

 \end{tikzpicture}
 \end{equation}
\caption{The additional conditions required for a symmetric monoidal category.}
  \label{fig:symmetric_monoidal}
 \end{figure}

The  main example of a symmetric monoidal category is the category of sets, $Set$, under the cartesian product with identity the terminal object $1=\{\star\}$.  
Similarly, for the categories $\M$ and $\prob$, the tensor product $\otimes$ along with the terminal object $1$ acting as the identity element make both $(\M,\otimes,1)$ and $(\prob,\otimes,1)$  symmetric monoidal categories with the above conditions straightforward to verify. This provides a good exercise for the reader new to categorical methods.

\section{Function Spaces}    \label{sec:functions}

For $X,Y \in_{ob} \M$ let $Y^X$ denote the set of all measurable functions from $X$ to $Y$ endowed with the $\sigma$-algebra induced by the set of all point evaluation maps $\{ ev_x\}_{x \in X}$, where 
\be \nonumber
\begin{array}{ccc}
Y^X &\stackrel{ev_x}{\longrightarrow}& Y \\
f & \mapsto & f(x). 
\end{array}
\ee
Explicitly, the $\sigma$-algebra on $Y^{X}$ is given by 
\be
	\sa_{Y^{X}} = \sigma\left( \bigcup_{x\in X} ev_{x}^{-1}\sa_{Y}\right),
\ee
where for any function $f\colon W \to Z$ we have 
\be
	f^{-1}\sa_{Z} = \{ B \in 2^{W} \mid \exists C \in \sa_{Z} \text{ with } f^{-1}(C) = B\}
\ee
and $\sigma(\mathcal B)$ denotes the $\sigma$-algebra generated by any collection $\mathcal B$ of subsets.

Formally we should use an alternative  notation such as  $\fn$  to distinguish between the measurable function \mbox{$f\colon X \rightarrow Y$} and the point \mbox{$\fn\colon 1 \rightarrow Y^X$} of the function space $Y^X$.\footnote{Having defined $Y^X$ to be the set of all measurable functions $f\colon X \rightarrow Y$ it seems contradictory to then define $ev_x$ as acting on ``points'' $\fn\colon  1 \rightarrow Y^X$ rather than the functions $f$ themselves! The apparent self contradictory definition arises because we are interspersing categorical language with set theory; when defining a set function, like $ev_x$, it is implied that it acts on points which are defined as ``global elements'' $1 \rightarrow Y^X$.  A global element is a map with domain $1$. This is the categorical way of defining points rather than using the \emph{elementhood} operator ``$\in$''.    Thus, to be more formal, we could have defined $ev_x$, where $x\colon 1 \rightarrow X$ is any global element, by $ev_x \circ \ulcorner f \urcorner = \ulcorner f(x) \urcorner\colon  1 \rightarrow Y$, where $f(x) = f \circ x$.}
However, it is common practice to let the context define which arrow we are referring to and we shall often follow this practice unless the distinction is critical to avoid ambiguity or awkward expressions.

An alternative notation to $Y^X$ is $\prod_{x \in X} Y_x$ where each $Y_x$ is a copy of $Y$.  The relationship between these representations is that in the former we view the elements as functions $f$ while in the latter we view the elements as the indexed images of a function, $\{f(x)\}_{x \in X}$.  Either representation determines the other since a function is uniquely specified by its values.  

   Because the $\sigma$-algebra structure on tensor product spaces was defined precisely  so that the constant graph functions were all measurable, it follows that in particular the constant graph functions $\Gamma_{\overline{f}}\colon X \rightarrow X \otimes Y^X$ sending $x \mapsto (x,f)$ are measurable. (The graph function symbol $\Gamma_{\cdot}$ is overloaded and will need to be specified directly (domain and codomain) when the context is not clear.)

Define the evaluation function
 \be \label{evDef}
 \begin{array}{ccc}
 X \otimes Y^X & \stackrel{ev_{X,Y}}{\longrightarrow}& Y \\
 (x,f) & \mapsto & f(x)
 \end{array}
 \ee
and observe that for every $\ulcorner f \urcorner  \in Y^X$ the right hand $\M$ diagram in Figure~\ref{fig:SMCC} is commutative as a set mapping, $f = ev_{X,Y} \circ \Gamma_{\overline{f}}$.  

 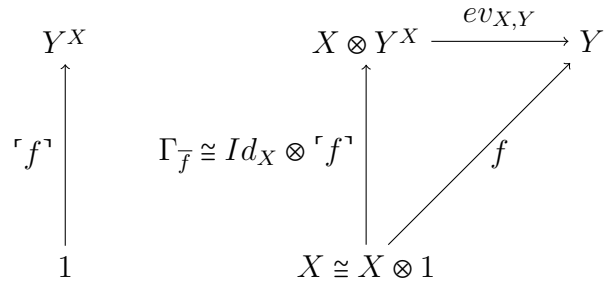
\begin{figure}[H]
 \begin{center}
 \begin{tikzpicture}[baseline=(current bounding box.center)]
 	\node	(X)	at	(0,-3)              {$X \cong X \otimes 1$};
	\node	(XY)	at	(0,0)	               {$ X \otimes Y^X$};
	\node	(Y)	at	(3,0)               {$Y$};
         \node         (1)    at      (-4,-3)             {$1$};
         \node         (YX) at      (-4,0)              {$Y^X$};

	\draw[->, left] (X) to node  {$\Gamma_{\overline{f}}\cong  Id_X \otimes \ulcorner f \urcorner$} (XY);
	\draw[->,below, right] (X) to node [xshift=0pt,yshift=0pt] {$f$} (Y);
	\draw[->,above] (XY) to node {$ev_{X,Y}$} (Y);
	\draw[->,left] (1) to node {$\ulcorner f \urcorner$} (YX);

 \end{tikzpicture}
\end{center}
\caption{The defining characteristic property of the evaluation function $ev$ for graphs. }
\label{fig:SMCC}
\end{figure}

By rotating the diagram in Figure~\ref{fig:SMCC} and also considering the constant graph functions $\Gamma_{\overline{x}}$,  the right hand side of the  diagram in Figure~\ref{fig:SMCC2} also commutes for every $x \in X$.

 \begin{figure}[H]
\begin{center}
 \begin{tikzpicture}[baseline=(current bounding box.center)]
        \node          (X)    at      (-3,0)             {$X$};
 	\node	(YX)	at	(3,0)              {$Y^X$};
	\node	(XY)	at	(0,0)	               {$ X \otimes Y^X$};
	\node	(Y)	at	(0,-3)               {$Y$};

         \draw[->,above] (X) to node {$\Gamma_{\overline{f}}$} (XY);
         \draw[->,left] (X) to node [xshift=-3pt] {$f$} (Y);
	\draw[->, above] (YX) to node  {$\Gamma_{\overline{x}}$} (XY);
	\draw[->,right] (YX) to node [xshift=5pt,yshift=0pt] {$ev_x$} (Y);
	\draw[->,right] (XY) to node {$ev_{X,Y}$} (Y);

 \end{tikzpicture}
\end{center}
\caption{The commutativity of both triangles, the measurability of $f$ and $ev_x$, and the induced $\sigma$-algebra of $X \otimes Y^X$ implies the measurability of $ev$. }
\label{fig:SMCC2}
\end{figure}
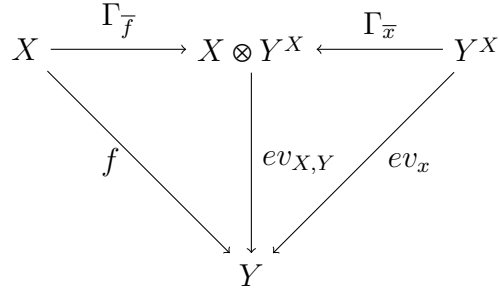

Since $f$ and $\Gamma_{\overline{f}}$ are measurable, as are $ev_{x}$ and $\Gamma_{\overline{x}}$,  it follows by Lemma~\ref{coinduced}  that $ev_{X,Y}$ is measurable since the constant graph functions generate the $\sigma$-algebra of $X \otimes Y^X$.  More generally, given any measurable function $f\colon X \otimes Z \rightarrow Y$ there exists a unique measurable map $\tilde{f} \colon  Z  \rightarrow  Y^X$  defined by $\tilde{f}(z) = \ulcorner f(\cdot,z) \urcorner\colon  1 \rightarrow Y^X$ where $f(\cdot,z)\colon  X \rightarrow Y$ sends $x \mapsto f(x,z)$.
This map $\tilde{f}$ is measurable because the $\sigma$-algebra is generated by the \emph{point evalutation} maps $ev_x$ and the diagram

\[
 \begin{tikzpicture}[baseline=(current bounding box.center)]
        \node          (XZ)    at      (3,-3)             {$X \otimes Z$};
 	\node	(YX)	at	(0,0)              {$Y^X$};
	\node	(Y)	at	(3,0)               {$Y$};
	\node	(Z)	at	(0,-3)	               {$Z$};

         \draw[->,above] (YX) to node {$ev_x$} (Y);
         \draw[->,left] (Z) to node [xshift=-3pt] {$\tilde{f}$} (YX);
	\draw[->, above] (Z) to node  {$\Gamma_{\overline{x}}$} (XZ);
	\draw[->,right] (XZ) to node [xshift=5pt,yshift=0pt] {$f$} (Y);
	\draw[->,right,dashed] (Z) to node {$$} (Y);

 \end{tikzpicture}
\]
commutes so that $\tilde{f}^{-1}( ev_x^{-1}(B)) = (f \circ \Gamma_{\overline{x}})^{-1}(B) \in \sa_Z$.

Conversely given any measurable map $g  \colon  Z \rightarrow Y^X$, it follows the composite 
\be \nonumber
ev_{X,Y} \circ (Id_X \otimes g)
\ee
is a measurable map. This sets up a bijective correspondence between measurable functions denoted by
\[
 \begin{tikzpicture}[baseline=(current bounding box.center)]
         \node  (Z)  at (-.2,0)    {$Z$};
         \node  (YX)  at  (2.7,0)   {$Y^X$};
         \node (X) at   (-.2,-1)  {$X \otimes Z$};
         \node (Y) at  (2.7,-1)  {$Y$};
         \node  (pt1)    at (-1,-.5)   {$$};
         \node  (pt2)    at (3.5,-.5)   {$$};
         \node  (pt3)    at (4.1,.25)   {$$};
         \node  (pt4)    at (4.1,-1.5)   {$$};

	\draw[->,above] (Z) to node  {$\tilde{f}$} (YX);
	\draw[->,below] (X) to node {$f$} (Y);
	
	\draw[-] (pt1) to node {$$} (pt2);
	
	\pgfsetlinewidth{.5ex}
	\draw[<->] (pt3) to node {$$} (pt4);

	 \end{tikzpicture}
\]
or the diagram in Figure~\ref{fig:SMCCDiagram}.

 \begin{figure}[H]
\begin{center}
 \begin{tikzpicture}[baseline=(current bounding box.center)]
 	\node	(X)	at	(0,-3)              {$X \otimes Z$};
	\node	(XY)	at	(0,0)	               {$ X \otimes Y^X$};
	\node	(Y)	at	(3,0)               {$Y$};
         \node         (1)    at      (-4,-3)             {$Z$};
         \node         (YX) at      (-4,0)              {$Y^X$};

	\draw[->, left] (X) to node  {$Id_X \otimes \tilde{f}$} (XY);
	\draw[->,below, right] (X) to node [xshift=0pt,yshift=0pt] {$f$} (Y);
	\draw[->,above] (XY) to node {$ev_{X,Y}$} (Y);
	\draw[->,left] (1) to node {$\tilde{f}$} (YX);

 \end{tikzpicture}
\end{center}
\caption{The evaluation function $ev$ sets up a bijective correspondence between the two measurable maps $f$ and $\tilde{f}$. }
\label{fig:SMCCDiagram}
\end{figure}

The measurable map $\tilde{f}$ is called  the adjunct of $f$ and vice versa, so that $\tilde{\tilde{f}} = f$.  Whether we use the tilde notation for the map $X \otimes Z \rightarrow Y$ or the map $Z \rightarrow Y^X$ is irrelevant, it simply indicates it's the map uniquely determined by the other map.

The map $ev_{X,Y}$, which we will usually abbreviate to simply $ev$ with the pair $(X,Y)$ obvious from context, is called a universal arrow because of this property; it mediates the relationship between the two maps $f$ and $\tilde{f}$.  In the language of category theory
using functors, for a fixed object $X$ in $\M$, the collection of maps $\{ ev_{X,Y}\}_{Y \in_{ob} \M}$ form the components of a natural transformation $ev_{X,-}\colon  (X \otimes \cdot) \circ \_^X \rightarrow Id_{\M}$.
In this situation we say the pair of functors $\{X \otimes \_, \_^X\}$ forms an adjunction denoted $X \otimes \_ \dashv \_^X$.
This adjunction $X \otimes \_ \dashv \_^X$ is the defining property of a  closed category.  We previously showed $\M$ was symmetric monoidal
and combined with the closed category structure we conclude that $\M$ is a  symmetric monoidal closed category (SMCC).  
Subsequently we will show that $\prob$ satisfies a weak version of SMCC, where uniqueness cannot be obtained.

\paragraph{The Graph Map.}
Given the importance of graph functions when working with tensor spaces we define the graph map
\be \nonumber
\begin{array}{ccccc}
\Gamma_{\cdot} &\colon & Y^X  &\rightarrow& (X \otimes Y)^X \\
&\colon & \ulcorner f \urcorner & \mapsto & \ulcorner \Gamma_f \urcorner.
\end{array}
\ee
Thus $\Gamma_{\cdot}(\ulcorner f \urcorner) = \ulcorner \Gamma_f \urcorner$ gives the name of the graph 
 \begin{figure}[H]
\begin{center}
 \begin{tikzpicture}[baseline=(current bounding box.center)]
 	\node	(X)	at	(0,0)		         {$X$};
	\node	(XY)	at	(2,0)	                  {$ X \otimes Y$.};

	\draw[->, above] (X) to node  {$\Gamma_{f}$} (XY);

 \end{tikzpicture}
\end{center}
\end{figure}

The measurability of $\Gamma_{\cdot}$ follows in part from the commutativity of the  diagram in Figure~\ref{fig:graphMap}, where the map $\hat{ev}_{x}\colon  (X \otimes Y)^X \rightarrow X \otimes Y$ denotes the standard point evaluation map sending $g  \mapsto (x,g(x))$.  

 \begin{figure}[H]
\begin{center}
 \begin{tikzpicture}[baseline=(current bounding box.center)]
 	\node	(YX)	at	(-2,0)		  {$Y^X$};
	\node	(XYX) at	(2,0)	           {$(X \otimes Y)^X$};
	\node         (XY)     at  (2,-2)          {$X \otimes Y$};
	\node         (Y)     at  (-2,-2)            {$Y$};

	\draw[->, above] (YX) to node  {$\Gamma_{\cdot}$} (XYX);
	\draw[->,right] (XYX) to node {$\hat{ev}_{x}$} (XY);
	\draw[->,below,left,dashed] (YX) to node [xshift=-3pt] {$\langle \overline{x}, ev_{x} \rangle$} (XY);
	\draw[->, left] (YX) to node {$ev_x$} (Y);
	\draw[->,below] (Y) to node {$\Gamma_{\overline{x}}$} (XY);

 \end{tikzpicture}
\end{center}
\caption{The relationship between  the graph map, point  evaluations, and constant graph maps. }
\label{fig:graphMap}
\end{figure}

We have used the notation $\hat{ev}_{x}$ simply to distinguish this map from the map $ev_{x}$ which has a different domain and codomain.  The $\sigma$-algebra of $(X \otimes Y)^X$ is determined by these point evaluation maps $\hat{ev}_{x}$ so that they are measurable.
The maps $ev_x$ and $\Gamma_{\overline{x}}$ are both measurable and hence their composite $\Gamma_{\overline{x}} \circ ev_x = \langle \overline{x}, ev_x \rangle$ is also measurable.

To prove the measurability of the graph map we use the dual to Lemma~\ref{coinduced} obtained 
 by reversing all the arrows in that lemma to give
\begin{lemma} \label{induced} Let the $\sigma$-algebra of $Y$ be induced by a collection of maps $\{g_i \colon  Y \rightarrow Z_{i} \}_{i \in I}$.   Then any map $f\colon X \rightarrow Y$ is measurable if and only if the composition $g_i \circ f$ is measurable for each $i \in I$.
\end{lemma}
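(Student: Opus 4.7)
The plan is to prove this by dualizing the argument of Lemma~\ref{coinduced}. The forward direction is immediate: since $\sa_{Y}$ is induced by $\{g_i\}_{i \in I}$, each $g_i$ is by definition measurable, so if $f$ is measurable then $g_i \circ f$ is a composition of measurable maps and hence measurable.

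For the reverse direction, I would use the explicit description of the induced (initial) $\sigma$-algebra,
\[
	\sa_{Y} = \sigma\left(\bigcup_{i \in I} g_i^{-1}\sa_{Z_i}\right),
\]
which is the smallest $\sigma$-algebra on $Y$ making every $g_i$ measurable. The strategy is to show $f^{-1}(B) \in \sa_X$ for every $B \in \sa_Y$ by checking membership on a generating family and then extending. For $B = g_i^{-1}(C)$ with $C \in \sa_{Z_i}$, one has
\[
	f^{-1}(B) = f^{-1}(g_i^{-1}(C)) = (g_i \circ f)^{-1}(C),
\]
which lies in $\sa_X$ by hypothesis.

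To promote this pointwise verification on generators to a statement about all of $\sa_Y$, I would invoke the standard fact that $\mathcal{M} := \{B \subseteq Y \mid f^{-1}(B) \in \sa_X\}$ is itself a $\sigma$-algebra, since preimage commutes with complements and countable unions. Since $\mathcal{M}$ contains the generating family $\bigcup_{i\in I} g_i^{-1}\sa_{Z_i}$, it must contain the $\sigma$-algebra it generates, namely all of $\sa_Y$.

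The argument is essentially routine and mirrors the one-line proof of Lemma~\ref{coinduced}; the main subtlety, if any, is simply recognizing that the correct dual gadget here is the characterization of the initial $\sigma$-algebra as being generated by the preimage family $\bigcup_i g_i^{-1}\sa_{Z_i}$, rather than (as in the coinduced case) the pushforward family $\bigcap_i {f_i}_\ast \sa_{X_i}$. Once that identification is made, the diagrammatic verification $f^{-1}g_i^{-1} = (g_i\circ f)^{-1}$ together with the generation principle closes the proof.
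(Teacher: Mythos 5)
Your proposal is correct and follows essentially the same route as the paper: verify measurability on the generating family $\bigcup_i g_i^{-1}\sa_{Z_i}$ via $f^{-1}g_i^{-1} = (g_i\circ f)^{-1}$ and then pass to the generated $\sigma$-algebra. The only difference is that you make explicit (via the good-sets principle that $\{B \mid f^{-1}(B)\in\sa_X\}$ is a $\sigma$-algebra) the extension step that the paper's proof compresses into ``it follows.''
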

\begin{proof} Consider the diagram
 \begin{figure}[H]
\begin{center}
 \begin{tikzpicture}[baseline=(current bounding box.center)]
 	\node	(X)	at	(0,0)              {$X$};
	\node	(Y)	at	(2,0)	               {$Y$};
	\node	(Z)	at	(2,-2)               {$Z_{i}$};

	\draw[->, above] (X) to node  {$f$} (Y);
	\draw[->,right] (Y) to node [xshift=0pt,yshift=0pt] {$g_i$} (Z);
	\draw[->,left] (X) to node [xshift = -5pt] {$g_i \circ f$} (Z);

 \end{tikzpicture}
\end{center}
\end{figure}
\noindent The necessary condition is obvious. Conversely if $g_i \circ f$ is measurable for each $i \in I$ then $f^{-1}(g_i^{-1}(B)) \in \sa_X$.
Because the $\sigma$-algebra $\sa_Y$ is generated by the measurable sets $g_i^{-1}(B)$ it follows that every measurable $U \in \sa_Y$ also satisfies $f^{-1}(U) \in \sa_X$ so $f$ is measurable.  
\end{proof}
\noindent
Applying this lemma to the diagram in Figure~\ref{fig:graphMap} with the maps $g_i$ corresponding to the point evaluation maps $ev_x$ and the map $f$ being the graph map $\Gamma_{\cdot}$ proves the graph map  is indeed measurable.

 The measurability of both of the maps $ev$ and $\Gamma_{\cdot}$ yield corresponding $\prob$ maps $\delta_{ev}$ and $\delta_{\Gamma_{\cdot}}$ that play a role in the construction of sampling distributions defined on any hypothesis spaces that involves function spaces.

\subsection{Stochastic Processes}   \label{sec:Stoch}

Having defined function spaces $Y^X$, we are now in a position to define stochastic processes using categorical language.  The elementary definition given next suffices to develop all the basic concepts one usually associates with traditional ML and allows for relatively elegant proofs.   Subsequently, using the language of functors,  a more general definition will be given and for which the following definition can be viewed as a special instance.

\begin{definition} \label{stochasticDef} A \textbf{stochastic process} is a $\prob$ map 
 \begin{figure}[H]
\begin{center}
 \begin{tikzpicture}[baseline=(current bounding box.center)]
 	\node	(1)	at	(0,0)		         {$1$};
	\node	(YX)	at	(3,0)	                  {$Y^X$};
	\draw[->, above] (1) to node  {$P$} (YX);

 \end{tikzpicture}
\end{center}
\end{figure}
\noindent
representing a probability measure  on the function space $Y^X$.   A \emph{parameterized}  stochastic process is a $\prob$ map
 \begin{figure}[H]
\begin{center}
 \begin{tikzpicture}[baseline=(current bounding box.center)]
 	\node	(Z)	at	(0,0)		         {$Z$};
	\node	(YX)	at	(3,0)	                  {$Y^X$};
	\draw[->, above] (Z) to node  {$P$} (YX);

 \end{tikzpicture}
\end{center}
\end{figure}
\noindent
representing a family of stochastic processes parameterized by $Z$.
\end{definition}

Just as we did for the category $\M$, we seek a bijective correspondence between two $\prob$ maps, a stochastic process $P$ and a corresponding conditional probability measure $\overline{P}$. In the $\prob$ case, however, the two morphisms do not uniquely determine each other, and we are only able to obtain a symmetric monoidal {\em weakly} closed category (SMwCC).

In Section~\ref{sec:tensorP}  the tensor product  $1_X \otimes P$  was defined, and by replacing the space ``$Y$'' in that definition to be a function space $Y^X$  we obtain the tensor product map
\be \nonumber
1_X \otimes P\colon  X\otimes Z  \rightarrow X \otimes Y^X
\ee
given by (using the same formula as in Section~\ref{sec:tensorP})
 \be  \nonumber
(1_X \otimes P)(\U \mid  (x,z)) =  P( \Gamma_{\overline{x}}^{-1}(\U) \mid  z)
 \ee

For a given parameterized stochastic process $P\colon  Z \rightarrow Y^X$ we obtain the tensor product $1_X \otimes P$, and composing this map with the deterministic $\prob$ map determined by the evaluation map we obtain the composite $\overline{P}$ in the diagram  in Figure~\ref{fig:SMCCProb}.
 \begin{figure}[H]
\begin{center}
 \begin{tikzpicture}[baseline=(current bounding box.center)]
 	\node	(X)	at	(0,-3)              {$X \otimes Z$};
	\node	(XY)	at	(0,0)	               {$ X \otimes Y^X$};
	\node	(Y)	at	(3,0)               {$Y$};
         \node         (1)    at      (-4,-3)             {$Z$};
         \node         (YX) at      (-4,0)              {$Y^X$};
         
	\draw[->, left] (X) to node  {$1_X \otimes P$} (XY);
	\draw[->,below, right] (X) to node [xshift=0pt,yshift=0pt] {$\overline{P}$} (Y);
	\draw[->,above] (XY) to node {$\delta_{ev}$} (Y);
	\draw[->,left] (1) to node {$P$} (YX);

 \end{tikzpicture}
\end{center}
\caption{The defining characteristic property of the evaluation function $ev$ for tensor products of conditionals  in $\prob$. }
\label{fig:SMCCProb}
\end{figure}
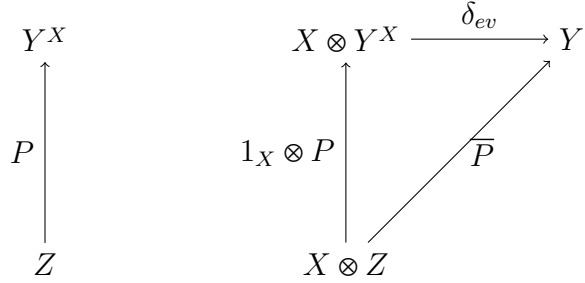

Thus
\be \nonumber
\begin{array}{lcl}
\overline{P}(B \mid (x,z)) &=& \int_{(u,f)\in X \otimes Y^X} {(\delta_{ev})}_B(u,f) \, d(1_X \otimes P)_{(x,z)} \\
&=& \int_{f \in Y^X} \delta_{ev}(B \mid  \Gamma_{\overline{x}}(f)) \, dP_z \\
&=&  \int_{f \in Y^X} \chi_B(ev_{x}(f))  \, dP_z \\
&=& P( ev_{x}^{-1}(B)\mid z)
\end{array}
\ee
and every parameterized stochastic process determines a conditional probability
\[
\overline{P}\colon X \otimes Z \rightarrow Y.
\] 

Conversely, given a conditional probability $\overline P \colon X \otimes Z \to Y$, we wish to define a parameterized stochastic process $P\colon Z \to Y^X$. We might be tempted to define such a stochastic process by letting
\begin{equation}
\label{eq:closed constraint}
	P(ev_x^{-1}(B)\mid z)  = \overline P(B \mid (x,z)),
\end{equation}
but this does not give a well-defined measure for each $z \in Z$. Recall that a probability measure cannot be unambiguously defined on an arbitrary generating set for the $\sigma$-algebra. We can, however, uniquely define a measure on a $\pi$-system\footnote{A $\pi$-system on $X$ is a nonempty collection of subsets of $X$ that is closed under finite intersections.} and then use Dynkin's $\pi$-$\lambda$ theorem to extend to the entire $\sigma$-algebra ({\em e.g.}, see~\cite{Durrett:probability}). This construction requires the following definition.

\begin{definition}
\label{def::unseparated}
Given a measurable space $(X, \sa_X)$, we can define an equivalence relation on $X$ where $x \sim y$ if $x \in A \Leftrightarrow y \in A$ for all $A \in \sa_X$. We call an equivalence class of this relation an {\em atom} of $X$. For an arbitrary set $A \subset X$, we say that $A$ is 
\begin{enumerate}
	\item[$\bullet$] {\em separated} if for any two points $x,y \in A$, there is some $B \in \sa_X$ with $x \in B$ and $y \notin B$
	\item[$\bullet$] {\em unseparated} if $A$ is contained in some atom of $X$.
\end{enumerate}
\end{definition}

This notion of separation of points is important for finding a generating set on which we can define a parameterized stochastic process. The key lemma which we state here without proof\footnote{This lemma and additional work on symmetric monoidal weakly closed structures on $\prob$ will appear in a future paper.} is the following.
\begin{lemma} The class of subsets of $Y^X$
\label{lem::pi-system}
\[
	\mathcal{E} = \emptyset \cup
	\left\{\bigcap_{i = 1}^n ev^{-1}_{x_i}(A_i)
	\quad\middle\mid \quad
	\begin{matrix}
	\{x_i\}_{i=1}^n \text{ is separated in } X,\\
	A_i \in \sa_Y \text{ is nonempty and proper}
	\end{matrix}
	\right\}
\]
 is a $\pi$-system which generates the evaluation $\sigma$-algebra on $Y^X$.
\end{lemma}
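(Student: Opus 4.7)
My plan is to verify the two claims separately: first that $\mathcal{E}$ is closed under finite intersections, and second that $\sigma(\mathcal{E}) = \sa_{Y^X}$. The crux of both arguments is a single observation about atoms: if $x \sim y$ in $X$ (that is, $x$ and $y$ lie in a common atom), then for any measurable $f \colon X \to Y$ and any $A \in \sa_Y$, the set $f^{-1}(A) \in \sa_X$ must contain $x$ iff it contains $y$, so $f(x) \in A \iff f(y) \in A$. Hence $ev_x^{-1}(A) = ev_y^{-1}(A)$ as subsets of $Y^X$ whenever $x \sim y$. This is the tool that lets us collapse unseparated indexing sets back down to separated ones.

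For the $\pi$-system property, take two nonempty elements of $\mathcal{E}$:
\[
E = \bigcap_{i=1}^n ev_{x_i}^{-1}(A_i), \qquad F = \bigcap_{j=1}^m ev_{y_j}^{-1}(B_j),
\]
with $\{x_i\}$ and $\{y_j\}$ each separated and all $A_i,B_j$ nonempty proper. The combined list $\{x_i\} \cup \{y_j\}$ need not be separated, so I partition it into atom-equivalence classes, choose one representative $z_\ell$ from each class, and use the atom observation to rewrite each $ev_{x_i}^{-1}(A_i)$ and $ev_{y_j}^{-1}(B_j)$ as $ev_{z_\ell}^{-1}(A_i)$ or $ev_{z_\ell}^{-1}(B_j)$ for the appropriate $\ell$. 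Collecting terms gives $E \cap F = \bigcap_\ell ev_{z_\ell}^{-1}(C_\ell)$, where each $C_\ell$ is a finite intersection of some of the original $A_i$'s and $B_j$'s. Each $C_\ell$ lies in $\sa_Y$ and is automatically proper since it is contained in a proper set. If some $C_\ell = \emptyset$, then $E \cap F = \emptyset \in \mathcal{E}$; otherwise every $C_\ell$ is nonempty and proper, the representatives $\{z_\ell\}$ are separated by construction, and $E \cap F \in \mathcal{E}$.

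For the generation claim, one inclusion is immediate since every member of $\mathcal{E}$ is a finite intersection of sets $ev_{x_i}^{-1}(A_i) \in \sa_{Y^X}$, so $\mathcal{E} \subseteq \sa_{Y^X}$ and hence $\sigma(\mathcal{E}) \subseteq \sa_{Y^X}$. For the reverse inclusion, it suffices to show that every generator $ev_x^{-1}(A)$, with $x \in X$ and $A \in \sa_Y$, lies in $\sigma(\mathcal{E})$. Split into cases on $A$: if $A$ is nonempty and proper, then the singleton $\{x\}$ is trivially separated so $ev_x^{-1}(A) \in \mathcal{E}$; if $A = \emptyset$, then $ev_x^{-1}(A) = \emptyset \in \mathcal{E}$; and if $A = Y$, then $ev_x^{-1}(A) = Y^X$, which is the complement of $\emptyset \in \mathcal{E}$ and hence lies in $\sigma(\mathcal{E})$. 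Thus every generator of $\sa_{Y^X}$ is in $\sigma(\mathcal{E})$, giving equality.

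The main obstacle is the $\pi$-system argument: one has to see why insisting on \emph{separated} indexing sets and \emph{proper} target sets is actually compatible with taking intersections, and this is precisely where the atom observation and the closure of "proper" under intersection with any set come in. Once that observation is isolated, the rest is bookkeeping on the index set, and the generation statement is a straightforward case check.
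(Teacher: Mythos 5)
The paper states Lemma~\ref{lem::pi-system} without proof (a footnote defers the argument to a future paper), so there is no in-paper proof to compare against; your argument has to stand on its own, and it does. The one idea that everything hinges on is exactly the observation you isolate: if $x\sim y$ lie in the same atom of $X$, then for every \emph{measurable} $f\colon X\to Y$ and every $A\in\sa_Y$ the measurable set $f^{-1}(A)$ contains $x$ iff it contains $y$, so $ev_x^{-1}(A)=ev_y^{-1}(A)$ as subsets of $Y^X$. This is what makes the restriction to separated index sets lossless, and the rest of your $\pi$-system argument (collapse the combined index set to one representative per atom, merge the target sets via $ev_z^{-1}(A)\cap ev_z^{-1}(B)=ev_z^{-1}(A\cap B)$, and fall back to $\emptyset\in\mathcal{E}$ when some merged target is empty) is correct bookkeeping; note that properness of the merged $C_\ell$ is automatic, as you say. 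The generation claim is also handled correctly: the only generators $ev_x^{-1}(A)$ not literally in $\mathcal{E}$ are those with $A=\emptyset$ or $A=Y$, and these give $\emptyset$ and $Y^X=\emptyset^{c}$ respectively, both in $\sigma(\mathcal{E})$. I see no gap.
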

We can now define many parameterized stochastic processes ``adjoint'' to $\overline{P}$, with the only requirement being that Equation~\ref{eq:closed constraint} is satisfied. This is not a deficiency in $\prob$, however, but rather shows that we have ample flexibility in this category. 

\begin{remark}
Even when such an expression does provide a well-defined measure as in the case of finite spaces, it does not yield a unique $P$.  Appendix B provides an elementary example  illustrating the failure of the bijective correspondence property in this case.   Also observe that the proposed defining Equation~\ref{eq:closed constraint} can be extended to 
\[
P( \cap_{i=1}^n ev_{x_i}^{-1}(B_i)\mid z)  = \prod_{i=1}^n  \overline P(B_i \mid (x_i,z))
\]
which does provide a well-defined measure by Lemma~\ref{lem::pi-system}.  However it still does not provide a bijective correspondence which is clear as the right hand side implies an independence condition which a stochastic process need not satisfy.  However it does provide for a bijective correspondence if we \emph{impose} an additional  independence condition/assumption.  Alternatively, by imposing the additional condition that for each $z \in Z$,  $P_z$ is a  Gaussian Processes we can obtain a bijective correspondence. In Section~\ref{sec:jointNormal} we illustrate in detail how a joint normal distribution on a finite dimensional space gives rise to a stochastic process, and in particular a GP.
\end{remark}

Often, we are able to exploit the weak correspondence and use the conditional probability $\overline{P}\colon  X \rightarrow Y$ rather than the stochastic process $P\colon 1 \to Y^X$. While carrying less information, the conditional probability is easier to reason with because of our familiarity with Bayes' rule (which uses conditional probabilities) and our unfamiliarity with measures on function spaces.
 
Intuitively it is easier to work with the conditional probability $\overline{P}$ as we can represent the graph of such functions.  In Figure~\ref{fig:GPDiagram} the top diagram shows a prior probability $P\colon 1 \rightarrow \mathbb{R}^{[0,10]}$, which is a stochastic process, depicted by representing its adjunct illustrating its expected value as well as its $2 \sigma$ error bars on each coordinate.  The bottom diagram in the same figure illustrates a parameterized stochastic process where the parameterization is over four measurements.  Using the above notation, $Z = \prod_{i=1}^4 (X \times Y)_i$ and  $\overline{P}(\cdot \mid  \{(\xv_i,y_i)\}_{i=1}^4)$ is a posterior probability measure given four measurements $\{\xv_i,y_i\}_{i=1}^4$.  These diagrams were generated under the hypothesis that the process is a GP.
 
\begin{figure}[H]
\begin{center}
\includegraphics[width=6in,height=4in]{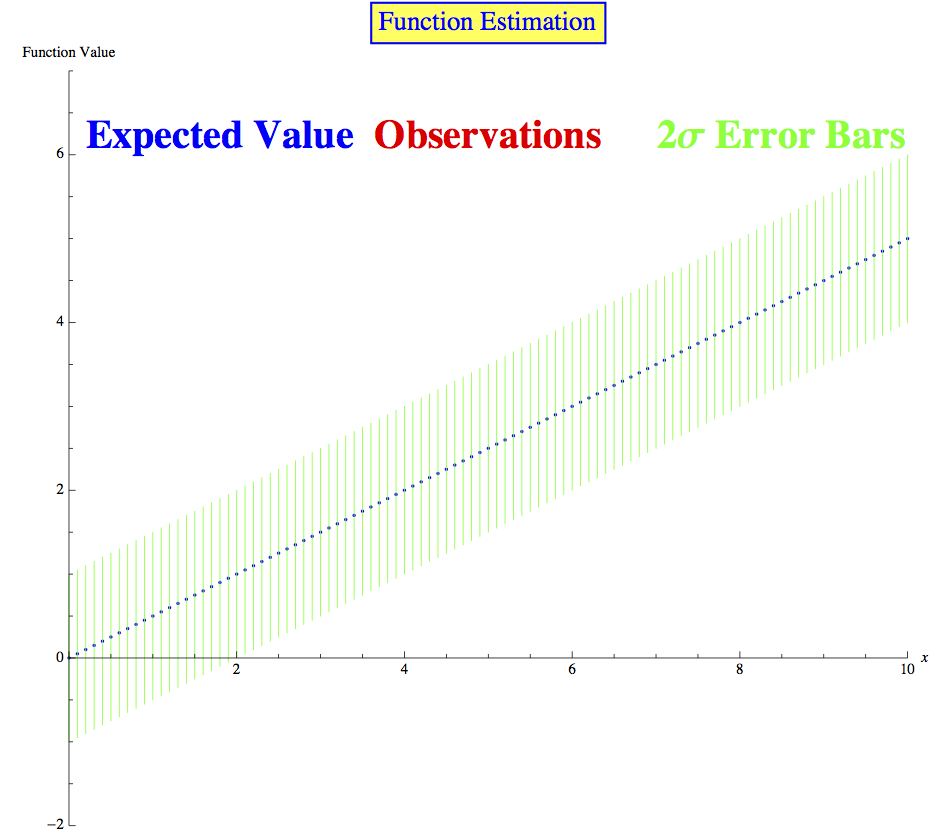}
\includegraphics[width=6in,height=4in]{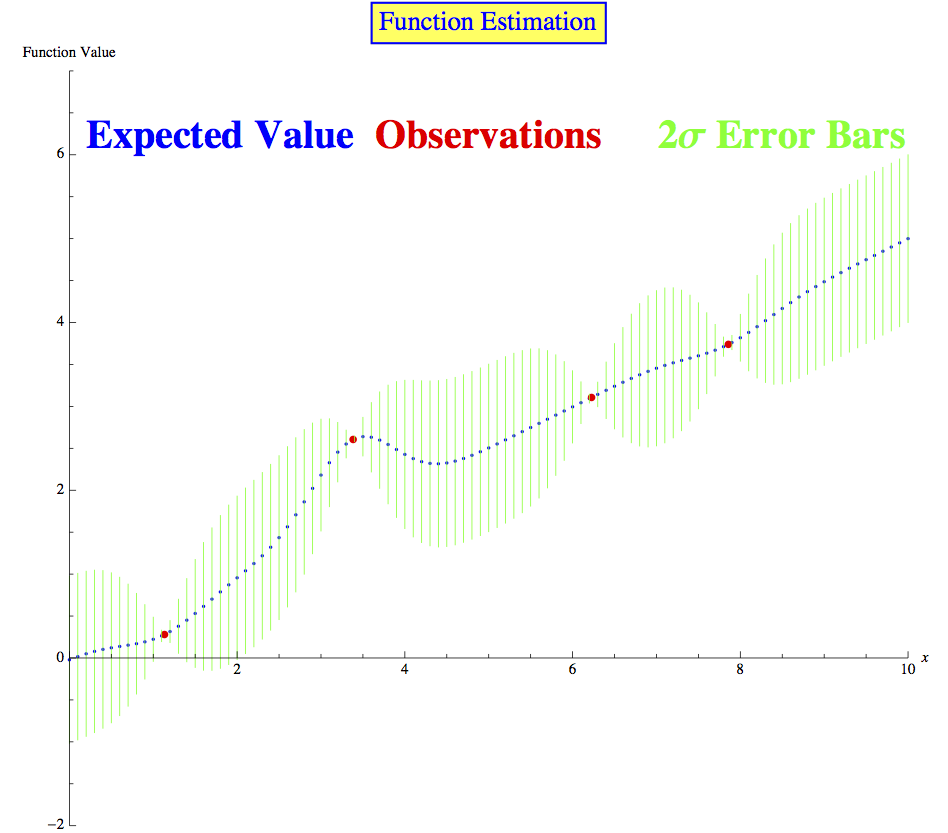}

\caption{\textsl{The top diagram shows a (prior) stochastic process represented by its adjunct \mbox{$\overline{P}\colon  [0,10] \rightarrow \mathbb{R}$} and characterized by its expected value and covariance.  The bottom diagram shows a parameterized stochastic process (the same process), also expressed by its adjunct, where the parameterization is over four measurements.}}
\end{center}
\label{fig:GPDiagram}
\end{figure}

\subsection{Gaussian Processes}  \label{sec:GP}
  To further explicate the use of stochastic processes we consider the special case of a stochastic process that has proven to be of extensive use for modeling in ML problems.  To be able to compute integrals, notably expectations,  we will assume hereafter that 
$Y = \mathbb{R}$ and $X=\mathbb{R}^n$ for some integer $n$, or a compact subset thereof with the standard Borel $\sigma$-algebras.  We use the bold notation $\xv$ to denote a vector in $X$. Because ML applications often simply stress scalar valued functions, we  have take $Y=\mathbb{R}$ and write elements in $Y$ as $y$.  At any rate, the generalization to an arbitrary Euclidean space amounts to carrying around vector notation and using vector valued integrals in the following.

For any finite subset $X_0 \subset X$ the set $X_0$ can be given the subspace $\sigma$-algebra which is the induced  $\sigma$-algebra of the inclusion map $\iota\colon  X_0 \hookrightarrow X$. Given any measurable $f\colon X \rightarrow Y$ the restriction of $f$ to $X_0$ is $f|_{X_0} =f \circ \iota$ and ``substitution'' of an element $\xv \in X$ into $f|_{X_0}$ is precomposition by the point $\xv\colon 1 \rightarrow X$ giving the commutative  $\M$  diagram in Figure~\ref{fig:inclusion}, where the  composite $f|_{X_0}(\xv) = f \circ \iota \circ \xv$ is equivalent to the map $ev_{\xv}(\fn)\colon  1 \rightarrow Y$. 
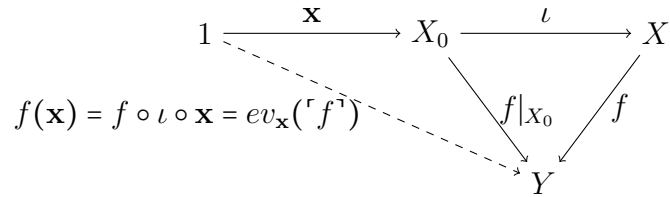
\begin{figure}[H]
 \begin{equation}  \nonumber
 \begin{tikzpicture}[baseline=(current bounding box.center)]
         
         \node         (1)          at     (3,0)      {$1$};
         \node         (X02)    at      (6,0)         {$X_0$};
	\node	(X2)	at	(9,0)	      {$X$};
	\node	(Y2)	at	(7.5,-2)               {$Y$};	
	\draw[->,above] (X02) to node {$\iota$} (X2);
	\draw[->,above] (1) to node {$\xv$} (X02);
	\draw[->, right] (X2) to node {$f$} (Y2);
         \draw[->,left, dashed ] (1) to  node [yshift = -3pt] {$f(\xv) = f \circ \iota \circ \xv = ev_{\xv}(\fn)$} (Y2);
         \draw[->,right] (X02) to node {$f|_{X_0}$} (Y2);

 \end{tikzpicture}
 \end{equation}
 \caption{The substitution/evaluation relation.}
 \label{fig:inclusion}
 \end{figure}

 Thus the inclusion map $\iota$ induces a measurable map 
 \be  \nonumber
 \begin{array}{lclcl}
 Y^{\iota} &\colon & Y^X & \rightarrow & Y^{X_0} \\
 &\colon & \fn & \mapsto & \ulcorner f \circ \iota \urcorner,
 \end{array}
 \ee
 which in turn induces the deterministic map $\delta_{Y^{\iota}}\colon  Y^X \rightarrow Y^{X_0}$ in $\prob$.   For any probability measure $P$ on the function space $Y^X$, we have the composite of $\prob$ arrows shown in the left diagram of Figure~\ref{fig:GPdef}.  For a singleton set $X_0=\{\xv\}$  this diagram reduces to the diagram on the right in Figure~\ref{fig:GPdef}.
 
 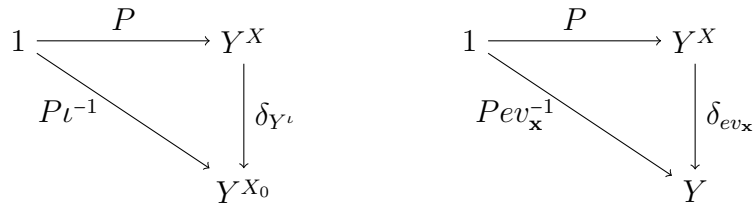
\begin{figure}[H]
  \begin{equation} \nonumber
 \begin{tikzpicture}[baseline=(current bounding box.center)]
         \node         (1)    at      (-1,0)         {$1$};
	\node	(YX)	at	(2,0)	      {$Y^X$};
	\node	(YX0)	at	(2,-2)               {$Y^{X_0}$};
	\draw[->,above] (1) to node {$P$} (YX);
	\draw[->, right] (YX) to node {$\delta_{Y^{\iota}}$} (YX0);
         \draw[->,left ] (1) to  node [xshift = -5pt]{$P \iota^{-1}$} (YX0);
         
         \node         (12)    at      (5,0)         {$1$};
	\node	(YX2)	at	(8,0)	      {$Y^X$};
	\node	(YX02)	at	(8,-2)               {$Y$};
	\draw[->,above] (12) to node {$P$} (YX2);
	\draw[->, right] (YX2) to node {$\delta_{ev_{\xv}}$} (YX02);
         \draw[->,left ] (12) to  node [xshift = -5pt]{$P ev_{\xv}^{-1}$} (YX02);
 \end{tikzpicture}
 \end{equation}
 \caption{The defining property of a Gaussian Process is the commutativity of a $\prob$ diagram.}
 \label{fig:GPdef}
 \end{figure}

Given $m \in Y^X$ and $k$ a bivariate function $k\colon X \times X \rightarrow \mathbb{R}$,
let $m|_{X_0} = m \circ \iota \in Y^{X_0}$ denote the restriction of $m$ to $X_0$ and similiarly let $k|_{X_0} = k \circ (\iota \times \iota)$ denote the restriction of $k$ to $X_0 \times X_0$.

 \begin{definition} A \textbf{Gaussian process} on $Y^X$ is a probability measure $P$ on the function space $Y^X$, denoted $P \sim \GP(m,k)$, such that \emph{for all finite subsets} $X_0$ of $X$  the push forward probability measure $P \iota^{-1}$ is a (multivariate) Gaussian distribution denoted \mbox{$P \iota^{-1} \sim \NN(m|_{X_0},k|_{X_0})$}.  
 \end{definition}
A bivariate function $k$ satisfying the condition in the definition is called the \emph{covariance function} of the Gaussian process $P$ while the function $m$ is the \emph{expected value}.   A Gaussian process is completely specified by its mean and covariance functions.  These two functions are defined \emph{pointwise} by
\be  \label{mean}
m(\xv) \triangleq \E_P[ev_{\xv}] = \int_{f \in Y^X} (ev_{\xv})( \fn) \, dP = \int_{f \in Y^X} f(\xv) \, dP
\ee
and by the vector valued integral
\be   \label{covariance}
\begin{array}{lcl}
k(\xv,\xv') & \triangleq &  \E_{P}[ (ev_{\xv} - \E_P[ev_{\xv}])  (ev_{\xv'} - \E_P[ev_{\xv'}]) ] \\
&=& \displaystyle{ \int_{f \in Y^X}} \left( f(\xv) - m(\xv) \right)^T \left( f(\xv') - m(\xv') \right)  \, dP.
\end{array}
\ee

Abstractly, if $P$ is given, then we could determine $m$ and $k$ by these two equations.  However in practice it is the two functions, $m$ and $k$  which are used to specify a GP $P$ rather than $P$ determining $m$ and $k$.  For general stochastic processes higher order moments $\E_{P}[ev_{\xv}^j]$, with $j>1$,   are necessary to characterize the process.  

For the covariance function $k$ we make the following assumptions for all $\xv,\zv \in X$,

\begin{enumerate}
\item  $k(\xv,\zv) \ge 0$,
\item $k(\xv,\zv) = k(\zv,\xv)$, and
\item $k(\xv,\xv) k(\zv,\zv) - k(\xv,\zv)^2 \ge 0$.
\end{enumerate}

\subsection{GPs via Joint Normal Distributions\footnote{This section is not required for an understanding of  subsequent material but only provided for purposes of  linking familiar concepts and ideas with the less familiar categorical perspective.}}  \label{sec:jointNormal}

A simple illustration of a GP as a probability measure on a function space can be given by consideration of a joint normal distribution.
Here we relate the familiar presentation of multivariate normal distributions as expressed in the language of random variables into the categorical framework and language, and illustrate that the resulting conditional distributions correspond to a GP.

Let  $\X$ and $\Y$ represent  two vector valued real random variables having a joint normal distribution

\be \nonumber
J  = \left[ 
    \begin{array}{c} 
           \X \\ 
           \Y 
    \end{array} 
\right]  \sim  \NN\left(    \left[ \begin{array}{c} \mu_1 \\ \mu_2 \end{array} \right], \left[ \begin{array}{cc} \sa_{11} & \sa_{12} \\ \sa_{21} & \sa_{22} \end{array} \right] \right)
\ee
with $\sa_{11}$ and $\sa_{22}$ nonsingular.

Represented categorically, these random variables $\X$ and $\Y$ determine distributions which we represent by $P_1$ and $P_2$ on two measurable spaces $X=\Rv{m}$ and $Y = \Rv{n}$ for some finite integers $m$ and $n$, and the various relationships between the $\prob$ maps is given by the diagram in Figure~\ref{fig:jointNormal}.

 \begin{figure}[H]
\begin{center}
 \begin{tikzpicture}[baseline=(current bounding box.center)]
         \node          (1)   at       (0,0)       {$1$};
         \node         (XY)  at      (0,-1.5)  {$X \times Y$};
 	\node	(X1)	at	(-3,-3)              {$X$};
	\node	(X2)	at	(3,-3)	               {$Y$};

         \draw[->,right] (1) to node {$J$} (XY);
         \draw[->,above,left] (XY) to node [yshift=3pt] {$\delta_{\pi_X}$} (X1);
         \draw[->,above,right] (XY) to node [yshift=3pt] {$\delta_{\pi_Y}$} (X2);
         \draw[->,left,out=180,in=90,looseness=1] (1) to node {$P_1 \sim \NN(\mu_1,\sa_{11})$} (X1);
         \draw[->,right,out=0,in=90,looseness=1] (1) to node {$P_2 \sim \NN(\mu_2,\sa_{22})$} (X2);
         
	\draw[->, above] ([yshift=2pt] X1.east) to node [yshift=3pt] {$\overline{\mcS}$} ([yshift=2pt] X2.west);
	\draw[->, below] ([yshift=-2pt] X2.west) to node [yshift=-3pt] {$\overline{\mcI}$} ([yshift=-2pt] X1.east);
         
 \end{tikzpicture}
\end{center}
\caption{The categorical characterization of a joint normal distribution. }
\label{fig:jointNormal}
\end{figure}
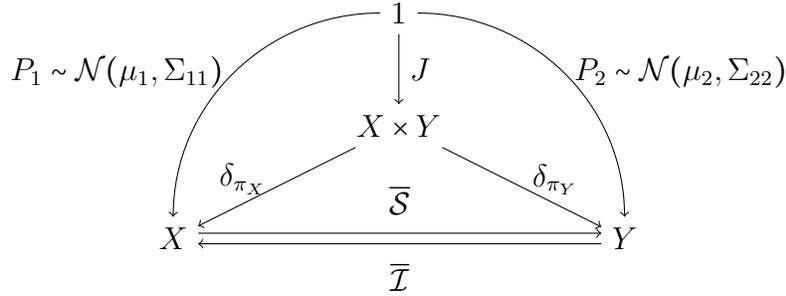

Here $\overline{\mcS}$ and $\overline{\mcI}$ are the conditional distributions
\begin{align*}
        &\overline{\mcS}_x \sim \NN\left(\mu_2 + \sa_{21} \sa_{11}^{-1}(x - \mu_1), \sa_{22} - \sa_{21} \sa_{11}^{-1} \sa_{12}\right)\\
        &\overline{\mcI}_y \sim \NN\left(\mu_1 + \sa_{12} \sa_{22}^{-1}(y - \mu_2), \sa_{11} - \sa_{12} \sa_{22}^{-1} \sa_{21}\right)
\end{align*}
 and the overline notation on the terms ``$\overline{\mcS}$'' and ``$\overline{\mcI}$'' is used to emphasize that the transpose of both of these conditionals are GPs given by a
bijective correspondence in Figure~\ref{fig:normal}.

 \begin{figure}[H]
\begin{center}
 \begin{tikzpicture}[baseline=(current bounding box.center)]
 	\node	(X1)	at	(0,-3)              {$X \otimes 1$};
	\node	(XY)	at	(0,0)	               {$ X \otimes Y^{X}$};
	\node	(X2)	at	(3,0)               {$Y$};
         \node         (1)    at      (-4,-3)             {$1$};
         \node         (YX) at      (-4,0)              {$Y^{X}$};

	\draw[->, left] (X1) to node  {$\Gamma_{\mcS}$} (XY);
	\draw[->,below, right] (X1) to node [xshift=0pt,yshift=0pt] {$\overline{\mcS}$} (X2);
	\draw[->,above] (XY) to node {$ev_{X,Y}$} (X2);
	\draw[->,left] (1) to node {$\mcS$} (YX);

 \end{tikzpicture}
\end{center}
\caption{The defining characteristic property of the evaluation function $ev$ for graphs. }
\label{fig:normal}
\end{figure}
\noindent 

 In the random variable description, these conditionals $\overline{\mcS}_{\xv}$ and $\overline{\mcI}_y$ are often represented simply by
\be \nonumber
\mu_{\Y|\X} = \mu_2 + \sa_{21} \sa_{11}^{-1}(x - \mu_1) \quad   \mu_{\X|\Y} = \mu_1 + \sa_{12} \sa_{22}^{-1}(y - \mu_2)  
\ee
and
\be \nonumber
\sa_{\Y|\X} = \sa_{22} - \sa_{21} \sa_{11}^{-1} \sa_{12} \quad  \quad \sa_{\X|\Y} = \sa_{11} - \sa_{12} \sa_{22}^{-1} 
\ee

It is easily verified that this pair $\{\mcS,\mcI\}$ forms a sampling distribution/inference map pair; {\em i.e.}, the joint distribution can be expressed in terms of the prior $\X$ and sampling distribution $\overline{\mcS}$ or in terms of the prior $\Y$ and inference map $\overline{\mcI}$.  It is clear from this example that what one calls the sampling distribution and inference map depends upon the perspective of what is being estimated.

In subsequent developments, we do not assume a joint normal distribution on the spaces $X$ and $Y$. If such an assumption is reasonable, then the following constructions are greatly simplified by the structure expressed in Figure~\ref{fig:jointNormal}.  As noted previously, it is knowledge of the relationship between the distributions $P_1$ and $P_2$ which characterize the joint and, is the main modeling problem.  Thus the two perspectives on the problem are to find the conditionals, or equivalently, find the prior on $Y^X$ which specifies a function $X \rightarrow Y$ along with the noise model which is ``built into'' the sampling distribution.

\section{Bayesian Models for Function Estimation}             
\label{sec:samplingDist}
We now have all the necessary tools to  build several Bayesian models, both parametric and nonparametric, which illustrate the model building process for ML using CT.  To say we are building Bayesian models means we are constructing the two $\prob$ arrows, $P_H$ and $\mcS$, corresponding to (1) the prior probability, and (2) the sampling distribution of the diagram in Figure~\ref{fig:genericBM}.  The sampling distribution will generally be a composite of several simple $\prob$ arrows.
We start with the  nonparametric models which are in a modeling sense more basic than the   parametric models involving a fixed finite number  of parameters to be determined.  The inference maps $\mcI$ for all of the models will be constructed  in Section~\ref{sec:updating}.   

\subsection{Nonparametric Models}
  In estimation problems where the unknown quantity of interest is a function $f: X \rightarrow Y$, our hypothesis space $H$ will be the function space $Y^X$.  However, simply expressing the hypothesis space as $Y^X$ appears untenable because, in supervised learning, we never measure $Y^X$ directly, but only measure a finite number of sampling points $\{(\xv_i,y_i)\}_{i=1}^N$ satisfying some measurement model such as $y_i=f(\xv_i) + \epsilon$ where $f$ is an ``ideal'' function we seek to determine.    
  
With precise knowledge of the input state $\xv$ and assuming a generic stochastic process $P\colon 1 \rightarrow Y^X$, we are led to propose either the left $\delta_{\xv} \ltensor P$ or right $\delta_{\xv} \rtensor P$ tensor product as a prior on the hypothesis space $X \otimes Y^X$. However, when one of the components in a left or right tensor product  is a Dirac measure, then both the left and right tensors coincide and the choice of right or left tensor is irrelevant. In this case,  we denote the common probability measure by $\delta_{\xv} \otimes P$.  Moreover, a simple calculation shows the prior $\delta_{\xv} \otimes P = \Gamma_P( \cdot \mid \xv)$, the graph of $P$ at $\xv$.   Thus our proposed model, in analogy to the generic Bayesian model, is given by the diagram in Figure~\ref{fig:conditionalBM}.{\footnote{It would be interesting to analyze the more general case where there is uncertainty in the input state also and take the prior as $Q \rtensor P$ or $Q \ltensor P$ for some measure $Q$ on $X$.}

\begin{figure}[H]
 \begin{equation} \nonumber
 \begin{tikzpicture}[baseline=(current bounding box.center)]
         \node         (X)    at      (0,2)         {$1$};
	\node	(H)	at	(-2,0)	      {$X \otimes Y^X$};
	\node	(D)	at	(2,0)               {$X \otimes Y$};	
	\node         (d)    at      (5,1)       {$d$ is measurement data};
	\draw[->, left] (X) to node [xshift=-3pt] {$\Gamma_P(\cdot \mid  \xv)$} (H);
	\draw[->,right,dashed] (X) to node [xshift=2pt] {$d$} (D);
	
	\draw[->, above] ([yshift=2pt] H.east) to node [yshift=3pt] {$\mcS$} ([yshift=2pt] D.west);
 \end{tikzpicture}
 \end{equation}
 \caption{The generic nonparametric Bayesian model for stochastic processes.}
 \label{fig:conditionalBM}
 \end{figure}
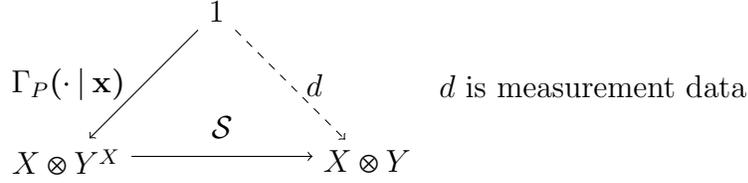

By a \emph{nonparametric} (Bayesian) model, we mean any model which fits into the scheme of Figure~\ref{fig:conditionalBM}.  For all of our analysis purposes we take $P \sim \GP(m,k)$.  A data measurement $d$, corresponding to a collection of sample data $\{\xv_i,y_i\}$ is, 
in ML applications,  generally taken as a Dirac measure, $d = \delta_{(\xv,y)}$.
As in all Bayesian problems, the measurement data $\{\xv_i,y_i\}_{i=1}^N$ can be analyzed either sequentially or as a single batch of data.   For analysis purpose in Section~\ref{sec:updating}, we consider the data  one point at a time (sequentially).

 \subsubsection{Noise Free Measurement Model} 
 
  In the noise free measurement model, we make the hypothesis that the data we observe---consisting of input output pairs $(\xv_i,y_i) \in X \times Y$---satisfies the condition that $y_i = f(\xv_i)$ where $f$ is the unknown function we are seeking to estimate.  While the actual measured data will generally not satisfy this hypothesis,  this model serves both as an idealization and a building block  for the subsequent noisy measurement model.  
 
Using the fundamental maps $\Gamma_{\cdot}\colon Y^X \rightarrow (X \otimes Y)^X$ and $ev\colon X \otimes (X \otimes Y)^X \rightarrow X \otimes Y$ gives a sequence of measurable maps which determine corresponding deterministic $\prob$ maps. This composite,  shown in Figure~\ref{fig:noiseFreeSD}, is our noise free sampling distribution.

\begin{figure}[H]
\begin{equation}  \nonumber
 \begin{tikzpicture}[baseline=(current bounding box.center)]

         \node (XYX2) at  (0,0)  {$X \otimes Y^X$};
         \node (XYg)  at  (4,0)   {$X \otimes (X \otimes Y)^X$};
         \node (Y)    at     (8,0)  {$X \otimes Y$};
         
	\draw[->,above] (XYX2) to node {$1 \otimes \delta_{\Gamma}$} (XYg);
	\draw[->,above] (XYg) to node {$\delta_{ev}$} (Y);
	\draw[->,below,out=-45,in=225,looseness=.3] (XYX2) to node {$\mcS_{nf}$ = composite} (Y);
	 \end{tikzpicture}
 \end{equation}
 \caption{The noise free sampling distribution $\mcS_{nf}$.}
 \label{fig:noiseFreeSD}
\end{figure}

This deterministic sampling distribution is given by the calculation of the composition, {\em i.e.}, evaluating the integral
\be  \nonumber
\begin{array}{lcl}
\mcS_{nf}(U \mid  (\xv,f)) &=& \int_{(\uv,g) \in X \otimes (X \otimes Y)^X} (\delta_{ev})_U(\uv,g ) \, d (1 \otimes \delta_{\Gamma})_{(\xv, f)}  \quad \textrm{for }U \in \sa_{X \otimes Y}\\
&=& (\delta_{ev})_U( \xv,  \Gamma_f ) \\
&=& \delta_{\Gamma_f(\xv)}(U) \\
&=& \delta_{(\xv,f(\xv))}(U) \\
&=& \delta_{(\Gamma_{\overline{\xv}}(ev_{\xv}(\fn)))}(U) \quad  \textrm{ because }  (\xv,f(\xv)) = \Gamma_{\overline{x}}(ev_{\xv}(f)) \\
&=& \chi_{U} (\Gamma_{\overline{\xv}}(ev_{\xv}(f))) \\
&=& \chi_{ ev_{\xv}^{-1}(\Gamma_{\overline{\xv}}^{-1}(U))}(f).
\end{array}
\ee
Using the commutativity of Figure~\ref{fig:graphMap}, 
 the noise free sampling distribution can also be written as $\mcS_{nf}(U \mid  (\xv, f))=\chi_{\Gamma_{\cdot}^{-1} \left( \hat{ev}_{\xv}^{-1}(U) \right)}(f)$.
 
Precomposing the sampling distribution with this prior probability measure  the composite 
\be \label{Q2}
\begin{array}{lcl}
(\mcS_{nf} \circ \Gamma_{P}(\cdot \mid  \xv))(U) 
&=& \int_{(\uv,f) \in X \otimes Y^X} \mcS_{nf}(U \mid  (\uv, f)) \, d(\underbrace{\Gamma_{P}( \cdot \mid  \xv)}_{ =P \Gamma_{\overline{\xv}}^{-1}}) \quad \textrm{ for }U \in \sa_{X \otimes Y} \\
&=& \int_{f \in Y^X} \mcS_{nf}(U \mid  \Gamma_{\overline{\xv}} (f)) \, dP \\
&=& \int_{f \in Y^X} \chi_{\Gamma_{\cdot}^{-1} \left( \hat{ev}_{\xv}^{-1}(U) \right)}(f) \, dP  \\
&=& P( \Gamma^{-1}_{\cdot} (\hat{ev}_{\xv}^{-1}(U)))  
\end{array}
\ee
By the relation $\Gamma_{\overline{\xv}} \circ ev_{\xv} =\hat{ev}_{\xv} \circ \Gamma_{\cdot}$ this  can also be written as
\be  \nonumber
(\mcS_{nf} \circ  \Gamma_{P}(\cdot \mid  \xv))(U) = P(ev_{\xv}^{-1}(\Gamma_{\overline{\xv}}^{-1}(U))).
\ee

Given that the probability measure $P$ is specified as a Gaussian process (which is defined in terms of how it restricts to finite subspaces $X_0 \subset X$), for computational purposes we need to consider the push forward probability measure of $P$ on $Y^X$ to $Y^{X_0}$ as in Figure~\ref{fig:GPdef}.  
Taking the special case with $X_0 = \{\xv \}$, the pushforward corresponds to composition with the deterministic projection map $\delta_{ev_{\xv}}$.   Starting with the diagram of Figure~\ref{fig:graphMap}, precomposing with $P$ and postcomposition with the deterministic map $\delta_{\pi_Y} \circ \delta_{\iota}$ gives  the diagram in Figure~\ref{fig:SDComputation0}. Then we can use the fact $P$ projected onto any coordinate is a Gaussian distribution to compute the likelihood that a measurement will occur in a measurable set $B \subset Y$.

\begin{figure}[H]
\begin{equation}  \nonumber
 \begin{tikzpicture}[baseline=(current bounding box.center)]
         \node  (1)  at (-1.5,0)    {$1$};
         \node (YX) at   (3,0)  {$Y^X$};
         \node (XYX) at  (6,0)  {$(X \otimes Y)^X$};
         \node (XY)  at  (9,0)   {$X \otimes Y$};
         \node (Y)  at    (6,-2)   {$Y$};
         
	\draw[->,above] (1) to node {$P \sim \GP(m,k)$} (YX);
	\draw[->,above] (YX) to node {$\Gamma_{\cdot}$} (XYX);
	\draw[->,above] (XYX) to node {$\hat{ev}_{\xv}$} (XY);
	\draw[->,right] (XY) to node {$\delta_{\pi_Y}$} (Y);
	\draw[->,above,right,dashed] (YX) to node  [xshift=3pt,yshift=4pt] {$\delta_{ev_{\xv}}$} (Y);
	\draw[->,below, left]  (1) to node [xshift = -5pt,yshift=-5pt] {$P ev_{\xv}^{-1} \sim \NN(m(\xv),k(\xv,\xv))$} (Y);

	 \end{tikzpicture}
 \end{equation}
 \caption{The distribution $P \sim \GP(m,k)$  can be evaluated on rectangles $U=A \times B$ by projecting onto the given $x$ coordinate.}
 \label{fig:SDComputation0}
 \end{figure}
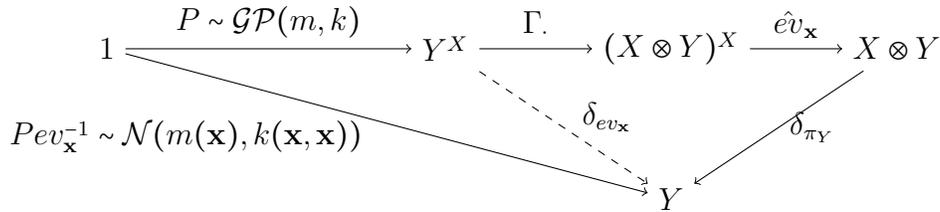
Under this assumption  $P \sim \GP(m,k)$ the expected value of the probability measure $(\mcS_{nf} \circ \Gamma_{P}(\cdot \mid  \xv))$ on the real vector space $X \otimes Y$ is
\be  \nonumber
\begin{array}{lcl}
\E_{(\mcS_{nf} \circ \Gamma_{P}(\cdot \mid  \xv))}[Id_{X \otimes Y}] &=&\int_{(\uv,\vv) \in X \otimes Y} (\uv,\vv) \, d ( P(\Gamma_{\cdot}^{-1} \hat{ev}_{\xv}^{-1}) \\ 
&=& \int_{g \in (X \otimes Y)^X} \hat{ev}_{\xv}(g) \, d(P \Gamma_{\cdot}^{-1}) \\
&=& \int_{f  \in Y^X} \hat{ev}_{\xv}(\Gamma(f)) \, dP \\
&=& \int_{f \in Y^X} (\xv, f(\xv)) \, dP \\
&=& (\xv,m(\xv)), 
\end{array}
\ee
where the last equation follows because on the two components of the vector valued integral, $\int_{f \in Y^X} f(\xv) \, dP = m(\xv)$ and $\int_{f \in Y^X} \xv \, dP = \xv$ as the integrand is constant.
The variance is\footnote{The squaring operator in the variance is defined component wise on the vector space $X \otimes Y$.}
\be  \nonumber
\E_{(\mcS_{nf} \circ \Gamma_{P}(\cdot \mid  \xv))}[ (Id_{X \otimes Y} - \E_{(\mcS_{nf} \circ \Gamma_{P}(\cdot \mid  \xv))}[Id_{X \otimes Y}])^2] = \E_{(\mcS_{nf} \circ \Gamma_{P}(\cdot \mid  \xv))}[ (Id_{X \otimes Y} - (\xv,m(\xv)))^2] , 
\ee
which when expanded gives
\be \nonumber
\begin{array}{lcl}
 &=& \int_{(\uv,v) \in X \otimes Y} (Id_{X \otimes Y} - (\xv,m(\xv)))^2(\uv,v) \, d(P(\Gamma_{\cdot}^{-1}ev_{\xv}^{-1})) \\
&=& \int_{f \in Y^X} (Id_{X \otimes Y} - (\xv,m(\xv)))^2  \underbrace{(ev_{\xv}(\Gamma_{\cdot}(f)))}_{=(\xv,f(\xv))}  \, dP \\
&=& \int_{f \in Y^X} \left(  (\xv - \xv)^2, \left(f(\xv) - m(\xv)\right)^2 \right) \, dP  \\
&=& (\mathbf{0}, k(\xv,\xv)).
\end{array}
\ee
Consequently this sampling distribution, together with the prior distribution $\delta_{\xv} \rtensor P=\Gamma_{P}(\cdot \mid  \xv)$, provide what we expect of such a model.

\subsubsection{Gaussian Additive Measurement Noise Model}  Additive noise measurement models are often expressed by the simple expression
\be \label{noiseEx}
z = y + \epsilon
\ee
where $y$ represents the state while the $\epsilon$ term itself represents a normally distributed random variable with zero mean and variance $\sigma^2$.  In categorical terms this expression corresponds to the map in Figure~\ref{fig:additiveNoiseModel}.

\begin{figure}[H]
\[
 \begin{tikzpicture}[baseline=(current bounding box.center)]
         \node  (1)  at (0,0)    {$1$};
         \node (Y)   at  (4,0)   {$Y$};
         
	\draw[->,above] (1) to node  {$M_y \sim \NN(y,\sigma^2)$} (Y);
	 \end{tikzpicture}
 \]
 \caption{The additive Gaussian noise measurement model.}
 \label{fig:additiveNoiseModel}
 \end{figure}

 Because the state $y$ in Equation~\ref{noiseEx} is arbitrary, this additive noise model is representative of the $\prob$ map $Y \stackrel{M}{\longrightarrow} Y$
defined by
 \be  \nonumber
 M(B\mid y) = M_y(B) \quad \forall y \in Y, \, \forall B \in \sa_Y.
 \ee
 
 Given a GP $P \sim \GP(f,k)$ on $Y^X$,  it follows that for any $\xv \in X$,  $Pev_{\xv}^{-1} \sim \NN(f(\xv),k(\xv,\xv))$ and for any $B \in \sa_Y$, the composition
 
\[
 \begin{tikzpicture}[baseline=(current bounding box.center)]
         \node  (1)  at (-2,0)    {$1$};
         \node (Y)   at  (4,0)   {$Y$};
         \node (Y2) at (8,0)    {$Y$};
         
	\draw[->,above] (1) to node  {$Pev_{\xv}^{-1} \sim \NN(f(\xv),k(\xv,\xv))$} (Y);
	\draw[->,above] (Y) to node {$M$} (Y2);
	 \end{tikzpicture}
 \]
is
\be \nonumber
\begin{array}{lcl}
(M \circ P ev_{\xv}^{-1})(B) &=& \int_{u \in Y} M_B(u) \, d(Pev_{\xv}^{-1}) \\
&=& \int_{u \in Y} \left( \frac{1}{\sqrt{2 \pi} \sigma} \int_{v \in B} e^{ -\frac{(v - u)^2}{2 \sigma^2}} \, dv \right) d(Pev_{\xv}^{-1}) \\
&=& \frac{1}{\sqrt{2 \pi k(f(\xv),f(\xv))}}  \,  \int_{u \in Y} \left( \frac{1}{\sqrt{2 \pi} \sigma} \int_{v \in B} e^{ -\frac{(v - u)^2}{2 \sigma^2}} \, dv \right)   e^{ - \frac{(u - f(\xv))^2}{2 \cdot k(f(\xv),f(\xv))}} du \\
&=& \frac{1}{2 \pi  \cdot \sigma \cdot \sqrt{k(f(\xv),f(\xv))}} \, \int_{v \in B}  \int_{u \in Y} e^{ -\frac{(v - u)^2}{2 \sigma^2}} \,  e^{ - \frac{(u - f(\xv))^2}{2 \cdot k(f(\xv),f(\xv))}}  \, du \,  dv \\
&=& \frac{1}{ \sqrt{2 \pi (k(\xv,\xv) + \sigma^2)}} \int_{v \in B} e^{-\frac{(v-f(\xv))^2}{2( k(\xv,\xv) + \sigma^2)}} \, dv.
\end{array}
\ee
Thus  this composite is the normal distribution 
 \begin{equation} \label{compositeMP} 
 \begin{tikzpicture}[baseline=(current bounding box.center)]
         \node  (1)  at (-4,0)    {$1$};
         \node (Y)   at  (4,0)   {$Y$};

         \draw[->,above] (1) to node {$M \circ P ev_{\xv}^{-1} \sim \NN(f(\xv),k(\xv,\xv) + \sigma^2)$} (Y);
 \end{tikzpicture}
 \end{equation}
 More generally we have the commutative $\prob$ diagram given in Figure~\ref{fig:compositeGP}, where, for all $f \in Y^X$,
\be  \label{noiseGP}
N_f \sim \GP(f,k_N) \quad \quad  
k_N(\xv,\xv') = \left\{ \begin{array}{ll} \sigma^2 & \textrm{ iff }\xv =\xv' \\ 0 & \textrm{ otherwise. } \end{array} \right. 
\ee

\begin{figure}[H]
\[
 \begin{tikzpicture}[baseline=(current bounding box.center)]
         \node  (1)  at (0,-1.5)    {$1$};
         \node (YX)   at  (4,0)   {$Y^X$};
         \node (YX2) at (8,0)    {$Y^X$};
         \node (Y)  at (4,-3)      {$Y$};
         \node (Y2)  at (8,-3)      {$Y$};
         
	\draw[->,above,left] (1) to node[xshift=-2pt,yshift=10pt]  {$P \sim \GP(f,k)$} (YX);
	\draw[->,above] (YX) to node {$N$} (YX2);
	\draw[->,below,left] (1) to node [xshift=-5pt,yshift=-4pt] {$Pev_{\xv}^{-1} \sim \NN(f(\xv),k(\xv,\xv))$} (Y);
	\draw[->,above] (Y) to node {$M$} (Y2);
	\draw[->,right] (YX) to node {$\delta_{ev_{\xv}}$} (Y);
	\draw[->,right] (YX2) to node {$\delta_{ev_{\xv}}$} (Y2);
	 \end{tikzpicture}
\]
 \caption{Construction of the generic Markov kernel $N$ for modeling the Gaussian additive measurement noise.}
 \label{fig:compositeGP}
 \end{figure}
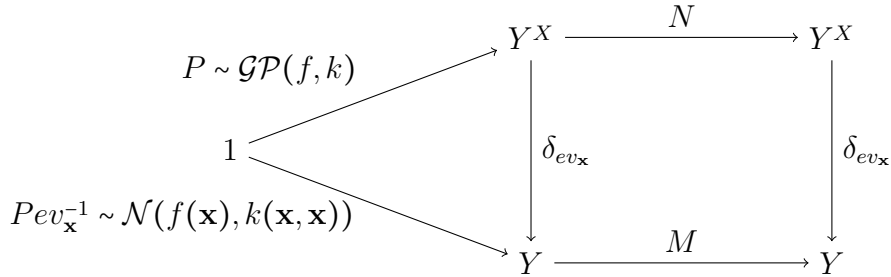

The commutativity of the right hand square in Figure~\ref{fig:compositeGP}  follows from
\be \nonumber
\begin{array}{lcl}
(\delta_{ev_{\xv}} \circ  N)(B \mid f) &=& \int_{g \in Y^X} (\delta_{ev_{\xv}})_B(g) \, dN_f \\
&=& N_f (ev_{\xv}^{-1}(B)) \\
&=& N_{f(\xv)}(B) \\
&=& \int_{y \in Y} M_B(y) \, d (\underbrace{\delta_{ev_{\xv}})_f}_{ = \delta_{f(\xv)}} \\
&=& (M \circ \delta_{ev_{\xv}})(B \mid f).
\end{array}
\ee

With this Gaussian additive noise measurement model $N$ our sampling distribution $\mcS_{nf}$ can easily be modified by incorporating the additional map $N$ into the sequence in Figure  \ref{fig:noiseFreeSD} to yield the Gaussian additive noise sampling distribution model $\mcS_n$  shown in Figure~\ref{fig:genericNoiseModel}.
\begin{figure} [H]
\[
 \begin{tikzpicture}[baseline=(current bounding box.center)]
         \node (XYX) at   (1,0)  {$X \otimes Y^X$};
         \node (XYX2) at  (4.5,0)  {$X \otimes Y^X$};
         \node (XYg)  at  (8.5,0)   {$X \otimes (X \otimes Y)^X$};
         \node (Y)    at     (12,0)  {$X \otimes Y$};

	\draw[->,below] (XYX) to node {$1_X \otimes N$} (XYX2);
	\draw[->,below] (XYX2) to node {$1_X \otimes \delta_{\Gamma_{\cdot}}$} (XYg);
	\draw[->,below] (XYg) to node {$\delta_{ev}$} (Y);
	\draw[->,below,out=-45,in=225,looseness=.3] (XYX) to node {$\mcS_n$ = composite} (Y);
	 \end{tikzpicture}
\]
 \caption{The sampling distribution model in $\prob$ with additive Gaussian noise.}
 \label{fig:genericNoiseModel}
 \end{figure}

Here $1_X \otimes N$ is, by the definition given in Section~\ref{sec:tensorP},
\be \nonumber
(1_X \otimes N) \left(U, (\xv,f) \right) = N( \Gamma_{\overline{\xv}}^{-1}(U) \mid  f) 
\ee
so the nondeterministic noisy sampling distribution is given by
\be  \label{noisySD}
\begin{array}{lcl}
\mcS_n(U \mid  (\xv,f)) &=& \left(\mcS_{nf} \circ (1 \otimes N) \right)(U \mid  (\xv,f))  \quad \textrm{for }U \in \sa_{X \otimes Y}\\ \\
&=& \int_{(\uv,g) \in X \otimes Y^X} (\mcS_{nf})_U(\uv,g) \, d (N(\Gamma_{\overline{\xv}}^{-1}(\cdot) \mid  f) \\
&=& \int_{g \in Y^X} (\mcS_{nf})_U( \Gamma_{\overline{\xv}}(g)) \, d N(\cdot \mid  f)  \\
&=& \int_{g \in Y^X} (\mcS_{nf})(U \mid   (\xv,g) ) \, d N( \cdot \mid  f)  \\
&=& \int_{g \in Y^X} \chi_{\Gamma_{\cdot}^{-1} \left( \hat{ev}_{\xv}^{-1}(U) \right)}(g) dN(\cdot \mid  f) \\
&=&N \left( \Gamma_{\cdot}^{-1} \left( \hat{ev}_{\xv}^{-1}(U) \right) \mid  f \right) \\ 
&=& N \left( ev_{\xv}^{-1}( \Gamma_{\overline{\xv}}^{-1}(U) ) \mid  f \right) \\
&=& N_f  ev_{\xv}^{-1}( \Gamma_{\overline{\xv}}^{-1}(U) ).
\end{array}
\ee

Just as we did for the GP $P\colon 1 \rightarrow Y^X$ in Figure~\ref{fig:SDComputation0},  each GP $N_f$ can be analyzed by its push forward measures onto any coordinate $\xv \in X$ to obtain the diagram in Figure~\ref{fig:SDComputation2}.

\begin{figure}[H]
\begin{equation}  \nonumber
 \begin{tikzpicture}[baseline=(current bounding box.center)]
         \node  (1)  at (-1.5,0)    {$1$};
         \node (YX) at   (3,0)  {$Y^X$};
         \node (XYX) at  (6,0)  {$(X \otimes Y)^X$};
         \node (XY)  at  (9,0)   {$X \otimes Y$};
         \node (Y)  at    (6,-2)   {$Y$};
         
	\draw[->,above] (1) to node {$N_f \sim \GP(f,k_N)$} (YX);
	\draw[->,above] (YX) to node {$\delta_{\Gamma_{\cdot}}$} (XYX);
	\draw[->,above] (XYX) to node {$\delta_{\hat{ev}_{\xv}}$} (XY);
	\draw[->,right] (XY) to node {$\delta_{\pi_Y}$} (Y);
	\draw[->,above,right,dashed] (YX) to node  [xshift=3pt,yshift=4pt] {$\delta_{ev_{\xv}}$} (Y);
	\draw[->,below, left]  (1) to node [xshift = -5pt,yshift=-5pt] {$N_f \left( ev_{\xv}^{-1}(\cdot) \right) \sim \NN(f(\xv),\sigma^2)$} (Y);

	 \end{tikzpicture}
 \end{equation}
 \caption{The GP $N_f$ can be evaluated on rectangles $U=A \times B$ by projecting onto the given $x$ coordinate.}
 \label{fig:SDComputation2}
 \end{figure}
 
Taking $U$ as a rectangle, $U = A \times B$, with $A \in \sa_X$ and $B \in \sa_Y$, the likelihood that a measurement will occur in the rectangle conditioned on $(\xv,f)$ is given  by
\be  \nonumber
\begin{array}{lcl}
\mcS_n(A \times B \mid  (\xv,f)) &=& N_f ev_{\xv}^{-1}( \Gamma_{\overline{\xv}}^{-1}(A \times B)) \\
&=& \delta_{\xv}(A) \cdot N_f ev_{\xv}^{-1}(B) \\
&=&  \delta_{\xv}(A) \, \cdot \, \frac{1}{\sqrt{2 \pi} \sigma} \int_{y \in B} e^{-\frac{(y - f(\xv))^2}{2 \sigma^2}} \, dy.
\end{array}
\ee

Using the associativity property of categories, from Figure~\ref{fig:genericNoiseModel} with a prior $\Gamma_{P}(\cdot \mid  \xv)$ on $X \otimes Y^X$, the composite $\mcS_n \circ \Gamma_{P}(\cdot \mid  \xv)$ can be decomposed as 
\be \nonumber
\mcS_n \circ \Gamma_{P}(\cdot \mid  \xv) = \mcS_{nf} \circ ((1_X \otimes N) \circ \Gamma_{P}(\cdot \mid  \xv))
\ee
while the term $((1_X \otimes N) \circ \Gamma_{P}(\cdot \mid  \xv)) = \Gamma_{N \circ P}(\cdot \mid  \xv)$ follows from the commutativity of the diagram in Figure~\ref{fig:GPComposite}, where, as shown in Equation~\ref{compositeMP}, $M \circ Pev_{\xv}^{-1} \sim \NN(m, k(\xv,\xv) + \sigma^2)$ which implies $N \circ P  \sim \GP(m,k+k_N)$.

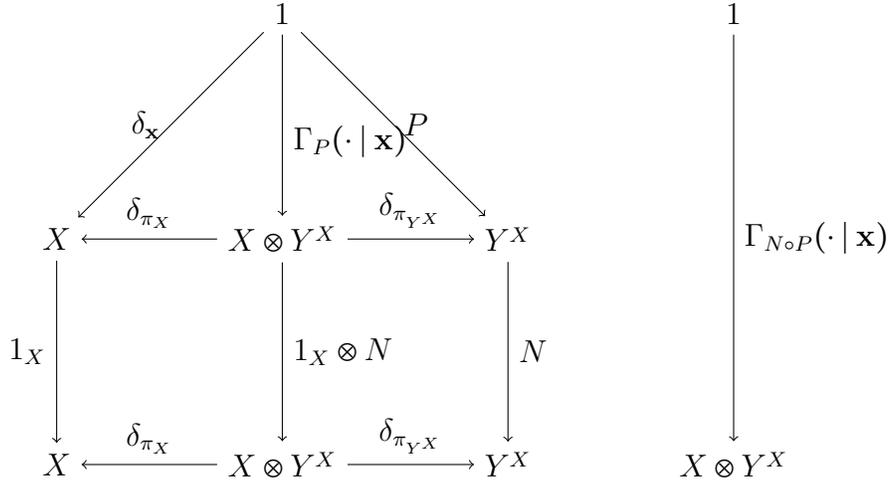
\begin{figure}[H]
\begin{equation}  \nonumber
 \begin{tikzpicture}[baseline=(current bounding box.center)]
         \node  (X)  at (0,0)    {$1$};
         \node (X1) at   (-3,-3)  {$X$};
         \node (X2) at  (-3,-6)   {$X$};
         
         \node (YX)    at     (3,-3)  {$Y^X$};
         \node (YX2)  at    (3,-6)   {$Y^X$};
         
         \node (XYX)  at (0,-3)  {$X \otimes Y^X$};
         \node (XYX2) at (0,-6)  {$X \otimes Y^X$};
         
         \node (X3) at  (6,0)    {$1$};
         \node (XYX3)  at (6, -6)  {$X \otimes Y^X$};
         
	\draw[->,above,left] (X) to node {$\delta_{\xv}$} (X1);
	\draw[->,left] (X1) to node {$1_X$} (X2);

	\draw[->,right] (X) to node {$P$} (YX);
	\draw[->,right] (YX) to node {$N$} (YX2);
	
	\draw[->,right] (X) to node [yshift=-6pt] {$\Gamma_P(\cdot \mid  \xv)$} (XYX);
	\draw[->,right] (XYX) to node {$1_X \otimes N$} (XYX2);
	
         \draw[->,above] (XYX) to node {$\delta_{\pi_{X}}$} (X1);
         \draw[->,above] (XYX2) to node {$\delta_{\pi_{X}}$} (X2);
         \draw[->,above] (XYX) to node {$\delta_{\pi_{Y^X}}$} (YX);
         \draw[->,above] (XYX2) to node {$\delta_{\pi_{Y^X}}$} (YX2);
         
         \draw[->,right] (X3) to node {$\Gamma_{N \circ P}(\cdot \mid  \xv)$} (XYX3);
         
	 \end{tikzpicture}
 \end{equation}
 \caption{The composite of the prior and noise measurement model is the graph of a GP at $\xv$.}
 \label{fig:GPComposite}
 \end{figure}

Using the fact $\mcS_n \circ \Gamma_{P}(\cdot \mid  \xv) = \mcS_{nf} \circ  \Gamma_{N \circ P}(\cdot \mid  \xv)$, the expected value of the composite $\mcS_n \circ \Gamma_{P}(\cdot \mid  \xv)$  is readily  shown to be

\be  \nonumber
\E_{(\mcS_n \circ \Gamma_{P}(\cdot \mid  \xv))}[Id_{X \otimes Y}] = (\xv,m(\xv)) 
\ee
 while the variance is
\be  \nonumber
\E_{(\mcS_n \circ \Gamma_{P}(\cdot \mid  \xv))}[ (Id_{X \otimes Y} - \E_{(\mcS_n \circ \Gamma_{P}(\cdot \mid  \xv))}[Id_{X \otimes Y}])^2] = (\mathbf{0}, k(\xv,\xv) + \sigma^2).
\ee

\subsection{Parametric Models}

A parametric model can be though of as carving out a subset of $Y^X$ specifying the form of functions which one wants to consider as valid hypotheses.  With this in mind, let us define a
$p$-dimensional  \emph{parametric map} as a measurable function
\[
i\colon\Rp  \longrightarrow Y^X 
\]
where $\Rp$ has the product $\sigma$-algebra with respect to the canonical projection maps onto the measurable space $\mathbb{R}$ with the Borel $\sigma$-algebra.  Note that $i(\av) \in Y^X$ corresponds (via the SMwCC structure) to a  function $ \overline{i(\av)}\colon X \rightarrow Y$.\footnote{Note that the function $i(\av)$ is unique by our construction of the transpose of the function $i(\av)  \in Y^X$.  The non-uniqueness aspect of the SMwCC structure only arises in the other direction - given a conditional probability measure there may be multiple functions satisfying the required commutativity condition.}
This parametric map $i$ determines the deterministic $\prob$ arrow $\delta_i\colon \Rp \rightarrow Y^X$, which in turn determines the deterministic tensor product arrow \mbox{$1_X \otimes \delta_i\colon X \otimes \Rp \longrightarrow X \otimes Y^X$}. This arrow serves as a bridge connecting the two forms of Bayesian models, the parametric and nonparametric models.

A parametric model consists of a parametric mapping  combined  with a nonparametric noisy measurement model $\mcS_n$ with prior  $(1_X \otimes \delta_i) \circ \Gamma_P(\cdot \mid  \xv)$ to give the diagram in Figure~\ref{fig:genericParametric} and we define a \emph{parametric Bayesian model} as any model which fits into the scheme of Figure~\ref{fig:genericParametric}.

\begin{figure}[H]
 \begin{equation} \nonumber
 \begin{tikzpicture}[baseline=(current bounding box.center)]
         \node         (X)    at      (1,2)         {$1$};
	\node	(H)	at	(-2,0)	      {$X \otimes \Rp$};
	\node         (YX)  at    ( 1,0)        {$X \otimes Y^X$};
	\node	(D)	at	(4,0)               {$X \otimes Y$};	
	\draw[->,left] (X) to node [xshift=-3pt] {$\Gamma_P(\cdot \mid  \xv)$} (H);
	\draw[->,below] (H) to node {$1_X \otimes \delta_i$} (YX);
	\draw[->, right,dashed] (X) to node [xshift=3pt] {$d$} (D);
	
	\draw[->, below] ([yshift=2pt] YX.east) to node  {$\mcS_n$} ([yshift=2pt] D.west);
 \end{tikzpicture}
 \end{equation}
 \caption{The generic  parametric Bayesian model.}
\label{fig:genericParametric}
 \end{figure}

In the ML literature, one generally assumes complete certainty with regards to the input state $\xv \in X$.  However, there are situations in which complete knowledge of the input state $\xv$ is itself  uncertain.  This occurs in object recognition problems where  $\xv$ is a feature vector which may be only partially observed because of  obscuration and such data is the only training data available.

For real world modeling applications there must be a noise model component associated with a parametric model for it to make sense.  For example we could estimate an unknown function as a  constant function, and hence have the $1$ parameter model $i\colon \mathbb{R} \rightarrow Y^X$ given by $i(a) = \overline{a}$, the constant function on $X$ with value $a$.  Despite how crude this approximation may be, we can still  obtain a ``best'' such Bayesian approximation to the function given measurement data where ``best''  is defined in the Bayesian  probabilistic sense -  given a prior and a measurement the posterior gives the best estimate under the given modeling assumptions.  Without a noise component, however, we cannot even account for the fact our data is different than our model which, for analysis and prediction purposes, is a worthless model.

\begin{example} \textbf{Affine Parametric Model}
 Let  $X = \Rn$ and $p=n+1$.  The affine  parametric model is given by considering  the valid hypotheses to consist of  affine functions
\be  \label{affineMap}
\begin{array}{lclcl}
F_{\mathbf{a}} &:& X & \rightarrow & Y \\
&:& \xv & \mapsto &  \sum_{j=1}^n a_j x_j + a_{n+1}
\end{array}
\ee
where $\xv = (x_1,x_2,\ldots,x_n) \in X$, the ordered $(n+1)-tuple$ \mbox{$\mathbf{a} = (a_1,\ldots,a_n, a_{n+1}) \in \mathbb{R}^{^{n+1}}$} are fixed parameters so $F_{\mathbf{a}}  \in Y^X$ and  the parametric map
\be  \nonumber
\begin{array}{lclcl}
i&:& \mathbb{R}^{n+1} & \longrightarrow & Y^X \\
&:&\mathbf{a} & \mapsto & i(\av) = \overline{F_{\mathbf{a}}}
\end{array} 
\ee
specifies the subset of all possible affine  models $F_{\mathbf{a}}$.  

In particular, if $n=2$ and the test data consist of two data classes, say with labels $-1$ and $1$, which is separable then the coefficients $\{a_1,a_2,a_3\}$ specify the hyperplane separating the data points as shown in  Figure~\ref{fig:hyperPlane}.

\begin{figure}[H]
\begin{center}
\includegraphics[width=5in,height=3in]{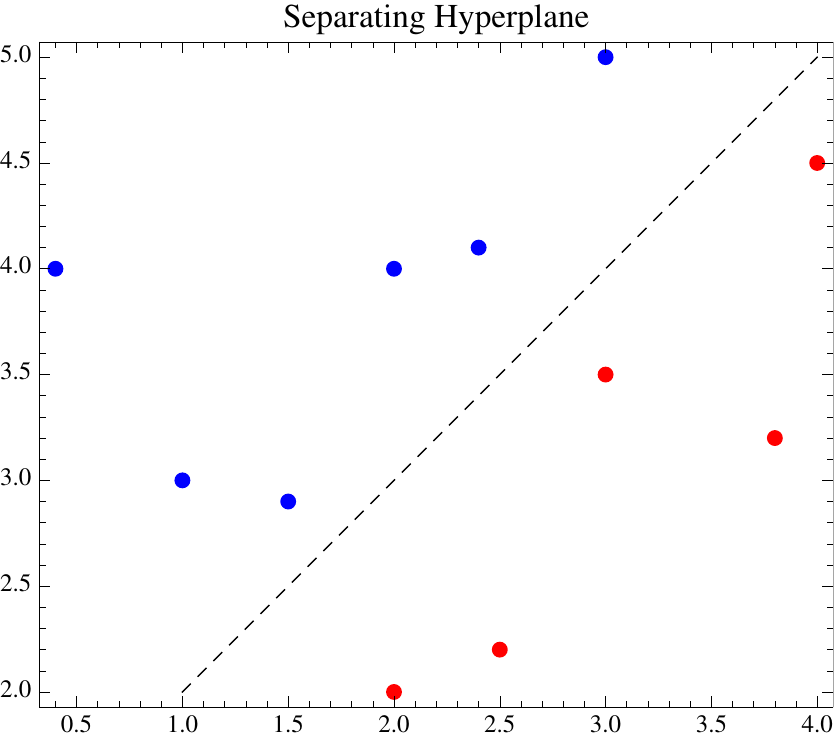}
\end{center}
\caption{An affine model suffices for separable data.}
\label{fig:hyperPlane}
\end{figure}

In this particular example where the class labels are integer valued,  the resulting function we are estimating will not be integer valued but, as usual, approximated by real values.
\end{example}

Such parametric models are useful to avoid over fitting data because the number of parameters are finite and fixed with respect to the number of measurements in contrast to nonparametric methods in which each measurement serves as a parameter defining the updated probability measure on $Y^X$. 

More generally, for any parametric map $i$ take the canonical basis vectors $\e_j$, which are  the $j^{th}$ unit vector in $\Rp$,  and let the image of the  basis elements  $\{ \e_j \}_{j=1}^p$ under the parametric map $i$ be $i(\e_j) =  f_j \in Y^X$.  
Because $Y^X$ forms a real vector space under pointwise addition and scalar multiplication, $(f+g)(\xv) = f(\xv) + g(\xv)$ and $(\alpha f)(\xv) = \alpha (f(\xv))$ for all $f,g \in Y^X, \xv \in X$, and $\alpha \in \mathbb{R}$, we observe that the ``image carved out'' by the parametric map $i$ is just the span of the image of the basis elements $\{e_j\}_{j=1}^p$. 
 In the above example $f_j = \pi_j$, for $j=1,2$ where $\pi_j$ is the canonical projection map $\mathbb{R}^2 \rightarrow \mathbb{R}$, and $f_3 = \overline{1}$, the constant function with value $1$ on all points $\xv \in X$.  Thus the image is as specified by the Equation~\ref{affineMap}.

 \begin{example} \textbf{Elliptic Parametric Model}  When the data is not linearly separable as in the previous example, but rather of the form shown in Figure~\ref{fig:quadratic}, then a higher order parametric model is required. 
 \begin{figure}[H]
\begin{center}
\includegraphics[width=4in,height=3in]{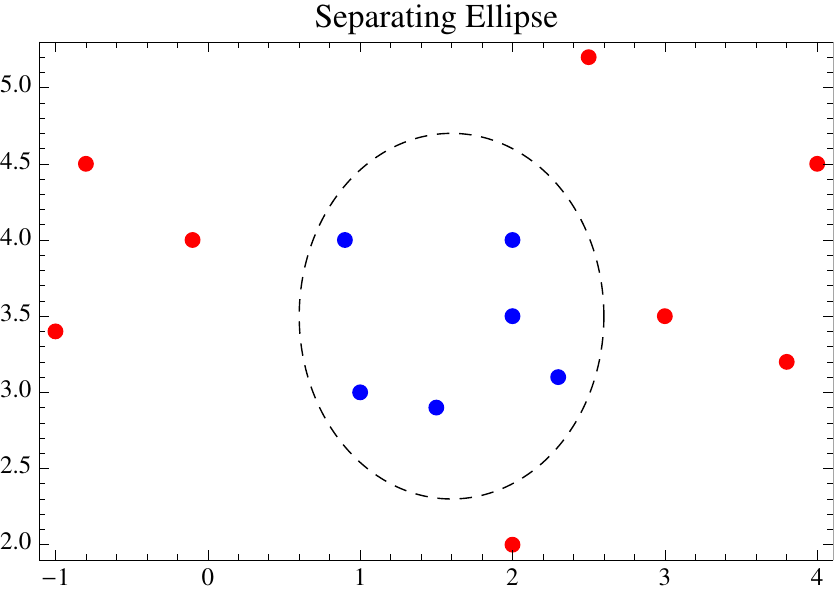}
\end{center}
\caption{An elliptic parametric model suffices to separate the data.}
\label{fig:quadratic}
\end{figure}

Taking  $X = \mathbb{R}^n$ and $p=n^2+n+1$, the elliptic  parametric model is given by considering  the valid hypotheses to consist of  all elliptic functions
\be  
\begin{array}{lclcl}
F_{\mathbf{a}} &:& X & \rightarrow & Y \\
&:& \xv & \mapsto &  \sum_{j=1}^n a_j x_j +  \sum_{j=1}^n \sum_{k=1}^n a_{n + n(j-1) + k} x_j x_k + a_{n^2 + n +1}
\end{array}
\ee
where $\xv = (x_1,x_2,\ldots,x_n) \in X$, the ordered $(n^2 + n +1)-tuple$ \mbox{$\mathbf{a} = (a_1,\ldots, a_{n^2 + n+1}) \in \mathbb{R}^{n^2 + n +1}$} are fixed parameters so $F_{\mathbf{a}}  \in Y^X$ and  the parametric map
\be  \nonumber
\begin{array}{lclcl}
i&:& \mathbb{R}^{^{n^2+n+1}} & \longrightarrow & Y^X \\
&:&\mathbf{a} & \mapsto & i(\av) = \overline{F_{\mathbf{a}}}
\end{array} 
\ee
specifies the subset of all possible elliptic models $F_{\mathbf{a}}$.  

With this model the linearly nonseparable data becomes separable.  This is the basic idea behind support vector machines (SVMs):  simply embed the data into a higher order space where it can be (approximately) separated by a higher order parametric model.

\end{example}
   
 Returning to the general construction of the Bayesian model for the parametric model we  take the Gaussian additive noise model, Equation~\ref{noisySD}, and expand the diagram in Figure~\ref{fig:genericParametric}  to the diagram in Figure~\ref{fig:parametricModel}, where the parametric model sampling distribution can be readily determined on rectangles $A \times B \in \sa_{X \otimes Y}$  by
\be  \nonumber
\begin{array}{lcl}
\mcS(A \times B \mid  (\xv,\av)) &=&   N \left( ev_{\xv}^{-1}( \Gamma_{\overline{\xv}}^{-1}(A \times B) ) \mid  \underbrace{i(\av)}_{=F_{\av}} \right) \\
&=& N_{F_{\av}} ev_{\xv}^{-1}(\Gamma_{\overline{\xv}}^{-1}(A \times B) ) \\
&=& \delta_{\xv}(A) \cdot \frac{1}{\sqrt{2 \pi} \sigma} \int_{y \in B} e^{- \frac{ (y - F_{\av}(\xv))^2}{2 \sigma^2}} \, dy.
\end{array}
\ee

\begin{figure}[H]
\begin{equation}  \nonumber
 \begin{tikzpicture}[baseline=(current bounding box.center)]

         \node  (1)    at    (1.8,3)    {$1$};
         \node (Rn)   at  (-4,0)    {$X \otimes \Rp$};
         \node (XYX)   at  (-1,0)  {$X \otimes Y^X$};
         \node (XYX2) at  (2,0)  {$X \otimes Y^X$};
         \node (XYg)  at  (6,0)   {$X \otimes (X \otimes Y)^X$};
         \node (Y)    at     (9,0)  {$X \otimes Y$};
         
         \draw[->,left,above] (1) to node [xshift=-16pt,yshift=3pt] {$\Gamma_P(\cdot \mid  \xv)$} (Rn);
         \draw[->,above] (Rn) to node {$1_X \otimes \delta_{i}$} (XYX);
         \draw[->,above] (XYX) to node {$1_X \otimes N$} (XYX2);
	\draw[->,above] (XYX2) to node {$1_X \otimes \delta_{\Gamma}$} (XYg);
	\draw[->,above] (XYg) to node {$\delta_{ev}$} (Y);
	\draw[->,below,out=-45,in=225,looseness=.3] (Rn) to node {$\mcS$} (Y);
	\draw[->,above,out=45,in=135,looseness=.5] (XYX) to node {$\mcS_n$} (Y);
	 \end{tikzpicture}
 \end{equation}
 \caption{The parametric model sampling distribution as a composite of four components.}
 \label{fig:parametricModel}
\end{figure}
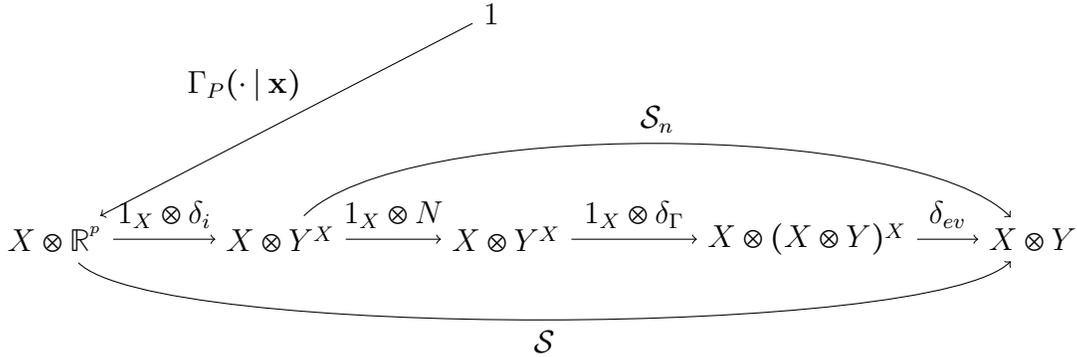

Here we have used the fact $N_{F_{\av}} ev_{\xv}^{-1} \sim \NN(F_{\av}(\xv),\sigma^2)$ which follows from Equation~\ref{noiseGP} and the property that a GP evaluated on any coordinate is a normal distribution with the mean and variance evaluated at that coordinate.

\section{Constructing  Inference Maps}  \label{sec:updating} 

We now proceed to construct the inference maps $\mcI$ for each of the models specified in the previous section.  This construction permits  the updating of the GP prior  distributions $P$  for the nonparametric models and the normal priors $P$ on $\mathbb{R}^k$ for the parametric models through the relation that the posterior measure is given by $\mcI \circ d$, where $d$ is a data measurement.  The resulting analysis produces the familiar updating rules for the mean and covariance functions characterizing a GP.   

\subsection{The noise free inference map}  \label{sec:noiseFree}

Under a prior probability of the form $\delta_{\xv} \otimes P = \Gamma_P(\cdot \mid  \xv)$ on the hypothesis space $X \otimes Y^X$, which is a one point measure with respect to the component $X$,  the sampling distribution $\mcS_{nf}$ in Figure~\ref{fig:noiseFreeSD} can be viewed as a \emph{family} of  
deterministic $\prob$ maps---one for each point $\xv \in X$. 
\begin{figure}[H]
\begin{equation}  \nonumber
 \begin{tikzpicture}[baseline=(current bounding box.center)]

         \node (YX) at  (0,0)  {$Y^X$};
         \node (Y)    at     (4,0)  {$Y$};
         
	\draw[->,above] (YX) to node {$\mcSS = \delta_{ev_{\xv}}$} (Y);
	 \end{tikzpicture}
 \end{equation}
 \caption{The noise free sampling distributions $\mcSS$ given the prior $\delta_{\xv} \otimes P$ with the dirac measure on the $X$ component.}
 \label{fig:noiseGivenPt}
\end{figure}
\noindent
Using the property that $\delta_{ev_{\xv}}(B \mid  f) = \ch_{ev_{\xv}^{-1}(B)}(f)$ for all $B \in \sa_Y$ and $f \in Y^X$, the resulting deterministic sampling distributions (one for each $\xv \in X$) are given by 
\be \label{char}
\mcSS(B \mid  f)= \ch_{ev_{\xv}^{-1}(B)}(f). 
\ee
This special case of the prior $\delta_{\xv} \otimes P$, which is the most important one for many ML applications and the one implicitly assumed in ML textbooks, permits  a complete mathematical analysis.

Given the probability measure  $P \sim \GP(m,k)$ and $\mcSS = \delta_{ev_{\xv}}$, it follows 
the composite is the pushforward probability measure
\be
\mcSS \circ P = Pev_{\xv}^{-1},
\ee
which is the special case of Figure~\ref{fig:GPdef}   
with $X_0 = \{\xv \}$.   Using the fact that $P$ projected onto any coordinate is a normal distribution as shown in Figure~\ref{fig:inference0}, it follows that the expected mean is
\be  \nonumber
\begin{array}{lcl}
\E_{P ev_{\xv}^{-1}}(Id_Y) &=& \E_{P}(ev_{\xv}) \\
&=& m(\xv)
\end{array}
\ee
while the expected variance is
\be  \nonumber
\begin{array}{lcl}
\E_{P ev_{\xv}^{-1}}( Id_Y - \E_{P ev_{\xv}^{-1}}(Id_Y))^2) &=& \E_{P }( ev_{\xv} - \E_{P}(ev_{\xv}))^2) \\
&=& k(\xv,\xv).
\end{array}
\ee
These are precisely specified by the characterization $P ev_{\xv}^{-1} \sim \NN(m(\xv),k(\xv,\xv))$.

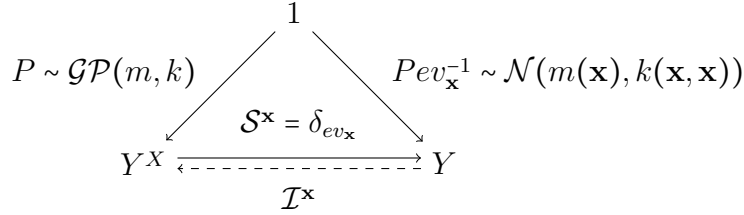
\begin{figure}[H]
\begin{equation}  \nonumber
 \begin{tikzpicture}[baseline=(current bounding box.center)]
         \node  (1)  at (0,0)    {$1$};
         \node (YX) at   (-2,-2)  {$Y^X$};
         \node (Y)  at    (2,-2)   {$Y$};
         
	\draw[->,above,left] (1) to node [xshift = -5pt,yshift=5pt] {$P \sim \GP(m,k)$} (YX);
	\draw[->, above] ([yshift=2pt] YX.east) to node [yshift=3pt] {$\mcSS =\delta_{ev_{\xv}}$} ([yshift=2pt] Y.west);
	\draw[->, below,dashed] ([yshift=-2pt] Y.west) to node [yshift=-3pt] {$\mcII$} ([yshift=-2pt] YX.east);
		
	\draw[->,above, right]  (1) to node [xshift = 5pt,yshift=5pt] {$P ev_{\xv}^{-1} \sim \NN(m(\xv),k(\xv,\xv))$} (Y);

	 \end{tikzpicture}
 \end{equation}
 \caption{The composite of the prior distribution $P \sim \GP(m,k)$  and the sampling distribution $\mcSS$ give the coordinate projections as priors on $Y$.}
 \label{fig:inference0}
 \end{figure}
 
Recall that the corresponding inference map $\mcII$ is any $\prob$ map satisfying the necessary and sufficient condition of Equation~\ref{eqn::product_rule}, {\em i.e.}, for all $\A \in \sa_{Y^X}$ and $B \in \sa_Y$,
\be \label{sufficientCondition}  
\int_{f \in \A} \mcSS( B \mid  f) \,   dP   = \int_{y \in B}\mcII(\A \mid  y) \, d(P ev_{\xv}^{-1}).  
\ee
Since the $\sigma$-algebra of $Y^X$ is generated by elements $ev_{\zv}^{-1}(A)$, for $\zv \in Y$ and $A \in \sa_Y$, we can take $\A = ev_{\zv}^{-1}(A)$ in the above expression to obtain the equivalent necessary and sufficient condition on $\mcII$ of
\be \nonumber
\int_{f \in ev_{\zv}^{-1}(A)} \mcSS(B \mid  f) \, dP = \int_{y \in B}\mcII( ev_{\zv}^{-1}(A) \mid  y) \, d(P ev_{\xv}^{-1}).       
\ee
From Equation~\ref{char}, $\mcSS(B \mid  f) = \ch_{ev_{\xv}^{-1}(B)}(f)$, so substituting this value into the  left hand side of this equation reduces that term to $P(ev_{\xv}^{-1}(B) \cap ev_{\zv}^{-1}(A))$.  Rearranging the order of the terms  it follows the condition on the inference map $\mcII$ is 
\be \nonumber
\int_{y \in B}\mcII(ev_{\zv}^{-1}(A) \mid  y) \, d(P ev_{\xv}^{-1}) = P(ev_{\xv}^{-1}(B) \cap ev_{\zv}^{-1}(A)).            
\ee
Since the left hand side of this expression is integrated with respect to the pushforward probability measure $P ev_{\xv}^{-1}$ it is equivalent to
\be \nonumber
\begin{array}{lcl}
\int_{y \in B}\mcII(ev_{\zv}^{-1}(A) \mid  y) \, d(P ev_{\xv}^{-1}) &=& \int_{f \in \, ev_{\xv}^{-1}(B)}\mcII(ev_{\zv}^{-1}(A) \mid  ev_{\xv}(f)) \, dP  \\
&=&   \int_{f \in \, ev_{\xv}^{-1}(B)}\mcII ev_{\zv}^{-1}(A \mid  ev_{\xv}(f)) \, dP.
\end{array}
\ee
In summary, if  $\mcII$ is to be an inference map for the prior $P$ and sampling distribution $\mcSS$, then it is necessary and sufficient that it satisfy the condition
\be \nonumber
  \int_{f \in \, ev_{\xv}^{-1}(B)}\mcII ev_{\zv}^{-1}(A \mid  ev_{\xv}(f)) \, dP  = P(ev_{\xv}^{-1}(B) \cap ev_{\zv}^{-1}(A)).  
\ee
 
Given a (deterministic) measurement\footnote{Meaning the arrow $d= \delta_{y}$ in Figure~\ref{fig:conditionalBM}. In general it is unnecessary to assume deterministic measurements in which case the composite $\mcII \circ d$ represents the posterior.} at $(\xv,y)$, the stochastic process \mbox{$\mcII(\cdot \mid  y):1 \rightarrow Y^X$} is the posterior of  $P \sim \GP(m,k)$.
This posterior, denoted $P^1_{Y^X} \triangleq \mcII(\cdot \mid  y)$, is generally not unique.  However we can require that the posterior  $P^1_{Y^X}$ be a GP specified by updated mean and covariance functions $m^1$ and $k^1$ respectively, which  depend upon the conditioning value $y$, so $P^1_{Y^X} \sim \GP(m^1,k^1)$.
To determine $P^1_{Y^X}$, and hence the desired inference map $\mcII$, we make a hypothesis about the updated mean and covariance functions $m^1$ and $k^1$ characterizing  $P^1_{Y^X}$ given a measurement at the pair $(\xv,y) \in X \times Y$.  Let us assume the updated mean function is of the form
\be  \label{updateMean}
m^1(\zv) = m(\zv) + \frac{k(\zv,\xv)}{k(\xv,\xv)} (y - m(\xv)) 
\ee
and the updated covariance function is of the form
\be \label{updateCov}
k^1(\wv,\zv) = k(\wv,\zv) - \frac{k(\wv,\xv) k(\xv,\zv)}{k(\xv,\xv)}. 
\ee

To prove these updated functions suffice to specify the inference map $\mcII(\cdot \mid  y) = P^1_{Y^X} \sim \GP(m^1,k^1)$ satisfying the necessary and sufficient condition we simply evaluate 
\be \nonumber
\int_{f \in \, ev_{\xv}^{-1}(B)}\mcII ev_{\zv}^{-1}(A \mid  ev_{\xv}(f)) \, dP
\ee
by substituting $\mcII(\cdot \mid  f(\xv)) = P^1(m^1,k^1)$ and verify that it yields $P(ev_{\xv}^{-1}(B) \cap ev_{\zv}^{-1}(A))$. Since $\mcII ev_{\zv}^{-1}(\cdot \mid  f(\xv))= P^1_{Y^X} ev_{\xv}^{-1}$ is a normal distribution of mean 
\be \nonumber
m^1(\zv)=  m(\xv) + \frac{k(\zv,\xv)}{k(\xv,\xv)} \cdot (f(\xv) - m(\xv))
\ee
 and  covariance 
 \be \nonumber
 k^1(\zv,\zv) = k(\zv,\zv)-\frac{k(\zv,\xv)^2}{k(\xv,\xv)}
 \ee
it follows that
\[
\int_{f \in \, ev_{\xv}^{-1}(B)}  \mcII ev_{\zv}^{-1}(A \mid  ev_{\xv}(f)) \, dP = \int_{f \in \, ev_{\xv}^{-1}(B)} \left( \frac{1}{\sqrt{2 \pi k^1(\zv,\zv)}}   \int_{v \in A} e^{\frac{-(m^1(\zv) - v)^2}{2 k^1(\zv,\zv)}} dv \right) dP
\]
which can be expanded to 
\[
 \int_{f \in \, ev_{\xv}^{-1}(B)} \left(  \frac{1}{\sqrt{2 \pi k^1(\zv,\zv)}}   \int_{v \in A} e^{\frac{-(m(\zv) + \frac{k(\zv,\xv)}{k(\xv,\xv)} (f(\xv) - m(\xv)) - v)^2}{2 k^1(\zv,\zv)}} dv \right) dP
 \]
 and equals
 \[
  \int_{y \in B} \left(  \frac{1}{\sqrt{2 \pi k^1(\zv,\zv)}}   \int_{v \in A} e^{\frac{-(m(\zv) + \frac{k(\zv,\xv)}{k(\xv,\xv)} (y - m(\xv)) - v)^2}{2 k^1(\zv,\zv)}} dv \right) dPev_{\xv}^{-1}.
\]
Using  $P_{Y^X} ev_{\xv}^{-1} \sim \NN(m(\xv),k(\xv,\xv))$ we can rewrite the expression as
\be \nonumber
\frac{1}{\sqrt{2 \pi} \mid \Omega\mid } \int_{y \in B} \int_{v \in A} e^{- \frac{1}{2} (\mathbf{u} - \overline{\mathbf{u}})^T \Omega^{-1} (\mathbf{u} - \overline{\mathbf{u}})} dv \, dy
\ee
where 
\be  \nonumber
\uv = \left( \begin{array}{c}  y \\ v \end{array} \right)  \quad \quad  \overline{\uv} =  \left( \begin{array}{c}  m(\xv) \\ m(\zv) \end{array} \right) 
\ee
and
\be  \nonumber
\Omega = \left( \begin{array}{cc} k[\xv,\xv] & k[\xv,\zv] \\ k[\zv,\xv] & k[\zv,\zv] \end{array} \right),
\ee
which we recognize as a normal distribution $\NN(\overline{\mathbf{u}}, \Omega)$.

On the other hand, we claim that

\begin{equation}  \nonumber
 \begin{tikzpicture}[baseline=(current bounding box.center)]
         \node  (1)  at (0,0)    {$1$};
         \node (YY) at   (8,0)  {$Y_{\xv} \times Y_{\zv}$,};
         
	\draw[->,above] (1) to node {$P(ev_{\xv}^{-1}(\cdot) \cap ev_{\zv}^{-1}(\cdot))$} (YY);

  \end{tikzpicture}
 \end{equation}
where $Y_{\xv}$ and $Y_{\zv}$ are two copies of $Y$, 
is also a normal distribution of mean  $\overline{u} = (m(\xv), m(\zv))$ with covariance matrix $\Omega$.\footnote{Formally the arguments should be numbered in the given probability measure as  $P(ev_{\xv}^{-1}(\#1) \cap ev_{\zv}^{-1}(\#2))$ because $ev_{\xv}^{-1}(A) \cap ev_{\zv}^{-1}(B) \ne ev_{\xv}^{-1}(B) \cap ev_{\zv}^{-1}(A)$. However the subscripts can be used to identify which component measurable sets are associated with each argument. }  To prove our claim consider the $\prob$ diagram in Figure~\ref{fig:coolProof}  where $X_0 = \{\xv,\zv\}$, $\iota:X_0 \hookrightarrow X$ is the inclusion map referenced in Section~\ref{sec:GP}, and $ev_{\xv} \times ev_{\zv}$ is an isomorphism between the two different representations of the set of all measurable functions $Y^{X_0}$ alluded to in the second paragraph of Section~\ref{sec:functions}. 

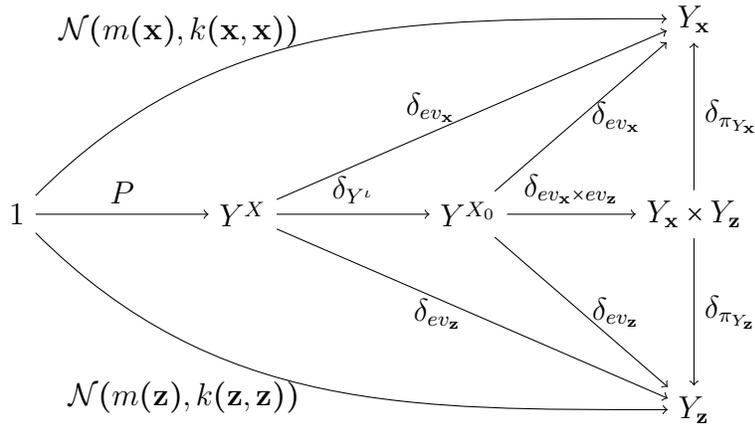
\begin{figure}[H]
\begin{equation}  \nonumber
 \begin{tikzpicture}[baseline=(current bounding box.center)]
         \node  (1)  at (0,0)    {$1$};
         \node (YX) at   (3,0)  {$Y^X$};
         \node (Y1) at  (9,2.6)   {$Y_{\xv}$};
           \node (Y2) at  (9,-2.6)   {$Y_{\zv}$};
           \node (YY) at (6,0)    {$Y^{X_0}$};
           \node (YxY) at (9,0)  {$Y_{\xv} \times Y_{\zv}$};
         
	\draw[->,above] (1) to node {$P$} (YX);
         \draw[->,above,above] (YX) to node {$\delta_{Y^{\iota}}$} (YY);            
	\draw[->,above, left] (YX) to node [xshift=-2pt, yshift=3pt] {$\delta_{ev_{\xv}}$} (Y1);
	\draw[->,below, left] (YX) to node {$\delta_{ev_{\zv}}$} (Y2);
         \draw[->, right] (YxY) to node {$\delta_{\pi_{Y_{\xv}}}$} (Y1);
          \draw[->, right] (YxY) to node {$\delta_{\pi_{Y_{\zv}}}$} (Y2);
          
          \draw[->, right] (YY) to node {$\delta_{ev_{\xv}}$} (Y1);
          \draw[->, right] (YY) to node {$\delta_{ev_{\zv}}$} (Y2);

          \draw[->,above] (YY) to node {$\delta_{ev_{\xv} \times ev_{\zv}}$} (YxY);
          
          \draw[->,out=45,in=180,left] (1) to node [xshift=-5pt,yshift=3pt] {$\NN(m(\xv),k(\xv,\xv))$} (Y1);
          \draw[->,out=-45,in=180,left] (1) to node [xshift=-5pt,yshift=-3pt]  {$\NN(m(\zv),k(\zv,\zv))$} (Y2);   
  \end{tikzpicture}
 \end{equation}
 \caption{Proving the joint distribution $\delta_{ev_{\xv} \times ev_{\zv}} \circ \delta_{Y^{\iota}}  \circ  P = P(ev_{\xv}^{-1}(\cdot) \cap ev_{\zv}^{-1}(\cdot)))$ is a normal distribution $\NN(\overline{\mathbf{u}}, \Omega)$.}
 \label{fig:coolProof}
 \end{figure}

 The diagram in Figure~\ref{fig:coolProof} commutes because
 $\delta_{\pi_{Y_{\xv}}} \circ \delta_{ev_{\xv} \times ev_{\zv}}= \delta_{\xv}$ and  $\delta_{\pi_{Y_{\zv}}} \circ \delta_{ev_{\xv} \times ev_{\zv}}= \delta_{\zv}$
 while, using $(ev_{\xv} \times ev_{\zv}) \circ Y^{\iota} = (ev_{\xv},ev_{\zv})$, 
 \be \nonumber
\begin{array}{lcl}
(\delta_{ev_{\xv} \times ev_{\zv}} \circ \delta_{Y^{\iota}} \circ P) (A \times B) &=& \int_{f \in Y^{X}}  \underbrace{\delta_{(ev_{\xv},ev_{\zv})}(A \times B \mid  f) }_{= (\ch_{A \times B})(ev_{\xv}, ev_{\zv})(f) } \, d P \\
&=& \int_{f \in Y^{X}} ( \ch_{ev_{\xv}^{-1}(A)} \cdot \ch_{ev_{\zv}^{-1}(B)})(f) \, d P \\
&=& \int_{f \in Y^{X}} \ch_{ev_{\xv}^{-1}(A) \cap ev_{\zv}^{-1}(B)}(f) \, d P  \\
&=&   P( ev_{\xv}^{-1}(A) \cap ev_{\zv}^{-1}(B)).
\end{array}
\ee

Moreover, the covariance $k$ of $P(ev_{\xv}^{-1}(\cdot) \cap ev_{\zv}^{-1}(\cdot)))$ is represented by the matrix $\Omega$ because by definition of $P$, in terms of $m$ and $k$, its restriction to $Y^{X_0}\cong Y_{\xv} \times Y_{\zv}$ has covariance $k\mid _{X_0} \cong \Omega$.

Consequently the necessary and sufficient condition for $\mcII = P^1_{Y^X} \sim \GP(m^1,k^1)$  to be an inference map is satisfied by the projection of $P_{Y^X}^1$ onto any single coordinate $\zv$ which corresponds to the restriction of $P_{Y^X}^1$ via the deterministic map $Y^{\iota}:Y^X \rightarrow Y^{X_0}$ with  $X_0=\{\zv\}$ as in Figure~\ref{fig:GPdef}.  But this procedure immediately extends to all finite subsets $X_0 \subset X$ using matrix algebra and consequently we conclude that the necessary and sufficient condition for $\mcII$ to be an inference map for the prior $P$ and the noise free sampling distribution $\mcSS$ is satisfied.

Writing the prior  GP as $P\sim \GP(m^0,k^0)$ the recursive updating equations are
\be \label{mRecursive}
m^{i+1}\left( \zv \mid  (\xv_i,y_i) \right) = m^{i}(\zv) + \frac{k^i(\zv,\xv_i)}{k^i(\xv_i,\xv_i)} (y_i - m^i(\xv_i)) \quad  \textrm{ for }  i=0,\ldots,N-1
\ee
and 
\be \label{kRecursive}
k^{i+1}((\wv,\zv) \mid  (\xv_i,y_i)) = k^i(\wv,\zv) - \frac{k^i(\wv,\xv_i) k^i(\xv_i,\zv)}{k^i(\xv_i,\xv_i)}  \quad \textrm{ for }  i=0,\ldots,N-1
\ee
where the terms on the left denote the posterior mean and covariance functions of $m^i$ and $k^i$ given a new measurement $(\xv_i,y_i)$.  These expressions  coincide with the standard formulas written for $N$ arbitrary measurements $\{(\xv_i,y_i)\}_{i=1}^{N-1}$, with $X_0=(\xv_0,\ldots,\xv_{N-1})$  a finite set of independent points of $X$ with corresponding measurements $\yv^T = (y_0,y_1,\ldots,y_{N-1})$,
\be \label{meanUpdate}
\tilde{m}(\zv \mid  X_0) = m(\zv) + K(\zv,X_0) K(X_0,X_0)^{-1}  (\yv - m(X_0))
\ee
where  $m(X_0) = (m(\xv_0),\ldots,m(\xv_{N-1}))^T$, and 
\be \label{coUpdate}
\tilde{k}( (\wv,\zv) \mid  X_0) = k(\wv,\zv) - K(\wv,X_0) K(X_0,X_0)^{-1} K(X_0,\zv)
\ee
where  $K(\wv,X_0)$ is the row vector with components $k(\wv,\xv_i)$, $K(X_0,X_0)$ is the matrix with components $k(\xv_i,\xv_j)$, and $K(X_0,\zv)$ is a column vector with components $k(\xv_i,\zv)$.\footnote{When the points are not independent then one can use a perturbation method or other procedure to avoid degeneracy.}  The notation $\tilde{m}$ and $\tilde{k}$ is used to differentiate these standard expressions from ours above.  Equations~\ref{meanUpdate} and \ref{coUpdate} are a computationally efficient way to keep track of the updated mean and covariance functions.  One can easily verify the recursive equations determine the standard equations using induction.  
 
 A review of the derivation of $P^1_{Y^X}$ indicates that the posterior $P^1_{Y^X} \sim \GP(m^1,k^1)$ is actually parameterized by the measurement $(\xv_1,y_1)$ because the above derivation holds for any measurement $(\xv_1,y_1)$ and this pair of values uniquely determines $m^1$ and $k^1$ through the Equations~\ref{mRecursive} and \ref{kRecursive}, or equivalently Equations ~\ref{meanUpdate} and \ref{coUpdate}, for a single measurement.

By the SMwCC structure of $\prob$ each parameterized GP $P_{Y^X}^1$ can be put  into the  bijective correspondence shown in Figure~\ref{fig:smwcc}, where
\be  \nonumber
\begin{array}{lcl}
\overline{P_{Y^X}^1}(B \mid  (z,(\xv,y))) &=& P_{Y^X}^1( ev_{\zv}^{-1}(B) \mid  (\xv,y)) \quad \quad \forall B \in \sa_Y, \zv \in X, y \in Y \\
&=& P_{Y^X}^1 ev_{\zv}^{-1} (B \mid  (\xv,y))  \\
&=& \frac{1}{\sqrt{ 2 \pi k^1(\zv,\zv)}} \int_{v \in B} e^{ -\frac{(v - m^1(z))^2}{2 k^1(\zv,\zv)}} \, dv \\
&=& \frac{1}{\sqrt{ 2 \pi \frac{k(\xv,\xv) k(\zv,\zv) - k(\xv,\zv)^2}{k(\xv,\xv)}}} \int_{v \in B} e^{ -\frac{(v - (m(z) + \frac{k(\zv,\xv)}{ k(\xv,\xv)}(y -\xv))^2}{2 \frac{k(\xv,\xv) k(\zv,\zv) - k(\xv,\zv)^2}{k(\xv,\xv)}}} \, dv
\end{array}
\ee   
which is a probability measure on $Y$ conditioned on $\zv$ and parameterized by the pair $(\xv,y)$.  Iterating this process we obtain the viewpoint that the parameterized process $P_{Y^X}( ev_{\zv}^{-1}(B) \mid  \{ (\xv_i,y_i)\}_{i=1}^N)$ is a posterior conditional probability parameterized over $N$ measurements.

  \begin{figure}[H]
\begin{equation}  \nonumber
 \begin{tikzpicture}[baseline=(current bounding box.center)]
         \node  (1)  at (-.2,0)    {$X \otimes Y$};
         \node  (YX)  at  (2.7,0)   {$Y^X$};
         \node (X) at   (-.2,-1)  {$X \otimes (X \otimes Y)$};
         \node (Y) at  (2.7,-1)  {$Y$};
         \node  (pt1)    at (-1,-.5)   {$$};
         \node  (pt2)    at (3.5,-.5)   {$$};
         \node  (pt3)    at (4.1,.25)   {$$};
         \node  (pt4)    at (4.1,-1.5)   {$$};
         
	\draw[->,above] (1) to node  {$P_{Y^X}^1$} (YX);
	\draw[->,below] (X) to node {$\overline{P_{Y^X}^1}$} (Y);
	
	\draw[-] (pt1) to node {$$} (pt2);
	
	\pgfsetlinewidth{.5ex}
	\draw[<->] (pt3) to node {$$} (pt4);

	 \end{tikzpicture}
 \end{equation}
 \caption{Each GP $P_{Y^X}^1$, which is parameterized by a measurement $(\xv,y) \in X \otimes Y$, determines a conditional $\overline{P_{Y^X}^1}$.}
 \label{fig:smwcc}
 \end{figure}

\subsection{The noisy measurement inference map}  When the measurement model has additive Gaussian noise which is iid on each slice $\xv \in X$, the resulting inference map is easily given by observing that from Equation~\ref{compositeMP},  the composite $\delta_{ev_{\xv}} \circ N \circ P \sim \NN(m(\xv),k(\xv,\xv) + k_N(\xv,\xv))$. Thus, the noisy sampling distribution along with the prior $P \sim \GP(m,k)$ can be viewed as a noise free distribution $P \sim \GP(m, \kappa)$ on $Y^X$, where $\kappa \triangleq k +k_N$, and $k_N$ is given by equation~\ref{noiseGP}.  This is clear from the composite of Figure~\ref{fig:GPComposite} with the Dirac measure $\delta_{\xv}$ on the $X$ component.   Now the noisy measurement inference map for the Bayesian model with prior $P$ and sampling distribution $\mcSS = \delta_{\xv} \circ N$, as shown in Figure~\ref{fig:noisyGA}, can be determined by decomposing it into two simpler Bayesian problems whose inference maps are (1)  trivial (the identity map)  and (2) already known.

\begin{figure}[H]
\begin{equation}  \nonumber
 \begin{tikzpicture}[baseline=(current bounding box.center)]

         \node (1) at (3,2) {$1$};
         \node (YX) at  (0,0)  {$Y^X$};
         \node (YX2) at  (3,0)  {$Y^X$};
         \node (Y)    at     (6,0)  {$Y$};
         
         \draw[->,above] (1) to node [xshift = -25pt] {$P \sim \NN(m,k)$} (YX);
         \draw[->,above] (YX) to node {$N$} (YX2);
         \draw[->,right,dashed] (1) to node {$N \circ P$} (YX2);
         \draw[->,right,dashed] (1) to node [xshift = 3pt] {$\delta_{\xv} \circ N \circ P \sim \NN(m(\xv),\underbrace{k(\xv,\xv) + k_N(\xv,\xv)}_{=\kappa(\xv,\xv)})$} (Y);
	\draw[->,above] (YX2) to node {$\delta_{ev_{\xv}}$} (Y);
	\draw[->,below,out=-45,in=225,looseness=.5] (YX) to node {$\mcSS$} (Y);
	
	\node  (com)  at  (6,-2)   {$\Downarrow$ Decomposition};
	
         \node (13) at (3,-3) {$1$};
         \node (YX3) at  (0,-5)  {$Y^X$};
         \node (YX23) at  (3,-5)  {$Y^X$};
         
         \draw[->,above] (13) to node [xshift = -25pt] {$P \sim \NN(m,k)$} (YX3);
         \draw[->,above] ([yshift=2pt] YX3.east) to node {$N$} ([yshift=2pt] YX23.west);
         \draw[->,right,dashed] (13) to node {$N \circ P$} (YX23);
         \draw[->,below,dashed] ([yshift=-2pt] YX23.west) to node {$\mcI_*$} ([yshift=-2pt] YX3.east);
	
         \node (14) at (7,-3) {$1$};
         \node (YX4) at  (7,-5)  {$Y^X$};
         \node (Y4) at  (10,-5)  {$Y$};
         
         \draw[->,above] (14) to node [xshift = -25pt] {$N \circ P$} (YX4);
         \draw[->,above] ([yshift=2pt] YX4.east) to node {$\delta_{\xv}$} ([yshift=2pt] Y4.west);
         \draw[->,right,dashed] (14) to node {$\delta_{\xv} \circ N \circ P$} (Y4);
         \draw[->,below,dashed] ([yshift=-2pt] Y4.west) to node {$\mcI_{nf}$} ([yshift=-2pt] YX4.east);
         
	 \end{tikzpicture}
 \end{equation}
 \caption{Splitting the Gaussian additive noise Bayesian model (top diagram)  into two separate Bayesian models (bottom two diagrams) and composing the inference maps for these two simple Bayesian models gives the inference map for the original Gaussian additive Bayesian model.}
 \label{fig:noisyGA}
\end{figure}
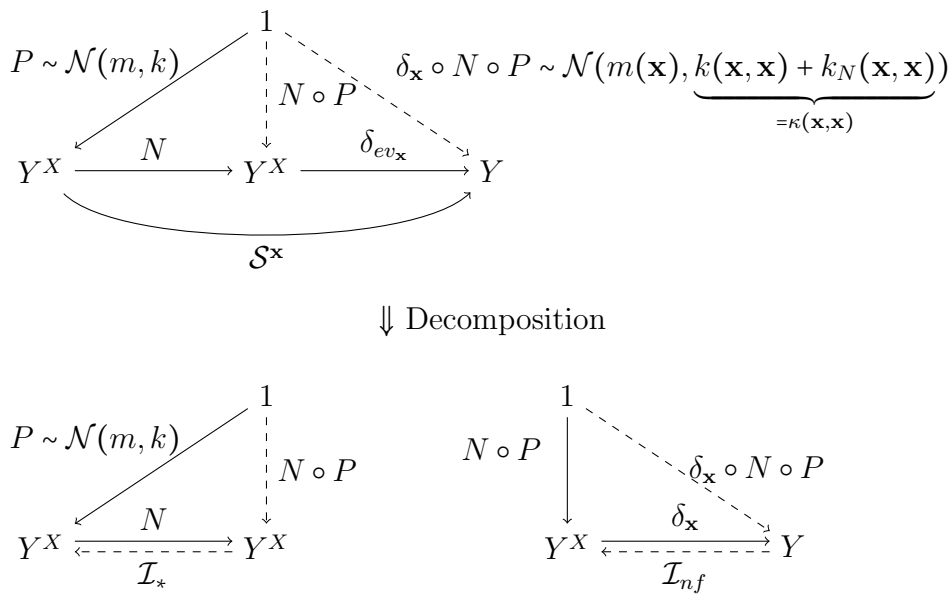
\noindent
Observe that the composition of the two bottom diagrams is the top diagram.  The bottom diagram on the right is a noise free Bayesian model with GP prior $N \circ P$ and sampling distribution $\delta_{\xv}$ whose inference map $\mcI_{nf}$ we have already determined analytically in Section~\ref{sec:noiseFree}.  Given a measurement $y \in Y$ at $\xv \in X$, the inference map is given by the updating Equations~\ref{updateMean} and \ref{updateCov} for the mean and covariance functions characterizing the GP on $Y^X$.  The resulting posterior GP on $Y^X$ can then be viewed as a measurement on $Y^X$ for the bottom left diagram, which is a Bayesian model with prior $P$ and sampling distribution $N$.  The inference map $\mcI_{\star}$ for this diagram is the identity map on $Y^X$, $\mcI_{\star} = \delta_{Id_{Y^X}}$.  This is easy to verify using Bayes product rule (Equation~\ref{productRule}), $\int_{a \in A} N(B\mid a) \, dP = \int_{f \in B} \delta_{Id_{Y^X}}(A \mid  f) \, d(N \circ P)$, for any $A, B \in \sa_{Y^X}$.  Composition of these two inference maps, $\mcI_{nf}$ and $\mcI_{\star}$ then yields the resulting inference map for the Gaussian additive noise Bayesian model.

With this observation both of the recursive updating schemes given by Equations~\ref{mRecursive} and \ref{kRecursive} are valid for the Gaussian additive noise model with $k$ replaced by $\kappa$.  The corresponding standard expressions for the noisy model are then 
\be \nonumber
\tilde{m}(\zv \mid  X_0) = m(\zv) + K(\zv,X_0) K(X_0,X_0)^{-1} (\yv - m(X_0))
\ee
and 
\be  \nonumber
\tilde{\kappa}( (\wv,\zv) \mid  X_0) = \kappa(\wv,\zv) - K(\wv,X_0) K(X_0,X_0)^{-1} K(X_0,\zv),
\ee
where the quantities like $K(\wv,X_0)$ are as defined previously (following Equation~\ref{coUpdate}) except now $k$ is replaced by $\kappa$.  For $\wv \ne \zv$ and neither among the measurements $X_0$ these expressions, upon substituting in for $\kappa$, reduce to the  familiar expressions
\be \nonumber
\tilde{m}(\zv \mid  X_0) = m(\zv) + K(\zv,X_0) (K(X_0,X_0)+ \sigma^2 \textit{I})^{-1} (\yv - m(X_0))
\ee
and 
\be \nonumber
\tilde{k}( (\wv,\zv) \mid  X_0) = k(\wv,\zv) - K(\wv,X_0) (K(X_0,X_0)+ \sigma^2 \textit{I})^{-1} K(X_0,\zv),
\ee
which provide for a computationally efficient way to compute the mean and covariance of a GP given a finite number of measurements.

\subsection{The inference map for  parametric models}   \label{sec:parInf}
Under the prior $\delta_{\xv} \otimes P$ on the hypothesis space in the parametric model, Figure~\ref{fig:parametricModel},  the parametric sampling distribution model can be viewed as  a family of models, one for each $\xv \in X$, given by the diagram in Figure~\ref{fig:noiseFreeParametric}.

\begin{figure}[H]
\begin{equation}  \nonumber
 \begin{tikzpicture}[baseline=(current bounding box.center)]

         \node (Rp) at (-3,0) {$\Rp$};
         \node (YX) at  (0,0)  {$Y^X$};
         \node (YX2) at  (3,0)  {$Y^X$};
         \node (Y)    at     (6,0)  {$Y$};
         
         \draw[->,above] (Rp) to node {$\delta_{i}$} (YX);
         \draw[->,above] (YX) to node {$N$} (YX2);
	\draw[->,above] (YX2) to node {$\delta_{ev_{\xv}}$} (Y);
	\draw[->,below,out=-45,in=225,looseness=.5] (Rp) to node {$\mcSS_p$} (Y);
	 \end{tikzpicture}
 \end{equation}
 \caption{The Gaussian additive noise parametric sampling distributions $\mcSS_p$ viewed as a family of sampling distributions, one for each $\xv \in X$.}
 \label{fig:noiseFreeParametric}
\end{figure}
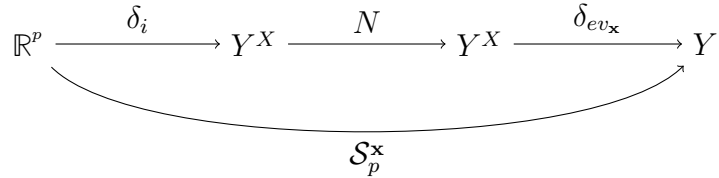

The sampling distribution can be computed as
\be  \nonumber
\begin{array}{lcl}
\mcSS_p(B \mid  \av) &=& (\delta_{ev_{\xv}} \circ N \circ \delta_{i})(B \mid  \av) \\
&=& \int_{f \in Y^X} (\delta_{ev_{\xv}} \circ N)(B \mid  f) \, d\underbrace{(\delta_i)_{\av}}_{\delta_{F_{\av}}} \\
&=& (\delta_{ev_{\xv}} \circ N)(B \mid  F_{\av}) \\
&=& N(ev_{\xv}^{-1}(B) \mid  F_{\av}) .
\end{array}
\ee
Because  $N_{F_{\av}} \sim GP(F_{\av},k_N)$, it follows that 
\be  \nonumber
N(ev_{\xv}^{-1}(\bullet) \mid  F_{\av}) = N_{F_{\av}}ev_{\xv}^{-1} \sim \NN(F_{\av}(\xv), \sigma^2)
\ee
and consequently 
\be  \nonumber
\mcSS_p(B \mid  \av) = \frac{1}{\sqrt{2 \pi} \sigma} \int_{B} e^{-\frac{(y-F_{\av}(\xv))^2}{2 \sigma^2}} \, dy.
\ee

Taking the prior $P:1 \rightarrow \Rp$ as  a normal distribution with  mean $\mv$ and covariance function  $k$, it follows that the composite $\mcSS_p \circ P \sim \NN(F_{\mv}(\xv),k(\xv,\xv)+\sigma^2)$
while the inference map $\mcII_p$ satisfies, for all $B \in \sa_Y$ and all $\A \in \sa_{\mathbb{R}^p}$, 
\be   \nonumber
\int_{\mathbf{a} \in \A} \mcSS_p( B \mid  \mathbf{a}) \,   dP   = \int_{y \in B}\mcII_p(\A \mid  y) \, d(\mcSS_p \circ P). 
\ee

To determine this inference map $\mcII_p$ it is necessary to require the parametric map
\be \nonumber
\begin{array}{lclcl}
i&:&\Rp  & \longrightarrow & Y^X  \\
&:& \av & \mapsto &  i_{\av}
\end{array}
\ee
be an injective linear homomorphism.  Under this condition, which can often be achieved simply by eliminating redundant modeling parameters, we can explicitly determine the inference map for the parameterized model, denoted $\mcII_p$, by decomposing it  into two  inference maps as displayed in the diagram in Figure~\ref{fig:inferenceMap}.

\begin{figure}[H]
\begin{equation}  \nonumber
 \begin{tikzpicture}[baseline=(current bounding box.center)]

         \node (1)    at  (0,2)   {$1$};
         \node (Rp) at (-3,0) {$\Rn$};
         \node (YX) at  (0,0)  {$Y^X$};
         \node (Y)  at (3,0)  {$Y$};
         \node (com) at (6,0)  {$\mcSS_{p} = \mcSS_n \circ \delta_{i}$};
         
         \draw[->,right] (1) to node {$Pi^{-1}$} (YX);
         \draw[->,left,above] (1) to node {$P$} (Rp);
         \draw[->,above] ([yshift=2pt] Rp.east) to node {$\delta_{i}$} ([yshift=2pt] YX.west);
         \draw[->,below] ([yshift=-2pt] YX.west) to node {$\mcI_{\star}$} ([yshift=-2pt] Rp.east);

         \draw[->,above] ([yshift=2pt] YX.east) to node {$\mcSS_n$} ([yshift=2pt] Y.west);
         \draw[->,below] ([yshift=-2pt] Y.west) to node {$\mcII_n$} ([yshift=-2pt] YX.east);     
         \draw[->,above,right] (1) to node [yshift=4pt] {$\mcSS_n \circ Pi^{-1}$} (Y);    
         
         \draw[->,out=225,in=-45,looseness=.5,below] (Y) to node {$\mathcal{I}^{\xv}_{p}$} (Rp);

	 \end{tikzpicture}
 \end{equation}
 \caption{The inference map for the parametric model is a composite of two inference maps.}
 \label{fig:inferenceMap}
\end{figure}

We first show the stochastic process $Pi^{-1}$ is a GP and by taking the sampling distribution $\mcSS_n = \delta_{ev_{\xv}} \circ N$ as the noisy measurement model we can use the result of the previous section to provide us with the inference map $\mcII_n$ in Figure~\ref{fig:inferenceMap}.   

\begin{lemma} Let $\kv$ be the matrix representation of the covariance function $k$.  The
the push forward of $P ~ \NN(\mathbf{m},k)$ by $i$ is a GP $Pi^{-1} \sim \GP(i_{\mv}, \hat{k})$, where $\hat{k}(\uv,\vv) = \uv^T \kv \vv$.
\end{lemma}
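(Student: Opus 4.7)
The plan is to reduce everything to the classical fact that the pushforward of a multivariate Gaussian by a linear map is again Gaussian. By the definition of a Gaussian process from Section~\ref{sec:GP}, it suffices to show that for every finite subset $X_0 = \{x_1,\ldots,x_n\} \subset X$ with inclusion $\iota : X_0 \hookrightarrow X$, the pushforward $(Pi^{-1})(Y^{\iota})^{-1}$ on $Y^{X_0}$ is the multivariate normal $\NN(i_{\mv}|_{X_0},\, \hat{k}|_{X_0 \times X_0})$. By associativity of pushforwards, this distribution equals $P\,(Y^{\iota}\circ i)^{-1}$, so the original question becomes a statement about pushing a normal measure on $\Rp$ forward by the composite $T := Y^{\iota}\circ i : \Rp \to Y^{X_0}$.

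The first step is to identify $T$ as a linear map between finite-dimensional Euclidean spaces. Linearity of $i$ is assumed, and $Y^{\iota}$ (pointwise restriction) is linear because the vector-space structure on $Y^X$ is defined pointwise; measurability of any such linear map with Borel $\sigma$-algebras is automatic. Writing $f_\ell := i(\e_\ell)$ for the images of the canonical basis vectors and
\[
\mathbf{b}_{x_j} := (f_1(x_j),\ldots,f_p(x_j))^T \in \Rp ,
\]
the $j$-th coordinate $ev_{x_j}\circ i$ of $T$, under the canonical identification $Y^{X_0} \cong \prod_{j=1}^n Y_{x_j} \cong \Rv{n}$, satisfies $(ev_{x_j}\circ i)(\av) = i(\av)(x_j) = \mathbf{b}_{x_j}^T \av$. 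Thus $T$ is represented by the $n\times p$ matrix $\mathbf{B}$ whose $j$-th row is $\mathbf{b}_{x_j}^T$.

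Next I would invoke the standard Gaussian-pushforward formula: if $Q \sim \NN(\mv,\kv)$ on $\Rp$ and $\mathbf{B}$ is an $n\times p$ matrix, then the pushforward of $Q$ by the linear map with matrix $\mathbf{B}$ is $\NN(\mathbf{B}\mv,\ \mathbf{B}\kv\mathbf{B}^T)$ on $\Rv{n}$. Applied to our $T$, this gives $P(Y^{\iota}\circ i)^{-1} \sim \NN(\mathbf{B}\mv,\,\mathbf{B}\kv\mathbf{B}^T)$. The $j$-th component of $\mathbf{B}\mv$ is $\mathbf{b}_{x_j}^T\mv = i(\mv)(x_j) = i_{\mv}(x_j)$, so the mean matches $i_{\mv}|_{X_0}$. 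The $(j,k)$ entry of $\mathbf{B}\kv\mathbf{B}^T$ is $\mathbf{b}_{x_j}^T \kv \mathbf{b}_{x_k}$, which is precisely $\hat{k}(x_j,x_k)$ once we identify the abstract arguments $\uv,\vv$ in $\hat{k}(\uv,\vv)=\uv^T \kv \vv$ with the evaluation vectors $\mathbf{b}_{x_j},\mathbf{b}_{x_k}$. Since this holds for every finite $X_0$, $Pi^{-1}$ is a GP with the claimed mean and covariance.

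The main obstacle is bookkeeping rather than analysis: one has to verify that the categorical pushforward through $\delta_{Y^{\iota}}$ coincides with the ordinary pushforward by the set-level restriction map, and to justify the identification of the evaluation vectors $\mathbf{b}_{x_j}$ with the rows of the matrix of $T$. Once these identifications are made, the lemma reduces to a one-line application of the standard linear-image formula for multivariate Gaussians, and no further measure-theoretic work is needed.
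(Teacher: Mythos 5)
Your proposal is correct and follows essentially the same route as the paper: reduce to finite subsets $X_0$ via the GP definition, observe that $(ev_{\xv_1}\times\cdots\times ev_{\xv_{n'}})\circ Y^{\iota}\circ i$ sends $\av \mapsto (i_{\av}(\xv_1),\ldots,i_{\av}(\xv_{n'}))$, and apply the linear-image formula for multivariate Gaussians. You are in fact somewhat more careful than the paper, which simply asserts the pushforward is $\NN(X_0^T\mv, X_0^T\kv X_0)$ without spelling out the matrix of the composite; your explicit matrix $\mathbf{B}$ with rows $(f_1(\xv_j),\ldots,f_p(\xv_j))$ makes precise the identification of the arguments of $\hat k$ with the evaluation vectors, which the paper's notation glosses over.
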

\begin{proof} We need to show that the push forward of $Pi^{-1}$ by the restriction map \mbox{$Y^{\iota}: Y^X \longrightarrow Y^{X_0}$} is a normal distribution for any finite subspace \mbox{$\iota: X_0 \hookrightarrow X$}.  Consider the commutate  diagram in Figure~\ref{fig:lemma}, where $Y_{\xv}$ is a copy of $Y$, $X_0 = (\xv_1,\ldots,\xv_{n'})$,  and   
\[
ev_{\xv_1} \times \ldots \times ev_{\xv_{n'}}:Y^{X_0} \rightarrow  \prod_{\xv \in X_0} Y_{\xv}
\]
is the canonical isomorphism. 

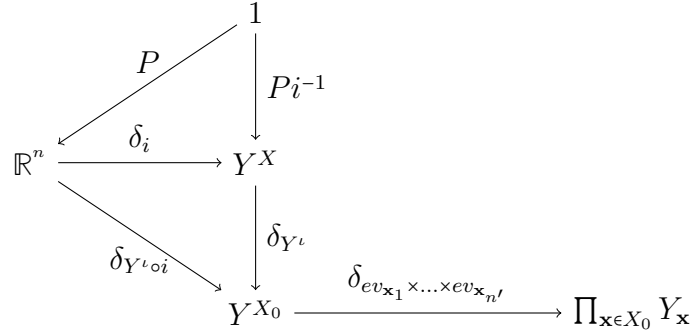
\begin{figure}[H]
\begin{equation}  \nonumber
 \begin{tikzpicture}[baseline=(current bounding box.center)]

         \node (1)    at  (0,2)   {$1$};
         \node (Rp) at (-3,0) {$\Rn$};
         \node (YX) at  (0,0)  {$Y^X$};
         \node (YX0)    at     (0,-2)  {$Y^{X_0}$};
         \node (Yprod)  at  (5,-2)  {$\prod_{\xv \in X_0} Y_{\xv}$};
         
         \draw[->,right] (1) to node {$Pi^{-1}$} (YX);
         \draw[->,left,above] (1) to node {$P$} (Rp);
         \draw[->,above] (Rp) to node {$\delta_{i}$} (YX);
         \draw[->,right] (YX) to node {$\delta_{Y^{\iota}}$} (YX0);
	\draw[->,left, below] (Rp) to node {$\delta_{Y^{\iota} \circ i}$} (YX0);
         \draw[->,above] (YX0) to node {$\delta_{ev_{\xv_1} \times \ldots \times ev_{\xv_{n'}}}$} (Yprod);
	 \end{tikzpicture}
 \end{equation}
 \caption{The restriction of  $Pi^{-1}$.}
 \label{fig:lemma}
\end{figure}

 The composite of the measurable maps
\be
\left( (ev_{\xv_1} \times \ldots \times ev_{\xv_{n'}}) \circ Y^{\iota} \circ i \right)(\av) = (i_{\av}(\xv_1),\ldots,i_{\av}(\xv_{n'}))
\ee
from which  it follows that  the composite map $\delta_{ev_{\xv_1} \times \ldots \times ev_{\xv_{n'}}} \circ \delta_{Y^{\iota}} \circ Pi^{-1} \sim \NN(  X_0^T \mv, X_0^T \mathbf{k} X_0)$.         
\end{proof}

Now the diagram

\begin{figure}[H]
\begin{equation}  \nonumber
 \begin{tikzpicture}[baseline=(current bounding box.center)]

         \node (1)    at  (-1.5,2)   {$1$};
         \node (Rn) at (-3,0) {$\Rn$};
         \node (YX) at  (0,0)  {$Y^X$};
         
         \draw[->,right] (1) to node {$Pi^{-1}$} (YX);
         \draw[->,left] (1) to node {$P$} (Rn);
         \draw[->,above] ([yshift=2pt] Rn.east) to node {$\delta_{i}$} ([yshift=2pt] YX.west);
         \draw[->, below,dashed] ([yshift=-2pt] YX.west) to node {$\mcI_{\star}$} ([yshift=-2pt] Rn.east);         

	 \end{tikzpicture}
 \end{equation}
\end{figure}
\noindent
with the sampling distribution for this Bayesian problem  as $\delta_{i}$.   Let $\e_j^T = (0, \ldots, 0,1,0,\ldots,0)$ be the $j^{th}$ unit vector of $\Rp$ and let $i_{\e_j} = f_j \in Y^X$.  The elements $\{f_j\}_{j=1}^p$ form the components of a basis for the image of $i$ by the assumed  injective property of $i$.  Let this finite basis have a dual basis $\{f_j^*\}_{j=1}^p$ so that $f_k^*(f_j) = \delta_{k}(j)$.

 Consider the measurable map
\be \nonumber
\begin{array}{lclcl}
f_1^* \times \ldots \times f_p^* &:& Y^X & \rightarrow & \Rp \\
&;&g& \mapsto & ( f^*_1(g),\ldots, f_p^*(g)),
\end{array}
\ee
Using the linearity of the parameter space $\Rp$ it follows $\av = \sum_{i=1}^p a_i \e_i$ and consequently
\be \nonumber
\begin{array}{lcl}
\left((f_1^* \times \ldots \times f_p^*) \circ i \right)(\av) &=& ( f_1^*(i_{\av}),\ldots, f_p^*(i_{\av}) ) \\
&=& \av \quad \textrm{ using }f_j^*(i_{\av}) = f_j^*( \sum_{k=1}^p a_k f_k) =a_j
\end{array}
\ee
and hence $(f_1^* \times \ldots \times f_p^*) \circ i = id_{\Rp}$ in $\M$.
Now it follows the corresponding inference map $\mcI_{\star} = \delta_{f_1^* \times \ldots \times f_p^*}$  
because the necessary and sufficient condition for $\mcI_{\star}$ is given, for all $ev_{\zv}^{-1}(B) \in \sa_{Y^X}$ (which generate $\sa_{Y^X}$) and all $\A \in \sa_{\Rn}$,  by
\be \label{split}
\int_{\av \in \A} \delta_i(ev_{\zv}^{-1}(B)\mid  \av) \, dP = \int_{g\in ev_{\zv}^{-1}(B)} \mcI_{\star}(\A \mid  g) \, dPi^{-1}
\ee
with the left hand term reducing to the expression
\be  \nonumber
\int_{ \mathbf{a} \in \A} \ch_{i^{-1}(ev_{\zv}^{-1}(B))}(\mathbf{a}) \, dP = P(i^{-1}(ev_{\zv}^{-1}(B)) \cap \A).
\ee
On the other hand, using $\mcI_{\star} = \delta_{f_1^* \times \ldots \times f_p^*}$, the right hand term of Equation~\ref{split} also reduces to the same expression since
\be \nonumber
\begin{array}{lcl}
\int_{g\in ev_{\zv}^{-1}(B)} \delta_{f_1^* \times \ldots \times f_p^*}(\A \mid  g) \, d(Pi^{-1}) &=& \int_{\av \in i^{-1}(ev_{\zv}^{-1}(B))} \ch_{((f_1^* \times \ldots \times f_p^*) \circ i)^{-1}(\A)}(\av) \, dP \\
&=&  \int_{\av \in i^{-1}(ev_{\zv}^{-1}(B))} \ch_{\A}(\av) \, dP \\
&=& P(i^{-1}(ev_{\zv}^{-1}(B)) \cap \A)
\end{array}
\ee
thus proving $\mcI_{\star} = \delta_{f_1^* \times \ldots \times f_p^*}$.

Taking 
\be \nonumber
\mcII_p = \mcI_{\star} \circ \mcII_n,
\ee
it follows that for a given measurement $(\xv,y)$ that the composite is  
\be
\mcII_p = \mcII_n( (f_1^* \times \ldots \times f_p^*)^{-1}(\cdot) \mid  y)
\ee
which is the push forward measure of the GP $\mcII_n( \cdot \mid  y) \sim \GP(i_{\mv}^1, \kappa^1)$ where  (as defined previously) $\kappa = k + k_N$ and
\be
i_{\mv}^1(\zv) = i_{\mv}(\zv) + \frac{\kappa(\zv,\xv)}{\kappa(\xv,\xv)}(y - i_{\mv}(\xv)) 
\ee
and
\be
\kappa^1(\uv,\vv) = \kappa(\uv,\vv) - \frac{ \kappa(\uv,\xv) \kappa(\xv, \vv)}{ \kappa(\xv,\xv)}.
\ee
This GP projected onto any finite subspace $\iota: X_0 \hookrightarrow X$ is a normal distribution and,  for $X_0 = \{\xv_1,\xv_2,\ldots,\xv_n\}$, it follows that
\[
\begin{tikzpicture}[baseline=(current bounding box.center)]

         \node (1)          at    (-5.7,0) {$1$};
         \node (YX)       at   (0,0)  {$Y^X$};
         \node (YX0)    at   (2.8,0)   {$Y^{X_0}$};
         \node (Rp)      at   (2.8,-3)  {$\prod_{i=1}^n Y_i \cong \Rn$};

         \draw[->,above] (YX) to node {$\delta_{Y^{\iota}}$} (YX0);
         \draw[->,right] (YX0) to node {$\delta_{ev_{\xv_1} \times \ldots \times ev_{\xv_n}}\mid _{Y^{X_0}}$} (Rp);
         \draw[->,left] (YX) to node {$\delta_{ev_{\xv_1} \times \ldots \times ev_{\xv_n}}$} (Rp);
         \draw[->,above] (1) to node {$\mcII_n(\bullet \mid  y) \sim \GP(i_{\mv}^1,\kappa^1)$} (YX);
         \draw[->,below,left] (1) to node [xshift=-3pt,yshift=-9pt] {$\mcI_p(\bullet \mid  y)  \sim \NN( (i_{\mv}^1(\xv_1),\ldots,i_{\mv}^1(\xv_n))^T, \kappa^1\mid _{X_0})$} (Rp);
         
	 \end{tikzpicture}
\]
where $Y_i$ is a copy of $Y =\mathbb{R}$ and the restriction $\delta_{ev_{\xv_1} \times \ldots \times ev_{\xv_n}}\mid _{Y^{X_0}}$ is an isomorphism.
The inference map $\mcI_p(\bullet \mid  y)$ is the updated normal distribution on $\Rn$ given the measurement $(\xv,y)$ which can be rewritten as 
\be \nonumber
\mcI_p(\bullet \mid  y)  \sim \NN(\mv + K(X_0,\xv) \kappa(\xv,\xv)^{-1} (y - \mv^T \xv),  \kappa^1\mid _{X_0}),
\ee
where $X_0$ is now viewed as the ordered set $X_0 = (\xv_1,\ldots,\xv_n)$ and $K(X_0,\xv)$ is the $n$-vector with components $\kappa(\xv_j,\xv)$.

 Iterating this updating procedure for 
$N$ measurements $\{(\xv_i,y_i)\}_{i=1}^{N}$ the $N^{th}$ posterior  coincides with the analogous noisy measurment inference updating Equations~\ref{meanUpdate} and \ref{coUpdate} with $\kappa$ in place of $k$.

\section{Stochastic Processes as Points}   

Having defined stochastic processes  we would be remiss not to mention the Markov process---one of the most familiar type of processes used for modeling.   Many applications can be approximated by Markov models and a familiar example is the Kalman filter which we describe below as it is the archetype.  While Kalman filtering is not commonly viewed as a ML problem, it is useful to put it into perspective with respect to the Bayesian modeling paradigm.

By looking at Markov processes we are immediately led to a generalization of the definition of a stochastic process which is due to Lawvere and Meng~\cite{Meng}.    To motivate this we start with the elementary idea first before giving the generalized definition of a stochastic process.

\subsection{Markov processes via Functor Categories}  Here we assume knowledge of the definition of a functor, and refer the unfamiliar reader to any standard text on category theory.  Let $T$ be any set with a total (linear) ordering $\le$  so for every $t_1,t_2 \in T$ either $t_1 \le t_2$ or $t_2 \le t_1$. (Here we have switched from our standard  ``$X$'' notation to ``$T$'' as we wish to convey the image of a space with properties similar to time as modeled by the real line.) We can view $(T,\le)$ as  a category with the objects as the elements and the set of arrows from one object to another as
\be \nonumber
hom_T(t_1,t_2) = \left\{ \begin{array}{ll} \star \textrm{ iff }t_1 \le t_2 \\ \emptyset \textrm{ otherwise } \end{array} \right.
\ee

The functor category $\prob^T$ has as objects functors $\F: (T,\le) \rightarrow \prob$ which  play an important role in the theory of stochastic processes, and we formally give the following definition. 

\begin{definition} A \emph{Markov transformation} is a functor $\F: (T,\le) \rightarrow \prob$.  
\end{definition} 
From the modeling perspective we look at the image of the functor $\F \in_{ob} \prob^T$ in the category $\prob$ so given any sequence of ordered points $\{t_i\}_{i=1}^{\infty}$ in $T$ their image under $\F$ is shown in Figure~\ref{fig:MMFunctor}, where $ \F_{t_i,t_{i+1}} = \F( \le )$ is a $\prob$ arrow.

 \begin{figure}[H]
\begin{center}
 \begin{tikzpicture}[baseline=(current bounding box.center)]
	\node	(x1)	at	(0,0)	                  {$\F(t_1)$};
	\node         (x2)   at    (3,0)           {$\F(t_2)$};
	\node          (x3)  at    (6,0)           {$\F(t_3)$};
	\node         (dots)   at    (9,0)           {$\ldots$};
	
	\draw[->, above] (x1) to node  {$\F_{t_1,t_2}$} (x2);
         \draw[->, above] (x2) to node {$\F_{t_2,t_3}$} (x3);
         \draw[->, above] (x3) to node {$\F_{t_3,t_4}$} (dots);     
          
 \end{tikzpicture}
\end{center}
\caption{A Markov transformation as the image of a $\prob$ valued  Functor.}
\label{fig:MMFunctor}
\end{figure}

 By functoriality, these arrows satisfy the conditions
\begin{enumerate}
\item $\F_{t_i,t_i} = id_{t_i}$, and
\item $\F_{t_i,t_{i+2}} = \F_{t_{i+1},t_{i+2}} \circ \F_{t_i, t_{i+1}}$
\end{enumerate}
Using the definition of composition in $\prob$ the second condition can be rewritten as 
\be \nonumber
\F_{t_i,t_{i+2}}(B \mid  x) = \int_{u \in \F(t_{i+1})} \F_{t_{i+1},t_{i+2}}(B \mid  u)  \,  d\F_{t_i, t_{i+1}}(\cdot \mid  x)
\ee
for $x \in \F(t_i)$ (the ``state'' of the process at time $t_i$) and $B \in \sa_{F(t_{i+2})}$.  This equation is  called the Chapman-Kolomogorov relation and can be used, in the non categorical characterization, to define a Markov process.  

The important aspect to note about this definition of a Markov model is that the measurable spaces $\F(t_i)$ can be distinct from the other measurable spaces $\F(t_j)$, for $j \ne i$, and of course the arrows $\F_{t_i,t_{i+1}}$ are in general distinct.    This simple  definition of a Markov transformation as a functor captures the property of an evolving process being ``memoryless'' since if we know where the process $\F$ is at $t_i$, say $x \in \F(t_i)$, then its expectation at $t_{i+1}$ (as well as higher order moments) can be determined without regard to its ``state'' prior to $t_i$.

The arrows of the functor category $\prob^T$ are natural transformations $\eta: \F \rightarrow \G$, for $\F, \G \in_{ob} \prob^T$, and hence satisfy the commutativity relation given in Figure~\ref{fig3} for every $t_1,t_2 \in T$ with $t_1 \le t_2$.

 \begin{figure}[H]
\begin{center}
 \begin{tikzpicture}[baseline=(current bounding box.center)]
	\node	(x1)	at	(0,0)	                  {$\F(t_1)$};
	\node         (x2)   at    (0,-3)           {$\F(t_2)$};

	\node	(y1)	at	(3,0)	            {$\G(t_1)$};
	\node         (y2)   at    (3,-3)           {$\G(t_2)$};
	
	\draw[->, left] (x1) to node  {$\F_{t_1,t_2}$} (x2);
         \draw[->, above] (x1) to node {$\eta_{t_1}$} (y1);
         \draw[->, above] (x2) to node {$\eta_{t_2}$} (y2); 
         	\draw[->, right] (y1) to node  {$\G_{t_1,t_2}$} (y2);
              
 \end{tikzpicture}
\end{center}
\caption{An arrow in $\prob^T$ is a natural transformation.}
\label{fig3}
\end{figure}

The functor category $\prob^T$ has a terminal object $\mathbf{1}$ mapping $t \mapsto 1$ for every $t \in T$ and this object $\mathbf{1} \in_{ob} \prob^T$ allows us to generalize the definition of a stochastic process.\footnote{The elementary definition of a stochastic process, Definition~\ref{stochasticDef}, as a probability measure on a function space suffices for what we might call standard ML. For more general constructions, such as Markov Models and Hierarchical Hidden Markov Models (HHMM) the generalized definition is required. } 

\begin{definition}  \label{generalizedStoch}  Let $X$ be \emph{any category}.  A \emph{stochastic process} is a point in the category $\prob^X$,  {\em i.e.}, a $\prob^X$ arrow $\eta: \mathbf{1} \rightarrow \F$ for some $\F \in_{ob} \prob^X$.\footnote{In \emph{any category}  with a terminal object $1$ an arrow whose domain is $1$ is called a point.  So an arrow $x :1 \rightarrow X$ is called  a point of $X$ whereas $f: X  \rightarrow Y$ is sometimes referred to as a  generalized element to emphasize that it ``varies''  over the domain.  It is constructive to consider what this means in the category of Sets and why the terminology is meaningful.}
\end{definition}

Different categories $X$ correspond to different types of stochastic processes.  Taking the simplest possible case let $X$ be a set considered as a discrete category---the objects are the elements $x \in X$ while there are no nonidentity arrows in $X$ viewed as a category.  This case generalizes Definition~\ref{stochasticDef}  because, for $Y$ a fixed measurable space we have the functor $\hat{Y}: X \rightarrow \prob$ mapping each object $x \in_{ob} X$ to a copy $Y_x$ of $Y$ and this special case corresponds to Definition~\ref{stochasticDef}.

Taking  $X= T$, where $T$ is a totally ordered set (and subsequently viewed as a category with one arrow between any two elements), 
and looking at the image of 
 \begin{figure}[H]
\begin{center}
 \begin{tikzpicture}[baseline=(current bounding box.center)]

	\node	(x1)	at	(0,0)	                  {$t_1$};
	\node         (x2)   at    (3,0)           {$t_2$};
	\node          (x3)  at    (6,0)           {$t_3$};
	\node         (dots)   at    (9,0)           {$\ldots$};
	
	\draw[->, above] (x1) to node  {$\le$} (x2);
         \draw[->, above] (x2) to node {$\le$} (x3);
         \draw[->, above] (x3) to node {$\le$} (dots);     
                   
 \end{tikzpicture}
\end{center}
\end{figure}
\noindent
under the stochastic process $\mu: \mathbf{1} \rightarrow \F$ gives the commutative diagram in Figure~\ref{fig4}.
 \begin{figure}[H]
\begin{center}
 \begin{tikzpicture}[baseline=(current bounding box.center)]
         \node         (1)     at      (4.5, 3)        {$1$};
	\node	(x1)	at	(0,0)	                  {$\F(t_1)$};
	\node         (x2)   at    (3,0)           {$\F(t_2)$};
	\node          (x3)  at    (6,0)           {$\F(t_3)$};
	\node         (dots)   at    (9,0)           {$\ldots$};
	
	\draw[->, above] (x1) to node  {$\F_{t_1,t_2}$} (x2);
         \draw[->, above] (x2) to node {$\F_{t_2,t_3}$} (x3);
         \draw[->, above] (x3) to node {$\F_{t_3,t_4}$} (dots);     
         
         \draw[->,left] (1) to node {$\mu_{t_1}$} (x1);
         \draw[->,left] (1) to node {$\mu_{t_2}$} (x2);         
         \draw[->,left] (1) to node {$\mu_{t_3}$} (x3);
         \draw[->,left] (1) to node {$\mu_{t_4}$} (dots);
                   
 \end{tikzpicture}
\end{center}
\caption{A Markov model as the image of a stochastic process.}
\label{fig4}
\end{figure}
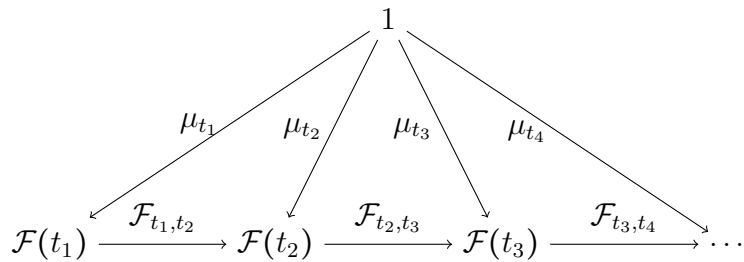
\noindent
From this perspective a stochastic process $\mu$ can be viewed as a family of probability measures on the measurable spaces $\F(t_i)$, and the stochastic process $\mu$ 
 coupled with a $\prob^T$ arrow $\eta: \F \rightarrow \G$ maps one Markov model to another
 \begin{figure}[H]
\begin{center}
 \begin{tikzpicture}[baseline=(current bounding box.center)]
         \node         (1)     at      (4.5, 3)        {$1$};
	\node	(x1)	at	(0,0)	                  {$\F(t_1)$};
	\node         (x2)   at    (3,0)           {$\F(t_2)$};
	\node          (x3)  at    (6,0)           {$\F(t_3)$};
	\node         (dots)   at    (9,0)           {$\ldots$};

	\node	(y1)	at	(0,-3)	                  {$\G(t_1)$};
	\node         (y2)   at    (3,-3)           {$\G(t_2)$};
	\node          (y3)  at    (6,-3)           {$\G(t_3)$};
	\node         (dots2)   at    (9,-3)           {$\ldots$};
	
	\draw[->, above] (x1) to node  {$\F_{t_1,t_2}$} (x2);
         \draw[->, above] (x2) to node {$\F_{t_2,t_3}$} (x3);
         \draw[->, above] (x3) to node {$\F_{t_3,t_4}$} (dots);     
         
         	\draw[->, above] (y1) to node  {$\G_{t_1,t_2}$} (y2);
         \draw[->, above] (y2) to node {$\G_{t_2,t_3}$} (y3);
         \draw[->, above] (y3) to node {$\G_{t_3,t_4}$} (dots2);     
         
         \draw[->,left] (1) to node {$\mu_{t_1}$} (x1);
         \draw[->,left] (1) to node {$\mu_{t_2}$} (x2);         
         \draw[->,left] (1) to node {$\mu_{t_3}$} (x3);
         \draw[->,left] (1) to node {$\mu_{t_4}$} (dots);
                   
         \draw[->, left] (x1) to node  {$\eta_{t_1,t_2}$} (y1);
         \draw[->, left] (x2) to node {$\eta_{t_2,t_3}$} (y2);
         \draw[->, left] (x3) to node {$\eta_{t_3,t_4}$} (y3);     

 \end{tikzpicture}
\end{center}
\end{figure}

One can also observe that GPs can be defined using this generalized definition of a stochastic process.  For $X$ a measurable space it follows for any finite subset $X_0 \subset X$ we have the inclusion map $\iota: X_0 \hookrightarrow X$ which is a measurable function, using the subspace $\sigma$-algbra for $X_0$, and we are led back to Diagram~\ref{fig:GPdef} with the stochastic process $P: \textbf{1} \rightarrow \hat{Y}$, where $\hat{Y}$ is as defined in the paragraph above following Definition~\ref{generalizedStoch}, which satisfies the appropriate restriction property defining a GP.

These simple examples illustrate that different stochastic processes can be obtained by either varying the structure of the category $X$ and/or by placing additional requirements on the projection maps, e.g., requiring the projections be normal distributions on finite subspaces of the exponent category $X$.

\subsection{Hidden Markov Models}  
To bring in the Bayesian aspect of Markov models it is necessary to consider the measurement process associated with a sequence as in Figure~\ref{fig4}.  In particular, consider the standard diagram
 \begin{figure}[H]
\begin{center}
 \begin{tikzpicture}[baseline=(current bounding box.center)]
 	\node	(1)	at	(0,0)		         {$1$};
	\node         (X)   at    (-1.5,-2)           {$\F(t_1)$};
	\node         (Y)   at   (1.5, -2)              {$Y_{t_1}$};
         
	\draw[->, left] (1) to node  {$\mu_{t_1}$} (X);
         \draw[->, above] (X) to node {$\mcS_{t_1}$} (Y);
         \draw[->, dashed,right] (1) to node {$d_{t_1}$} (Y);
 \end{tikzpicture}
\end{center}
\end{figure}
\noindent
which characterizes a Bayesian model, where $Y_{t_1}$ is a copy of a $Y$ which is a data measurement space, $\mcS_{t_1}$ is interpreted as a measurement model and  $d_{t_1}$ is an actual data measurement on the ``state'' space $\F(t_1)$.  This determines an inference map $\mcI_{t_1}$ so that given a measurement $d_{t_1}$ the posterior probability on $\F(t_1)$ is $\mcI_{t_1} \circ d_{t_1}$.  Putting the two measurement models together with the Markov transformation model $\F$  we obtain the following diagram in Figure~\ref{fig:HMM}.
 \begin{figure}[H]
\begin{center}
 \begin{tikzpicture}
  	\node	(1)	at	(0,2)		         {$1$};
	\node         (X1)   at    (-2,0)           {$\F(t_1)$};
	\node         (X2)   at    (2,0)           {$\F(t_2)$};
	
	\node         (Y1)   at    (-2,-3)           {$Y_{t_1}$};
	\node         (Y2)   at    (2,-3)           {$Y_{t_2}$};
	
	\draw[->, left,above] (1) to node [xshift=-2pt]  {$\mu_{t_1}$} (X1);
         \draw[->, above] (X1) to node {$\F_{t_1,t_2}$} (X2);

         \draw[->, left] ([xshift=-2pt] X1.south) to node [yshift=8pt] {$\mcS_{t_1}$} ([xshift=-2pt] Y1.north);
         \draw[->, right] ([xshift=2pt] Y1.north) to node [yshift=8pt] {$\mcI_{t_1}$} ([xshift=2pt] X1.south);
         \draw[->,right,dashed] (1) to node [yshift = -9pt] {$d_{t_1}$} (Y1);
         \draw[->,above,dashed,out = -180,in=90,looseness=1] (1) to node [xshift=-25pt] {$\hat{\mu}_{t_1} =  \mcI \circ d_{t_1}$} (X1);
         \draw[->, right] (1) to node {$\F_{t_1,t_2} \circ \hat{\mu}_{t_1}$} (X2);
         
         \draw[->, left] ([xshift=-2pt] X2.south) to node  [yshift=8pt] {$\mcS_{t_2}$} ([xshift=-2pt] Y2.north);
         \draw[->, right] ([xshift=2pt] Y2.north) to node  [yshift=8pt] {$\mcI_{t_2}$} ([xshift=2pt] X2.south);
         
 \end{tikzpicture}
\end{center}
\caption{The hidden Markov model viewed in $\prob$.}
\label{fig:HMM}
\end{figure}

This is the hidden Markov process in which given a prior probability $\mu_{t_1}$ on the space $\F_{t_1}$ we can use the measurement $d_{t_1}$ to update the prior to the posterior $\hat{\mu}_{t_1} = \mcI_{t_1} \circ d_{t_1}$ on $\F(t_1)$. The posterior then composes with $\F_{t_1,t_2}$ to give the prior $\F_{t_1,t_2} \circ \hat{\mu}_{t_1}$ on $\F(t_2)$, and now the process can be repeated indefinitely.  The Kalman filter is an example  in which the Markov map $\F_{t_1,t_2}$ describe the linear dynamics of some system under consideration (as in tracking a satellite), while the sampling distributions $\mcS_{t_1}$ model the noisy measurement process which for the Kalman filter is Gaussian additive noise. Of course one can easily replace the linear dynamic by a nonlinear dynamic and the Gaussian additive noise model by any other measurement model, obtaining an extended Kalman filter, and the above form of the diagram does not change at all, only the $\prob$ maps change.

\section{Final Remarks}  
In closing, we would like to make a few comments on the use of category theory for ML, where the largest potential payoff lies in exploiting the abstract framework that categorical language provides.   This section assumes a basic familiarity with monads and should be viewed as only providing conceptual directions for future research which we believe are relevant for the mathematical development of learning systems.  Further details on the theory of monads can be found in most category theory books, while the basics as they relate to our discussion below can be found in our previous paper~\cite{Culbertson}, in which we provide the simplest possible example of a decision rule on a discrete space.

Seemingly all aspects of ML including Dirichlet distributions and unsupervised learning (clustering)  can  be characterized using the category $\prob$.   As an elementary example, mixture models can be developed by consideration of the space of all (perfect) probability measures $\mathscr{P}X$  on a measurable space $X$ endowed with the coarsest $\sigma$-algebra such that the evaluation maps $ev_{B}: \mathscr PX \to [0,1]$ given by $ev_{B}(P) = P(B)$, for all $B \in \sa_X$,  are measurable.  This actually defines the object mapping of  a functor $\mathscr P : \prob \to \M$ which sends a measurable space $X$ to the space $\mathscr PX$ of probability measures on $X$. On arrows, $\mathscr P$ sends the $\prob$-arrow $f: X \to Y$ to the measurable function $\mathscr Pf: \mathscr PX \to \mathscr PY$ defined pointwise on $\sa_{Y}$ by
\be \nonumber
	\mathscr Pf(P)(B) = \int_{X} f_{B} \, dP.
\ee
This functor is called the Giry monad, denoted $\G$, and the Kleisli category $K(\G)$ of the Giry monad is equivalent to $\prob$.\footnote{See Giry\cite{Giry} for the basic definitions and equivalence of these categories.}  The reason we have chosen to present the material from the perspective of $\prob$ rather that $K(\G)$ is that the existing literature on ML uses Markov kernels rather than the equivalent arrows in $K(\G)$.
The Giry monad  determines the nondeterministic $\prob$ mapping
\begin{equation}     \nonumber
 \begin{tikzpicture}[baseline=(current bounding box.center)]

         \node  (X)  at  (4,0)   {$X$};
         \node (X2) at   (2,0)  {$\px X$};

	\draw[->,below] (X2) to node {$\varepsilon_X$} (X);
	 \end{tikzpicture}
 \end{equation}
\noindent
given by
 $\varepsilon_X(P,B) = ev_B(P) = P(B)$ for all $P \in \px(X)$ and all $B \in \sa_X$.   Using this construction, any probability measure $P$ on $\px X$ then yields a mixture of probability measures on $X$ through the composite map 
\begin{equation}     \nonumber
 \begin{tikzpicture}[baseline=(current bounding box.center)]
         \node  (1)  at  (0,0)   {$1$};
         \node  (X)  at  (2,-2)   {$X$};
         \node (X2) at   (-2,-2)  {$\px X$};
         \draw[->,left] (1) to node {$P$} (X2);
	\draw[->,below] (X2) to node {$\varepsilon_X$} (X);
	\draw[->,dashed,right] (1) to node {$\varepsilon_X \circ P=$ \,  A mixture model.} (X);
	 \end{tikzpicture}
 \end{equation}

We have briefly introduced the Kleisli category $K(\G) \, (\cong \prob)$ because it is a subcategory $\mathcal D$ of the Eilenberg--Moore category of $\G$-algebras, which we call the  category of decision rules,\footnote{Doberkat~\cite{Doberkat} has analyzed the Kleisli category under the condition that the arrows are not only measurable but also continuous.  This is an unnecessary assumption, resulting in all finite spaces having no decision rules,  though his considerable work on this category $K(\G)$ provides much useful insight as well as applications of  this category. } because the objects of this category are $\M$ arrows $r: \px X \rightarrow X$ sending a probability measure $P$ on $X$ to an actual element of $X$ satisfying some basic properties including $r(\delta_x) = x$.  Thus $r$ acts as a decision rule converting a probability measure on $X$ to an actual element of $X$ and, if $P$ is deterministic, takes that measure to the point $x \in X$ of nonzero measure.\footnote{Measurable spaces are defined only up to isomorphism, so that if two elements $x, y \in X$ are nondistinguishable in terms of the $\sigma$-algebra, meaning there exist no measurable set $A \in \sa_X$ such that $x \in A$ and $y \not \in A$, then $\delta_x = \delta_y$ and we also identify $x$ with $y$.}  Decision theory is generally presented from the perspective of taking probability measures on $X$ and, usually via a family of loss functions $\theta:X \rightarrow \mathbb{R}$,  making a selection among a family of possible choices $\theta \in \Theta$ where $\Theta$ is some measurable space rather than $X$. However,  it can clearly be viewed from this more basic viewpoint. 

The largest potential payoff in using category theory for ML and related applications appears to  be in integrating decision theory with probability theory, expressed in terms of the category $\mathcal{D}$, which would provide a basis for an automated reasoning system.  While the Bayesian framework presented in this paper can fruitfully be exploited to construct estimation of unknown functions it still lacks the ability to \emph{make decisions} of any kind.  Even if we were to invoke a list of simple rules to make decisions the category $\prob$ is too restrictive to implement these rules.  By working in the larger category of decision rules $\mathcal{D}$, it is possible to implement both the Bayesian reasoning presented in this work as well as decision rules as part of larger  reasoning system.  Our perspective on this problem is that Bayesian reasoning in general is inadequate---not only because it lacks the ability to make decisions---but because it is a \emph{passive} system which ``waits around'' for additional measurement data.  An automated reasoning system must take self directed action as in commanding itself to ``swivel the camera $45$ degrees right to obtain necessary additional information'', which is a (decision) command and control component which can be integrated with Bayesian reasoning.  An intelligent system would in addition, based upon the work of Rosen~\cite{Rosen},  in which he employed categorical ideas,  possess an anticipatory component.  While he did not use the language of SMCC it is clear this aspect was his intention and critical  in his method of modeling intelligent systems, and within the category $\mathcal{D}$ this additional aspect can also be modeled.

\section{Appendix A: Integrals over probability measures. }

The following three properties are the only three properties used throughout the paper to derive the values of integrals defined over probability measures.  
\begin{enumerate}
\item The integral of any measurable function $f:X \rightarrow \mathbb{R}$ with respect to a dirac measure satisfies
\be \nonumber
\int_{u \in X} f(u) \, d\delta_{x} = f(x).
\ee
This is straightforward to show using standard measure theoretic arguments.

\item Integration with respect to a push forward measure can be pulled back.  Suppose $f: X \rightarrow Y$ is any measurable function, $P$ is a probability measure on $X$, and $\phi:Y \rightarrow \mathbb{R}$ is any measurable function.  Then
\be \nonumber
 \int_{y \in Y} \phi(y) \, d(Pf^{-1}) = \int_{x \in X} \phi(f(x))  \, dP 
\ee
To prove this simply show that it holds for $\phi = \ch_B$, the characteristic function at $B$, then extend it to any simple function, and finally use the monotone convergence theorem to show it holds for any measurable function.

\item Suppose $f: X \rightarrow Y$ is any measurable function and $P$ is a probability measure on $X$. Then 
\be \nonumber
\int_{x \in X} \delta_{f}(B \mid x) \, dP = \int_{x \in X} \ch_{B}(f(x)) \, dP = P(f^{-1}(B))
\ee
This is a special case of case (2) with $\phi=\ch_B$.
\end{enumerate}

\section{Appendix B:  The weak closed structure in $\prob$}

Here is a simple illustration of the weak closed property of $\prob$ using finite spaces.
Let $X=2 = \{0,1\}$ and $Y=\{a,b,c\}$, both with the powerset $\sigma$-algebra.  This yields the powerset $\sigma$-algebra on $Y^X$ and each function can be represented by an ordered pair, such as $(b,c)$ denoting the function $f(1)=b$ and $f(2)=c$.   Define two probability measures $P,Q$ on $Y^X$ by
\be   \nonumber
\begin{array}{lcccl}
P( \{(b,c) \}) &=& .5 &=& P(\{(c,b)\}) \\
Q( \{(b,b)\}) &=& .5 &=& Q(\{(c,c)\})
\end{array}
\ee
and both measures having a value of $0$ on all other singleton measurable sets.  Both of these probability measures on $Y^X$ yield the same conditional probability measure

\begin{figure}[H]
\begin{equation} \nonumber
 \begin{tikzpicture}[baseline=(current bounding box.center)]

         \node (X) at  (0,0)  {$(X, \sa_X)$};
         \node (Y)  at  (4,0)   {$(Y, \sa_Y)$};

	\draw[->,above] (X) to node {$\overline{P}=\overline{Q}$} (Y);

 \end{tikzpicture}
 \end{equation}
\end{figure}
\noindent
since 
\be  \nonumber
\begin{array}{lcccl}
\overline{P}(\{a\} | 1) &=& 0 &=& \overline{Q}(\{a\} | 1) \\
\overline{P}(\{b\} | 1) &=& .5 &=& \overline{Q}(\{b\} | 1) \\
\overline{P}(\{c\} | 1) &=& .5 &=& \overline{Q}(\{c\} | 1)
\end{array}
\ee
and
\be   \nonumber
\begin{array}{lcccl}
\overline{P}(\{a\} | 2) &=& 0 &=& \overline{Q}(\{a\} | 2) \\
\overline{P}(\{b\} | 2) &=& .5 &=& \overline{Q}(\{b\} | 2) \\
\overline{P}(\{c\} | 2) &=& .5 &=& \overline{Q}(\{c\} | 2)
\end{array}
\ee
Since $P \ne Q$ the uniqueness condition required for the closedness property fails and only the existence condition  is satisfied.

\let\Section\section 
\def\section*#1{\Section{#1}} 

 \bibliographystyle{plain}

\vspace{.3in}
\begin{tabbing}
Jared Culbertson \hspace{2.7in}     \= Kirk Sturtz \\
RYAT, Sensors Directorate \> Universal Mathematics \\
Air Force Research Laboratory, WPAFB \> Vandalia, OH 45377 \\
Dayton, OH 45433 \> \email{kirksturtz@UniversalMath.com} \\
\email{jared.culbertson@us.af.mil}    \\
\end{tabbing}

\end{document}